%%%%%%%%%%%%%%%%%%%%%%%%%%%%%%%%%%%%%%%%%%%%%%%%%%%%%%%%%%%%%%%%%%%%%%%%%%%%%%%%%%%%%%%%%%%%%
\documentclass[10pt,reqno]{amsart} % please use amsart at 11pt

\usepackage{amssymb,latexsym}
\usepackage{cite} % to get Refs. [1,2,3] typeset as [1--3] automatically

\usepackage[height=190mm,width=130mm]{geometry} % this is the journal
                                % text area size

%\usepackage{showkeys}

\theoremstyle{plain}

\newtheorem{lemma}{Lemma}
\newtheorem{corollary}{Corollary}
\newtheorem{proposition}{Proposition}

\theoremstyle{definition}
\newtheorem{definition}{Definition}

\theoremstyle{remark}
\newtheorem{remark}{Remark}

\newcommand{\Z}{\mathbb{Z}}
\newcommand{\C}{\mathbb{C}}

%%

 % an example of defining a custom
                                % mathematical function

\numberwithin{equation}{section} % to get equations numbered
                                % automatically according to section
                                % number as (1.1), (1.2), etc

%%

%\newcommand{\vac}{\mathbf{1}}
%\newcommand{\C}{\mathbb{C}}
%\newcommand{\Z}{\mathbb{Z}}

%%%%%%%%%%%%%%%%%%%%%%%%%%%%%%%%%%%%%%%%%%%%%%%%%%%%%%%%%%%%%%%%%%%%%%5
%%  

%\newcommand{\be}{\ensuremath{\beta}}

%\newcommand{\Ka}{\ensuremath{\mathcal{K}}}
%\newcommand{\Ha}{\ensuremath{\mathcal{H}}}

\newcommand{\N}{\ensuremath{\mathbb {N}}}

\newcommand{\F}{\ensuremath{\mathcal{F}}}

\newcommand{\g}{\ensuremath{\Gamma}}

\newcommand{\ps}{{\raise 1pt\hbox{\tiny (}}}

\newcommand{\pss}{{\raise 1pt\hbox{\tiny [}}}
\newcommand{\pdd}{{\raise 1pt\hbox{\tiny ]}}}
\newcommand{\pd}{{\raise 1pt\hbox{\tiny )}}}

\newcommand{\bs}{{\raise 1pt\hbox{\tiny [}}}
\newcommand{\bd}{{\raise 1pt\hbox{\tiny ]}}}

\def\cross{\mathinner{\mathrel{\raise0.8pt\hbox{$\scriptstyle>$}}
                 \joinrel\mathrel\triangleleft}}

%\newcommand{\Z}{\ensuremath{\mathcal{Z}}}

%%%%%%%%%%%%%%%%%%%%%%%%%%%%%%%%%%%%%%%%%%%%%%%%%%%%%%%%%%%%%%%%%%
%  

%\usepackage{cyrcm1}
\usepackage{citehack}
\usepackage{amsmath,amsthm}
\usepackage{amsfonts}
\usepackage{amssymb}
\usepackage{longtable}
\usepackage[matrix,arrow,curve]{xy}

\usepackage{lipsum}

\def\W{\mathcal{W}}
\def\K{\mathcal{K}}

\usepackage{stackrel}

\newcommand{\be}{\begin{equation}}
\newcommand{\ee}{\end{equation}}

%%%%%%%%%%%%%%%%%%%%%%%%%%%%%%%%%%%%%%%%%%%%%%%%%%%%%%%%%%%%%%%%%%%55
%% 

\newcommand{\nn}{\nonumber \\}

 \newcommand{\res}{\mbox{\rm Res}}

\renewcommand{\hom}{\mbox{\rm Hom}}

\newcommand{\wt}{\mbox{\rm wt}\ }

\newcommand{\one}{\mathbf{1}}

%\newcommand{\mod}{\;\;\mbox{\rm mod}\;}

%%%%%%%%%%%%%%%%%%%%%%%%%%%%%%%%%%%%%%%%%%%%%%%%%%%%%%%%%%%%%%%%%

%\newcommand{\eqref}[1]{(\ref{#1})}
\newcommand{\nc}{\newcommand}
\nc{\cali}{\mathcal}
\nc{\on}{\operatorname}
\nc{\Wick}{{\mb :}}

\nc{\ddz}{\frac{\partial}{\partial z}}
\nc{\ch}{\mbox{ch}}
%\nc{\Z}{{\mbb Z}}
%\nc{\C}{{\mbb C}}
\nc{\Oo}{{\cali O}}
%\nc{\D}{{\cali D}}
\nc{\cond}{|\,}
\nc{\bib}{\bibitem}
\nc{\pone}{\Pro^1}
\nc{\pa}{\partial}
%\nc{\F}{{\cali F}}
%\nc{\K}{{\cali K}}
\nc{\arr}{\rightarrow}
\nc{\larr}{\longrightarrow}
%\nc{\al}{\alpha}
\nc{\ket}{\rangle}
\nc{\bra}{\langle}
%%\nc{\W}{{\cali W}}
\nc{\gam}{\bar{\gamma}}
%\nc{\Q}{\bar{Q}}
\nc{\q}{\widetilde{Q}}
%%\nc{\la}{\lambda}
\nc{\ep}{\epsilon}
\nc{\su}{\widehat{{\mf s}{\mf l}}_2}
\nc{\sw}{{\mf s}{\mf l}}
%\nc{\g}{{\mf g}}
\nc{\h}{{\mf h}}
\nc{\n}{{\mf n}}
\nc{\ab}{\mf{a}}
%\nc{\f}{\widehat{{\cali F}}}
\nc{\is}{{\mb i}}
%%\nc{\V}{\cali{V}}
%%\nc{\M}{\widetilde{M}}
\nc{\js}{{\mb j}}
\nc{\bi}{\bibitem}
\nc{\He}{{\cali H}}
\nc{\inv}{^{-1}}
%\nc{\vac}{|0\rangle}
\nc{\ol}{\overline}
%\nc{\wt}{\widetilde}
\nc{\wh}{\widehat}
\nc{\dst}{\displaystyle}

\nc{\delt}{\partial_t}
\nc{\ddt}{\frac{\partial}{\partial t}}
\nc{\delx}{\partial_x}
\nc{\mb}{\mathbf}
\nc{\mf}{\mathfrak}

\nc{\mbb}{\mathbb}
\nc{\Ctt}{\C((t))}
\nc{\Ct}{\C[t,t\inv]}

\nc{\ghat}{\wh{\g}}

\nc{\un}{\underline}
\nc{\mc}{\mathcal}
\nc{\BB}{{\mc B}}
\nc{\bb}{{\mf b}}
\nc{\kk}{{\mf k}}
%\nc{\Y}{{\mc Y}}
\nc{\frob}{\times}
\nc{\sm}{\setminus}
%\nc{\bs}{\backslash}
\nc{\Pp}{{\mathbb P}^1}
\nc{\Aa}{{\mc A}}

\nc{\AutO}{\on{Aut}\Oo}
\nc{\AUTO}{\un{\on{Aut}}\Oo}
\nc{\AUTK}{\un{\on{Aut}}\K}
%%%%%
%\nc{\Mcal}{{\mc M}}
\nc{\Heout}{\He_{\out}}
\nc{\Hetil}{{\widetilde\He}}
\nc{\wb}{\overline}

\nc{\Res}{\on{Res}}
\nc{\pitil}{\Pi}
\nc{\Ctil}{\wt{C}}
\nc{\auto}{\on{Aut} \Oo}
\nc{\phitil}{\wt{\phi}}
\nc{\gz}{\g_{\vec z}}
\nc{\tensorM}{\bigotimes_{i=1}^N{\mathbb M}_i}
\nc{\tensorW}{\bigotimes_{i=1}^N W_{\nu_i,k}}
%\nc{\B}{{\mc B}}
\nc{\out}{\on{out}}

%\nc{\E}{{\mc E}}
\nc{\m}{{\mathfrak m}}

%%%%%%

\nc{\gx}{\g^0_{\vec x}}

\nc{\hx}{\He^0_{\vec x}}
\nc{\tensorpi}{\pi_{\nu_1,\ldots,\nu_N}^\kappa}
%\nc{\CN}{{\mathcal C}_N}
%\nc{\Cn}{{\mathcal C}'_N}
\nc{\Phizw}{\Phi_{\vec w}({\vec z})}
\nc{\Pro}{{\mathbb P}}

%\nc{\G}{\wh{\g}}
\nc{\De}{\Delta}

\nc{\us}{\underset}
%%%%

\nc{\Ll}{\mc L}
\nc{\dR}{\on{dR}}

%\nc{\ds}{\displaystyle}
\nc{\T}{{\mc T}}

\nc{\Xn}{\overset{\circ}X{}^n} \nc{\Dn}{\overset{\circ}D{}^n}
\nc{\Dxn}{\overset{\circ}D{}^n_x} \nc{\varphitil}{\wt{\varphi}}

\nc{\lf}{{\mf l}}
\nc{\GL}{{}^L G}
\nc{\Vir}{\on{Vir}}

%%%%%%%%%%%%%%%%%%%%%%%%%%%%%%%%%%%%%%%%%%%%%%%%%%%%%%%%%%%%%%%%%%
\begin{document}

\title[The simplest cohomological invariants for vertex algebras]  
{The simplest cohomological invariants for vertex algebras} 
%%
 % please provide
                                % an abbreviated title 
%%\newcommand{\half}{\frac{1}{2}}
%%%%%%%%%%%%%%%%%%%%%%%%%%%%%%%%%%%%%%%%%%%%%%%%%%%%%%%%%%%%%%%%%%%%%%%%%%%%%%%
\author{A. Zuevsky} 
\address{Institute of Mathematics \\ Czech Academy of Sciences\\ \\ Zitna 25, 11567 \\ Prague\\ Czech Republic}

\email{zuevsky@yahoo.com}

%%%%%%%%%%%%%%%%%%%%%%%%%%%%%%%%%%%%%%%%%%%%%%%%%%%%%%%%%%%%%%%%%%%%%%%%%
% You may repeat \author \address as often as necessary                 %
%%%%%%%%%%%%%%%%%%%%%%%%%%%%%%%%%%%%%%%%%%%%%%%%%%%%%%%%%%%%%%%%%%%%%%%%%
\begin{abstract}
For the double complex structure of grading-restricted vertex algebra cohomology defined in \cite{Huang}, 
we introduce a multiplication of elements of double complex spaces. %, and prove its properties. 
We show that the orthogonality and bi-grading conditions applied on double complex spaces, provide in relation 
among mappings and actions of co-boundary operators. 
Thus, we endow the double complex spaces with structure of bi-graded differential algebra. 
We then introduce the simples cohomology classes for a grading-restricted vertex algebra, and 
 show their independence on the choice of mappings from double complex spaces. 
We prove that its cohomology class does not depend on mappings representing of 
the double complex spaces.  
Finally, we show that the orthogonality relations together with the bi-grading condition 
bring about generators and commutation relations for a continual Lie algebra. % \cite{saver}. 
AMS Classification: 53C12, 57R20, 17B69 
\end{abstract}

\keywords{Vertex algebras, cohomological invariants, cohomology classes}
\vskip12pt  % insert '\vskip12pt' while using '\twocolumn' command
%\vskip28pt % if there is no keywords

\maketitle
%%%%%%%%%%%%%%%%%%%%%%%%%%%%%%%%%%%%%%%%%%%%%%%%%%%%%%%%%%%%%%%%%%%%%%%%%%%%%%%%%%%%%%%%%%%%%%%%%%%%%%%%%%%%%%
\section{Introduction: $\overline{W}$-valued rational functions}
%%%%%
\label{valued}
In \cite{Huang} the cohomology theory for a grading-restricted vertex algebra \cite{K} (see  Appendix \ref{grading})
was introduced. 
%% (see also \cite{FQ}).  
%%
The definition of double complex spaces and co-boundary operators, 
uses an interpretation of vertex algebras in terms of rational functions constructed from    
 matrix elements \cite{H2} for a grading-restricted vertex algebra. 
%%%%
The notion of composability (see Section \ref{composable}) of double complex space elements with a number of vertex operators, is essentially 
involved in the formulation. 
Then the cohomology of such complexes defines in the standard way a cohomology of a grading-restricted vertex 
algebras. 
%%%%
%%
It is an important problem to study possible cohomological classes for vertex algebras. 
In this paper we do the first steps to discover simplest cohomological
 invariants associated to the setup described above. 
For that purpose we first endow the double complex spaces with natural product, derive a counterpart of Leibniz formula 
for the action of co-boundary operators. 
Then we introduce the notion of a cohomological class for a vertex algebra. 
The orthogonality condition of double complex space is then defined. 
We show that the orthogonality being applied to the double complex spaces leads to relations among mappings and actions 
of co-boundary operators.  
The simplest non-vanishing cohomology classes for a grading-restricted vertex algebra is then derived. 
We show that such classes are independent of the choice of elements of the double complex spaces. 
Finally, we discuss occurring relations of a vertex algebra double complex relations with
 a continual Lie algebra \cite{saver}. 
For further applications of material introduced  in this paper, we would mention 
 the natural question of searching for more general cohomological invariants for a grading-restricted vertex algebra. 
Concerning possible applications, one can use the cohomological classes we derive to compute 
higher cohomologies of grading-restricted vertex algebras.   
%%

%%%%%%%%%%%%%%%%%%%%%%%%%%%%%%%%%%%%%%%%%%%%%%%%%%%%%%%%%%%%%%%%%%%%%%%%%%%%%%%%%%%%%%%%%%%%
Let $V$ be a grading-restricted 
vertex algebra, and $W$ a a grading-restricted generalized $V$-module (see Appendix \ref{grading}). 
 One defines the configuration space %are
 \cite{Huang}: 
\[
F_{n}\C=\{(z_{1}, \dots, z_{n})\in \C^{n}\;|\; z_{i}\ne z_{j}, i\ne j\},
\] 
for $n\in \Z_{+}$.
%
%%%%%%%%%%%%%%%%%%%%%%%%%%%%%%%%%%%%%%%%%%%%%%%%%%%%%%%%%%%%%%%%%%%%%%%%%%
%%%%%%%%%%%%%%%%%%%%%%%%%%%%%%%%%%%%%%%%%%%%%%%%%%%%%%%%%%%%%%%%%%%%%%%%%%
\begin{definition}
 A $\overline{W}$-valued rational function $\F$ in $(z_{1}, \dots, z_{n})$ 
with the only possible poles at 
$z_{i}=z_{j}$, $i\ne j$, 
is a map 
\begin{eqnarray*}
 \F: F_{n}\C &\to& \overline{W},   
\\
 (z_{1}, \dots, z_{n}) &\mapsto& \F(z_{1}, \dots, z_{n}),   
\end{eqnarray*} 
such that for any $w'\in W'$,  
\begin{equation}
\label{def}
\langle w', \F(z_{1}, \dots, z_{n}) \rangle,
\end{equation}
is a rational function in $(z_{1}, \dots, z_{n})$  
with the only possible poles  at 
$z_{i}=z_{j}$, $i\ne j$. 
Such map is called in what fallows $\overline{W}$-valued rational function in
$(z_{1}, \dots, z_{n})$ with possible other poles. 
Denote the space of all $\overline{W}$-valued rational functions in
$(z_{1}, \dots, z_{n})$ by $\overline{W}_{z_{1}, \dots, z_{n}}$. 
\end{definition}
%%%%%%%%%%%%%%%%%%%%%%%%%%%%%%%%%%%%%%%%%%%%%%%%%%%%%%%%%%%%%%%%%%%%%
%%
 %Here $R(.)$ denotes the following (cf. \cite{Huang}).   
%%
Namely, 
if a meromorphic function $f(z_{1}, \dots, z_{n})$ on a region in $\C^{n}$
can be analytically extended to a rational function in $(z_{1}, \dots, z_{n})$, 
then the notation 
%%
%\[
%%
$R(f(z_{1}, \dots, z_{n}))$, 
%%
%\]
%%
 is used to denote such rational function. 
Note that the set of a grading-restricted vertex algebra elements $(v_1, \ldots, v_n)$ associated with 
corresponding $(z_1, \ldots, z_n)$ play the role of non-commutative parameters for a function $\F$ in \eqref{def}. 
%%
%%%%%%%%%%%%%%%%%%%%%%%%%%%%%%%%%%%%%%%%%%%%%%%%%%%%%%%%%%%%%%%%%%%%%%%
%
%%%%%%%%%%%%%%%%%%%%%%%%%%%%%%%%%%%%%%%%%%%%%%%%%%%%%%%%%%%%%%%%%%%
%%%%%%%%%%%%%%%%%%%%%%%%%%%%%%%%%%%%%%%%%%%%%%%%%%%%%%%%%%%%%%%%%%%
%%
Let us introduce the definition of a $\W_{z_1, \ldots, z_n}$-space: 
%%
%%%%%%%%%%%%%%%%%%%%%%%%%%%%%%%%%%%%%%%%%%%%%%%%%%%%%%%%%%%%%%%%%%%%%%%
\begin{definition}
\label{wspace}
We define the space $\W_{z_1, \dots, z_n}$ of 
  $\overline{W}_{z_{1}, \dots, z_{n}}$-valued rational forms $\Phi$  
with each vertex algebra element entry $v_i$, $1 \le i \le n$
of a quasi-conformal grading-restricted vertex algebra $V$ tensored with power $\wt(v_i)$-differential of 
corresponding formal parameter $z_i$, i.e., 
\begin{eqnarray}
\label{bomba}
&& 
\Phi \left(v_{1}, z_1; \ldots;  
  v_{n}, z_n\right)
\nn
 && \qquad 
= \F \left(dz_1^{{\rm \wt}(v_1)} \otimes v_{1}, z_1; \ldots;
 dz_n^{{\rm \wt}(v_n)} \otimes  v_{n}, z_n\right) \in \W_{z_1, \dots, z_n}. 
\end{eqnarray}
where $\F \in \overline{W}_{z_1, \dots, z_n}$. 
\end{definition}
%%
%%
%%%%%%%%%%%%%%%%%%%%%%%%%%%%%%%%%%%%%%%%%%%%%%%%%%%%%%%%%%%%%%%%%%%%%%%%%%%%
%%%%%%%%%%%%%%%%%%%%%%%%%%%%%%%%%%%%%%%%%%%%%%%%%%%%%%%%%%%%%%%%%%%%%%%%%%%%
\begin{definition}
One defines an action of $S_{n}$ on the space $\hom(V^{\otimes n}, 
%\overline{
\W%}
_{z_{1}, \dots, z_{n}})$ of linear maps from 
$V^{\otimes n}$ to $%\overline{
W
%}
_{z_{1}, \dots, z_{n}}$ by 
 \begin{equation}
\label{sigmaction}
\sigma(\Phi)(v_{1}\otimes \cdots\otimes v_{n})(z_{1}, \dots, z_{n}) ,  
=\Phi(v_{\sigma(1)}\otimes \cdots\otimes v_{\sigma(n)})(z_{\sigma(1)}, \dots, z_{\sigma(n)}),
\end{equation} 
for $\sigma\in S_{n}$ and $v_{1}, \dots, v_{n}\in V$, $\Phi \in \W_{z_{1}, \dots, z_{n}}$.
We will use the notation $\sigma_{i_{1}, \dots, i_{n}}\in S_{n}$, to denote the 
the permutation given by $\sigma_{i_{1}, \dots, i_{n}}(j)=i_{j}$,  
for $j=1, \dots, n$.
\end{definition}
\begin{definition}
\label{wspace}
For $n\in \Z_{+}$,  
a linear map 
\[
\F(v_{1}, z_{1};  \ldots ; v_{n},  z_{n})
= V^{\otimes n}\to 
\W_{z_{1}, \dots, z_{n}}, 
\]
 is said to have
the  $L_V(-1)$-derivative property if
%%%
\begin{equation}
\label{lder1}
(i) \qquad %\langle w', 
\partial_{z_{i}} \F (v_{1}, z_{1};  \ldots ; v_{n},  z_{n})%\rangle
= 
%%
%\langle w',  
%%
\F(v_{1}, z_{1};  \ldots; L_{V}(-1)v_{i}, z_i; \ldots ; v_{n},  z_{n}), %\rangle, 
\end{equation}
for $i=1, \dots, n$, $(v_{1}, \dots, v_{n}) \in V$, $w'\in W$, 
and  
\begin{eqnarray}
\label{lder2}
(ii) \qquad \sum\limits_{i=1}^n\partial_{z_{i}} %\langle w', 
\F(v_{1}, z_{1};  \ldots ; v_{n},  z_{n})= %\rangle=
%%
%\langle w',
%%
 L_{W}(-1).\F(v_{1}, z_{1};  \ldots ; v_{n},  z_{n}), % \rangle, 
\end{eqnarray}
with some action $"."$ of $L_{W}(-1)$  on $\F(v_{1}, z_{1};  \ldots ; v_{n},  z_{n})$. %and 
%and  $(v_{1}, \dots, v_{n})\in V$. 
%%
\end{definition}
\begin{definition}
A linear map 
\[
\F: V^{\otimes n} \to \W_{z_{1}, \dots, z_{n}} 
\]
 has the  $L_W{(0)}$-conjugation property if for $(v_{1}, \dots, v_{n}) \in V$,
%%
%$w'\in W'$, 
%%
$(z_{1}, \dots, z_{n})\in F_{n}\C$, and $z\in \C^{\times}$, such that 
$(zz_{1}, \dots, zz_{n})\in F_{n}\C$,
\begin{eqnarray}
\label{loconj}
%%
%%\langle w',
%%
 z^{L_{W}(0)}   
\F \left(v_{1}, z_1; \ldots; v_{n}, z_{n} \right) %\rangle 
%%
%%\nn
%%
%%&&
%%
=%\langle w', 
 \F\left(z^{ L_V{(0)} } v_{1}, zz_{1};  \ldots ;  z^{L_V{(0)} } v_{n},  zz_{n}\right). %\rangle.
\end{eqnarray} 
\end{definition}
%%

%%%%%%%%%%%%%%%%%%%%%%%%%%%%%%%%%%%%%%%%%%%%%%%%%%%%%%%%%%%%%%%%%%%%%%%%%%%%%%%%%
%%%%%%%%%%%%%%%%%%%%%%%%%%%%%%%%%%%%%%%%%%%%%%%%%%%%%%%%%%%%%%%%%%%%%%%%%%%%%%%%%%%
%%
\subsection{E-elements}
 For $w\in W$, the $\overline{W}$-valued function
$E^{(n)}_{W}(v_{1}\otimes \cdots\otimes v_{n}; w)$ is 
given by 
$$
E^{(n)}_{W}(v_{1}\otimes \cdots\otimes v_{n}; w)(z_{1}, \dots, z_{n})
=E(Y_{W}(v_{1}, z_{1})\cdots Y_{W}(v_{n}, z_{n})w),  
$$
where an element $E(.)\in \overline{W}$ is 
given by 
\[
\langle w',E(.)\rangle =R(\langle w', . \rangle),
\]
 and $R(.)$ denotes the rationalization in the sense of \cite{Huang}.  
Namely, 
if a meromorphic function $f(z_{1}, \dots, z_{n})$ on a region in $\C^{n}$
can be analytically extended to a rational function in $(z_{1}, \dots, z_{n})$, 
then the notation $R(f(z_{1}, \dots, z_{n}))$ is used to denote such rational function. 
One defines  
\[
E^{W; (n)}_{WV}(w; v_{1}\otimes \cdots\otimes v_{n})
=E^{(n)}_{W}(v_{1}\otimes \cdots\otimes v_{n}; w),
\]
where 
$E^{W; (n)}_{WV}(w; v_{1}\otimes \cdots\otimes v_{n})$ is 
an element of $\overline{W}_{z_{1}, \dots, z_{n}}$.
One defines
\[
\Phi\circ \left(E^{(l_{1})}_{V;\;\one}\otimes \cdots \otimes E^{(l_{n})}_{V;\;\one}\right): 
V^{\otimes m+n}\to 
\overline{W}_{z_{1},  \dots, z_{m+n}},
\] 
by
\begin{eqnarray*}
\lefteqn{(\Phi\circ (E^{(l_{1})}_{V;\;\one}\otimes \cdots \otimes 
E^{(l_{n})}_{V;\;\one}))(v_{1}\otimes \cdots \otimes v_{m+n-1})}\nn
&&=E(\Phi(E^{(l_{1})}_{V; \one}(v_{1}\otimes \cdots \otimes v_{l_{1}})\otimes \cdots
%%\nn 
%%&&\quad\quad\quad\quad\quad \otimes 
%%
E^{(l_{n})}_{V; \one}
(v_{l_{1}+\cdots +l_{n-1}+1}\otimes \cdots 
\otimes v_{l_{1}+\cdots +l_{n-1}+l_{n}}))),  
\end{eqnarray*}
and 
\[
E^{(m)}_{W}\circ_{m+1}\Phi: V^{\otimes m+n}\to 
\overline{W}_{z_{1}, \dots, 
z_{m+n-1}},
\]
 is given by 
\begin{eqnarray*}
\lefteqn{
(E^{(m)}_{W}\circ_{m+1}\Phi)(v_{1}\otimes \cdots \otimes v_{m+n})
}\nn
&&
=E(E^{(m)}_{W}(v_{1}\otimes \cdots\otimes v_{m};
\Phi(v_{m+1}\otimes \cdots\otimes v_{m+n}))). 
\end{eqnarray*}
Finally,  
\[
E^{W; (m)}_{WV}\circ_{0}\Phi: V^{\otimes m+n}\to 
\overline{W}_{z_{1}, \dots, 
z_{m+n-1}},
\]
 is defined by 
\begin{eqnarray*}
(E^{W; (m)}_{WV}\circ_{0}\Phi)(v_{1}\otimes \cdots \otimes v_{m+n})
%%\nn
%%&&
 =E(E^{W; (m)}_{WV}(\Phi(v_{1}\otimes \cdots\otimes v_{n})
; v_{n+1}\otimes \cdots\otimes v_{n+m})). 
\end{eqnarray*}
In the case that $l_{1}=\cdots=l_{i-1}=l_{i+1}=1$ and $l_{i}=m-n-1$, for some $1 \le i \le n$,
we will use $\Phi\circ_{i} E^{(l_{i})}_{V;\;\one}$ to 
denote $\Phi\circ (E^{(l_{1})}_{V;\;\one}\otimes \cdots 
\otimes E^{(l_{n})}_{V;\;\one})$.

%%%%%%%%%%%%%%%%%%%%%%%%%%%%%%%%%%%%%%%%%%%%%%%%%%%%%%%%%%%%%%%%%%%%%%%%%%%%
%%%%%%%%%%%%%%%%%%%%%%%%%%%%%%%%%%%%%%%%%%%%%%%%%%%%%%%%%%%%%%%%%%%%%%%%%%%%
\subsection{Maps composable with vertex operators}
\label{composable}
Since $\overline{W}$-valued rational functions above are valued in $\overline{W}$,
 for $z\in \C^{\times}$, $u, v\in V$, $w\in W$, $Y_{V}(u, z)v\in \overline{V}$, 
and $Y_{W}(u, z)v\in \overline{W}$, 
one might not be able to compose in general a linear map from 
a tensor power of $V$ to $\overline{W}_{z_{1}, \dots, z_{n}}$ with 
vertex operators. 
 Thus in \cite{Huang} they consider linear maps
from tensor powers of $V$ to $\overline{W}_{z_{1}, \dots, z_{n}}$ 
such that these maps can be composed with vertex operators in the sense mentioned above.  
\begin{definition}
\label{mapco}
For a $V$-module $W=\coprod_{n\in \C}W_{(n)}$ and $m\in \C$, 
let $P_{m}: \overline{W}\to W_{(m)}$ be 
the projection from $\overline{W}$ to $W_{(m)}$. 
%%
%\begin{defn}\label{composable}
%%
Let $\Phi: V^{\otimes n}\to 
\overline{W}_{z_{1}, \dots, z_{n}}$ be a linear map. For $m\in \N$, 
$\Phi$ is said \cite{Huang} to be composable with $m$ vertex operators if 
the following conditions are satisfied:
\begin{enumerate}
\item Let $l_{1}, \dots, l_{n}\in \Z_+$ such that $l_{1}+\cdots +l_{n}=m+n$,
$v_{1}, \dots, v_{m+n}\in V$ and $w'\in W'$. Set 
 \begin{eqnarray*}\label{psi-i}
\Psi_{i}
&
=
&
E^{(l_{i})}_{V}(v_{k_1} %{l_{1}+\cdots +l_{i-1}+1}
\otimes
\cdots \otimes v_{k_i}%{l_{1}+\cdots +l_{i-1}+l_{i}}
; \one_{V})   
%%\nn
%% &&\quad\quad\quad\quad\quad\quad\quad\quad
%%
(z_{k_1},%{l_{1}+\cdots +l_{i-1}+1}-\zeta_{i},
 \dots, 
z_{k_i}%{l_{1}+\cdots +l_{i-1}+l_{i}}-\zeta_{i}
),
%%\nn
\end{eqnarray*}
where ${k_1}={l_{1}+\cdots +l_{i-1}+1}$, ..., $v_{k_i}={l_{1}+\cdots +l_{i-1}+l_{i}}$, 
for $i=1, \dots, n$. Then there exist positive integers $N^n_m(v_{i}, v_{j})$
depending only on $v_{i}$ and $v_{j}$ for $i, j=1, \dots, k$, $i\ne j$ such that the series 
\[
\sum_{r_{1}, \dots, r_{n}\in \Z}\langle w', 
(\Phi(P_{r_{1}}\Psi_{1}\otimes \dots\otimes
P_{r_n} \Psi_{n}))(\zeta_{1}, \dots, 
\zeta_{n})\rangle,
\] 
is absolutely convergent  when 
$|z_{l_{1}+\cdots +l_{i-1}+p}-\zeta_{i}| 
+ |z_{l_{1}+\cdots +l_{j-1}+q}-\zeta_{i}|< |\zeta_{i}
-\zeta_{j}|$, 
for $i,j=1, \dots, k$, $i\ne j$ and for $p=1, 
\dots,  l_i$ and $q=1, \dots, l_j$. 
The sum must be analytically extended to a
rational function
in $(z_{1}, \dots, z_{m+n})$,
 independent of $(\zeta_{1}, \dots, \zeta_{n})$, 
with the only possible poles at 
$z_{i}=z_{j}$, of order less than or equal to 
$N^n_m(v_{i}, v_{j})$, for $i,j=1, \dots, k$,  $i\ne j$. 

\item For $v_{1}, \dots, v_{m+n}\in V$, there exist 
positive integers $N^n_m(v_{i}, v_{j})$, depending only on $v_{i}$ and 
$v_{j}$, for $i, j=1, \dots, k$, $i\ne j$, such that for $w'\in W'$, and 
${\bf v}_{n,m}=(v_{1+m}\otimes \cdots\otimes v_{n+m})$,  ${\bf z}_{n,m}=(z_{1+m}, \dots, z_{n+m})$, such that 
%%
%%\begin{eqnarray*}
%%
\[
\sum_{q\in \C}\langle w', 
(E^{(m)}_{W}(v_{1}\otimes \cdots\otimes v_{m}; 
%%\nn
%%&&\quad\quad\quad 
%%
P_{q}((\Phi({\bf v}_{n,m}))({\bf z}_{n,m})))\rangle, 
%\end{eqnarray*}
\]
is absolutely convergent when $z_{i}\ne z_{j}$, $i\ne j$
$|z_{i}|>|z_{k}|>0$ for $i=1, \dots, m$, and 
$k=m+1, \dots, m+n$, and the sum can be analytically extended to a
rational function 
in $(z_{1}, \dots, z_{m+n})$ with the only possible poles at 
$z_{i}=z_{j}$, of orders less than or equal to 
$N^n_m(v_{i}, v_{j})$, for $i, j=1, \dots, k$, $i\ne j$,. 
\end{enumerate}
%%
%%%%%%%%%%%%%%%%%%%%%%%%%%%%%%%%%%%%%%%%%%%%%%%%%%%%%%%%%%%%%%%%%%%%%%%%%%%%%%%%
\end{definition}
In \cite{Huang} one finds:
%%
%%%%%%%%%%%%%%%%%%%%%%%%%%%%%%%%%%%%%%%%%%%%%%%%%%%%%%%%%%%%%%%%%%%%%%%%%%%%%%%%%%
\begin{proposition}  %{\rm \cite{Huang}}  
\label{}
The subspace of $\hom(V^{\otimes n}, 
%\overline{
\W
%}
_{z_{1}, \dots, z_{n}})$ consisting of linear maps
having
the $L(-1)$-derivative property, having the $L(0)$-conjugation property
or being composable with $m$ vertex operators is invariant under the 
action of $S_{n}$.
\end{proposition}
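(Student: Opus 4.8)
The plan is to verify the three properties one at a time, reducing everything to the definition \eqref{sigmaction} of the $S_n$-action and elementary bookkeeping of how permutations act on the labels $(z_1,\dots,z_n)$ and on the tensor factors of $V^{\otimes n}$. Since $S_n$ is generated by transpositions, it suffices in each case to check invariance under a single transposition $\tau = \sigma_{\dots,j,\dots,i,\dots}$ swapping indices $i$ and $j$; the general case then follows by composing. For the $L_V(-1)$-derivative property, given $\Phi$ with property \eqref{lder1}, I would compute $\partial_{z_k}(\sigma\Phi)(v_1\otimes\cdots\otimes v_n)(z_1,\dots,z_n)$ directly from \eqref{sigmaction}: the partial derivative $\partial_{z_k}$ applied to $\Phi(v_{\sigma(1)}\otimes\cdots)(z_{\sigma(1)},\dots)$ hits the slot carrying the variable $z_k$, namely the slot $\sigma^{-1}(k)$, and property \eqref{lder1} for $\Phi$ converts that into insertion of $L_V(-1)$ in that slot, which is exactly $(\sigma\Phi)(v_1\otimes\cdots\otimes L_V(-1)v_k\otimes\cdots\otimes v_n)(z_1,\dots,z_n)$. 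The second condition \eqref{lder2} is even easier: the sum $\sum_i\partial_{z_i}$ is manifestly symmetric in the variables, so it is unchanged by relabeling, and the action "." of $L_W(-1)$ on the $\overline W$-valued function is carried along unchanged by $\sigma$.

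For the $L_W(0)$-conjugation property I would argue the same way: apply $z^{L_W(0)}$ to $(\sigma\Phi)(v_1\otimes\cdots\otimes v_n)(z_1,\dots,z_n) = \Phi(v_{\sigma(1)}\otimes\cdots)(z_{\sigma(1)},\dots,z_{\sigma(n)})$, use \eqref{loconj} for $\Phi$ to pull $z^{L_W(0)}$ through, obtaining $\Phi(z^{L_V(0)}v_{\sigma(1)},zz_{\sigma(1)};\ldots;z^{L_V(0)}v_{\sigma(n)},zz_{\sigma(n)})$, and recognize this as $(\sigma\Phi)(z^{L_V(0)}v_1,zz_1;\ldots;z^{L_V(0)}v_n,zz_n)$ by \eqref{sigmaction} again — the key point being that $\sigma$ permutes the pairs $(v_i,z_i)$ as units, so applying $z^{L_V(0)}$ to each $v_i$ and rescaling each $z_i$ commutes with the permutation. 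The hypotheses $(z_1,\dots,z_n)\in F_n\C$ and $(zz_1,\dots,zz_n)\in F_n\C$ are symmetric under permutation, so they are preserved.

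Composability with $m$ vertex operators is the only case requiring real care, and I expect it to be the main obstacle: here one must track that both the convergence regions and the pole-order bounds $N^n_m(v_i,v_j)$ transform correctly under $\sigma$. The strategy is that the bounds $N^n_m(v_i,v_j)$ depend only on the unordered pair $\{v_i,v_j\}$, so under $\sigma$ one simply relabels $N^n_m(v_i,v_j)\mapsto N^n_m(v_{\sigma(i)},v_{\sigma(j)})$, and the resulting function still has poles only along the diagonals $z_i=z_j$ of the stated orders. The convergence domains in conditions (1) and (2) of Definition \ref{mapco} are cut out by inequalities that are symmetric under simultaneously permuting the $v$'s and the $z$'s, so $\sigma\Phi$ satisfies them on the correspondingly permuted domain, which is a nonempty open region of the same type; since the sum extends to a rational function independent of the auxiliary variables $\zeta_i$, uniqueness of analytic continuation shows the continued function is just the $\sigma$-relabeling of the one for $\Phi$. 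I would close by remarking that since each of the three defining conditions is preserved by every transposition and the conditions are closed under composition of the $S_n$-action, the corresponding subspaces are $S_n$-invariant, which is the assertion. $\pfbox$
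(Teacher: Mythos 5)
The paper does not actually prove this proposition: it is quoted verbatim from \cite{Huang} ("In \cite{Huang} one finds:"), so there is no internal proof to compare against; your job is therefore judged against the standard verification, and your argument is essentially that verification and is correct. For the $L(-1)$-derivative and $L(0)$-conjugation properties your computation from \eqref{sigmaction} is complete: the derivative $\partial_{z_k}$ lands in slot $\sigma^{-1}(k)$, \eqref{lder1} inserts $L_V(-1)$ there, and this is exactly $(\sigma\Phi)$ evaluated on $v_1\otimes\cdots\otimes L_V(-1)v_k\otimes\cdots\otimes v_n$; the sum in \eqref{lder2} and the conjugation identity \eqref{loconj} are manifestly compatible with permuting the pairs $(v_i,z_i)$. (The reduction to transpositions is unnecessary -- the computation works for arbitrary $\sigma$ -- but harmless.) The only place where your sketch is thinner than it should be is composability: the $S_n$-action permutes the $n$ tensor slots of $\Phi$ but not the $m$ inserted vertex operators, so in condition (1) of Definition \ref{mapco} the check for $\sigma\Phi$ with a partition $l_1,\dots,l_n$ of $m+n$ must be reduced to the composability hypothesis for $\Phi$ applied with the permuted partition $l_{\sigma(1)},\dots,l_{\sigma(n)}$, the permuted auxiliary points $\zeta_{\sigma(i)}$, and the correspondingly relabeled block of vectors; this works precisely because Definition \ref{mapco} quantifies over all such partitions and all $v_1,\dots,v_{m+n}$, and in condition (2) the first $m$ vectors are untouched while the region $|z_i|>|z_k|>0$ is symmetric in the last $n$ indices. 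Your appeal to symmetry of the regions plus uniqueness of the rational extension then closes the argument, with the pole bounds simply relabeled as $N^n_m(v_{\sigma(i)},v_{\sigma(j)})$. So: correct, same (standard) route as the intended proof, with the regrouping of the $l_i$'s being the one detail worth writing out explicitly.
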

%%
%%%%%%%%%%%%%%%%%%%%%%%%%%%%%%%%%%%%%%%%%%%%%%%%%%%%%%%%%%%%%%%%%%%%%%%%%%%%%%%%%%%%%%%%%%%%%%%%%%%
%%%%%%%%%%%%%%%%%%%%%%%%%%%%%%%%%%%%%%%%%%%%%%%%%%%%%%%%%%%%%%%%%%%%%%%%%%%%%%%%%%%%%%%%%%%%%%%%%%%
\section{Chain complexes and cohomologies}
\label{complexes}
%%%%%%%%%%%%%%%%%%%%%%%%%%%%%%%%%%%%%%%%%%%%%%%%%%%%%%%%%%%%%%%%%%%%%%%%%%%%%%%%%%%%%%%%%%%%%%%%%%%
%%%%%%%%%%%%%%%%%%%%%%%%%%%%%%%%%%%%%%%%%%%%%%%%%%%%%%%%%%%%%%%%%%%%%%%%%%%%%%%%%%%%%%%%%%%%%%%%%%%
Let us recall the definition of shuffles \cite{Huang}. 
\begin{definition}
For $l \in \N$ and $1\le s \le l-1$, let $J_{l; s}$ be the set of elements of 
$S_{l}$ which preserve the order of the first $s$ numbers and the order of the last 
$l-s$ numbers, i.e., 
\[
J_{l, s}=\{\sigma\in S_{l}\;|\;\sigma(1)<\cdots <\sigma(s),\;
\sigma(s+1)<\cdots <\sigma(l)\}.
\]
The elements of $J_{l; s}$ are called shuffles. Let $J_{l; s}^{-1}=\{\sigma\;|\;
\sigma\in J_{l; s}\}$.
\end{definition}
Now we introduce the notion of a $C^n_m(V, \W)$-space: 
%%%%%%%%%%%%%%%%%%%%%%%%%%%%%%%%%%%%%%%%%%%%%%%%%%%%%%%%%%%%%%%%%%%
%% 
\begin{definition}
\label{kakashka}
Let $V$ be a vertex operator algebra and $W$ a $V$-module. 
For $n\in \Z_{+}$, let $ {C}_{0}^{n}(V, \W)$ be the vector space of all 
linear maps from $V^{\otimes n}$ to $\W_{z_{1}, \dots, z_{n}}$  
satisfying the $L(-1)$-derivative property and the $L(0)$-conjugation property. 
For $m$, $n\in \Z_{+}$, 
let $ {C}_{m}^{n}(V, \W)$ be the vector spaces of all 
linear maps from $V^{\otimes n}$ to $\W_{z_{1}, \dots, z_{n}}$ 
composable with $m$ vertex operators, and satisfying the $L(-1)$-derivative
property, the $L(0)$-conjugation property, and such that 
\begin{equation}
\label{shushu}
\sum_{\sigma\in J_{l; s}^{-1}}(-1)^{|\sigma|}
\sigma\left(\Phi(v_{\sigma(1)}\otimes \cdots \otimes v_{\sigma(l)})\right)=0.
\end{equation}
\end{definition}
%%
%%%%%%%%%%%%%%%%%%%%%%%%%%%%%%%%%%%%%%%%%%%%%%%%%%%%%%%%%%%%%%%%%%%%%%%%%%%%%%%%%%%%%%%
%\begin{remark}
%%
 %In \cite{Zu},
%% 
 Using a generalization of the construciton of the vertex algebra bundle and coordinate-free formulation of 
vertex operators in \cite{BZF} for the case of $\W$-valued forms, we obtain 
following 
\begin{lemma}
\label{popa}
 that an element \eqref{bomba} of $C^n_m(V, \W)$ is invariant with respect the group
 ${\rm Aut}_{z_1, \ldots, z_n}\Oo^{(n)}$ of $n$-dimensional independent changes of formal parameters
\[
(z_1, \ldots, z_n) \mapsto (\rho_1(z_1, \ldots, z_n), \ldots, \rho_n(z_1, \ldots, z_n)). 
\]
\hfill $\square$
\end{lemma}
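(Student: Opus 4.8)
The plan is to reduce the claim to the coordinate-free reformulation of vertex operators from \cite{BZF}, applied one tensor slot at a time. The key point is that $\Phi(v_1,z_1;\dots;v_n,z_n)$ in \eqref{bomba} is built from the matrix-element function $\F$ by tensoring each entry $v_i$ with the $\wt(v_i)$-th power $dz_i^{\wt(v_i)}$ of the differential of its own formal parameter $z_i$; this is exactly the transformation weight that the vertex algebra bundle construction assigns to a vector of conformal weight $\wt(v_i)$. So the first step is to recall, for the one-variable case, that the section $\F(dz^{\wt(v)}\otimes v, z)$ of the vertex algebra bundle $\mathcal V_W$ is by construction independent of the choice of formal coordinate $z$ — i.e.\ invariant under $\Aut\Oo$ — provided $V$ is quasi-conformal so that $L_V(0)$ acts semisimply with the weight grading and the change-of-coordinates action is the one generated by the $L_V(m)$, $m\ge -1$. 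This is precisely the content of the coordinate-independent description of vertex operators in \cite{BZF}, and the hypotheses that $\Phi\in C^n_m(V,\W)$ (the $L(-1)$-derivative property \eqref{lder1} and the $L(0)$-conjugation property \eqref{loconj}) are exactly the infinitesimal and the rescaling parts of that invariance.

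Next I would promote this from one variable to $n$ variables. Since the group $\Aut_{z_1,\dots,z_n}\Oo^{(n)}$ in the statement consists of \emph{independent} coordinate changes $(z_1,\dots,z_n)\mapsto(\rho_1(\dots),\dots,\rho_n(\dots))$ — here I read ``independent'' as: the transformation is generated slot-by-slot, each $\rho_i$ acting on the $i$-th copy of $\Oo$ — it suffices to check invariance under a change affecting only a single variable $z_i$, and then compose. The $L_V(-1)$-derivative property \eqref{lder1} in its single-slot form gives invariance under the translations $z_i\mapsto z_i+a_i$; the $L_W(0)$-conjugation property \eqref{loconj} gives invariance under the simultaneous rescalings, and combined with the $S_n$-equivariance (the Proposition quoted from \cite{Huang}) and the fact that $C^n_m(V,\W)$ is $S_n$-stable, one gets the single-slot rescaling $z_i\mapsto c_i z_i$ as well. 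Translations and rescalings generate the relevant affine part; the higher-order part of $\Aut\Oo$ is handled because composability with $m$ vertex operators and the $L(-1)$-derivative property together force the higher $L_V(m)$, $m\ge 1$, to act on $\F$ compatibly with the geometric action on jets of coordinates (this is again the \cite{BZF} mechanism: the full $\Aut\Oo$-action is the exponential of the $\mathrm{Der}_0\Oo$-action, and $\mathrm{Der}_0\Oo$ is generated topologically by the vector fields $-z^{m+1}\partial_z$ acting via $L_V(m)$). Assembling these slot-wise invariances and using that the $\rho_i$ commute as operators on distinct tensor factors yields invariance under all of $\Aut_{z_1,\dots,z_n}\Oo^{(n)}$.

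Concretely, the steps are: (1) state the one-variable vertex-algebra-bundle fact from \cite{BZF} and identify $\F(dz^{\wt(v)}\otimes v,z)$ with a section of $\mathcal V_W$; (2) check that membership in $C^n_0(V,\W)$, i.e.\ the $L(-1)$-derivative and $L(0)$-conjugation properties, is equivalent to the infinitesimal invariance of each single-slot coordinate change; (3) exponentiate to get invariance under the connected group generated by single-slot changes, using convergence/composability from $C^n_m$ to make the $\Aut\Oo$-action well-defined on $\W$-valued rational functions; (4) invoke $S_n$-equivariance to move between slots and observe that single-slot transformations in distinct variables commute, so their product realizes an arbitrary element of $\Aut_{z_1,\dots,z_n}\Oo^{(n)}$; (5) conclude $\rho\cdot\Phi=\Phi$ for all such $\rho$, which is the assertion of the lemma.

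The main obstacle I expect is step (3), the passage from infinitesimal to global invariance: one must show that the $\Aut\Oo^{(n)}$-action is actually \emph{defined} on $\W_{z_1,\dots,z_n}$ (the rational functions must stay rational, with pole orders still bounded at $z_i=z_j$, after a nonlinear change of variables), and that the formal series implementing $e^{\sum_{m\ge 1}a_m L_V(m)}$ on each vector $v_i$ converges against every $w'\in W'$ in the appropriate domain. This is exactly where composability with $m$ vertex operators in Definition \ref{mapco} is needed, and where the quasi-conformal hypothesis on $V$ (boundedness of weights, local finiteness of the $L_V(m)_{m>0}$-action) does the work; the geometric bookkeeping of how poles at $z_i=z_j$ transform under $(z_i,z_j)\mapsto(\rho_i,\rho_j)$ is routine once one knows $\rho_i-\rho_j$ vanishes to first order exactly where $z_i-z_j$ does. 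Everything else is the standard translation between the algebraic axioms of \cite{Huang} and the coordinate-free picture of \cite{BZF}.
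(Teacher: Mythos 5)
Your proposal is essentially the same argument the paper itself appeals to: the paper offers no written proof of Lemma \ref{popa} beyond invoking the vertex algebra bundle and coordinate-free formulation of vertex operators from \cite{BZF} for $\W$-valued forms, and your route (identifying $\F(dz^{\wt(v)}\otimes v,z)$ with a section of the bundle, using quasi-conformality to exponentiate the $L_V(m)$-action of $\mathrm{Der}_0\,\Oo$ slot by slot, and using composability to keep the rational functions under control) is exactly that mechanism spelled out in more detail. One small correction: single-slot rescalings $z_i\mapsto c_iz_i$ should be obtained from the $L_V(0)$-part of the $\mathrm{Der}_0\,\Oo$-action on the $i$-th slot (the \cite{BZF} machinery you already invoke), not from the diagonal $L_W(0)$-conjugation property combined with $S_n$-equivariance, which only ever yields simultaneous rescalings.
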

%%
%%
%%%%%%%%%%%%%%%%%%%%%%%%%%%%%%%%%%%%%%%%%%%%%%%%%%%%%%%%%%%%%%%%%%%%%%%%%%%%%%%
We also find in \cite{Huang}
\begin{proposition} 
Let $ {C}_{m}^{0}(V, \W)=\W$. Then we have 
 $C_{m}^{n}(V, \W)\subset  {C}_{m-1}^{n}(V, \W)$, 
for $m\in \Z_{+}$. 
\end{proposition}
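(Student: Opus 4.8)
\medskip
\noindent\textbf{Proof strategy.}
The plan is to deduce the inclusion from the observation that composability with $m$ vertex operators already contains, as a degenerate case, composability with $m-1$ vertex operators. For $n=0$ both sides equal $\W$ by the stated convention, so nothing is to prove; assume $n\ge 1$ and fix $\Phi\in C^{n}_{m}(V,\W)$. The $L(-1)$-derivative property, the $L(0)$-conjugation property and the shuffle relation \eqref{shushu} are conditions on $\Phi$ that do not mention $m$, so they pass verbatim to membership in $C^{n}_{m-1}(V,\W)$ (and for $m=1$ one needs only the first two, since $C^{n}_{0}(V,\W)$ carries no shuffle condition). Thus it suffices to check the two composability conditions of Definition~\ref{mapco} with $m$ replaced by $m-1$.

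The tool I would use is the behaviour of the $E$-elements under insertion of the vacuum. Because $Y_{V}(\one_V,z)=\id_V$ and $Y_{W}(\one_V,z)=\id_W$, one has, for every $l\ge 2$ and every slot $j$,
\begin{align*}
&E^{(l)}_{V;\one}\bigl(v_{1}\otimes\cdots\otimes v_{j-1}\otimes\one_V\otimes v_{j+1}\otimes\cdots\otimes v_{l}\bigr)(z_{1},\dots,z_{l})\\
&\quad =E^{(l-1)}_{V;\one}\bigl(v_{1}\otimes\cdots\otimes v_{j-1}\otimes v_{j+1}\otimes\cdots\otimes v_{l}\bigr)(z_{1},\dots,z_{j-1},z_{j+1},\dots,z_{l}),
\end{align*}
and similarly
\begin{align*}
&E^{(m)}_{W}\bigl(v_{1}\otimes\cdots\otimes v_{m-1}\otimes\one_V;\,w\bigr)(z_{1},\dots,z_{m})\\
&\quad =E^{(m-1)}_{W}\bigl(v_{1}\otimes\cdots\otimes v_{m-1};\,w\bigr)(z_{1},\dots,z_{m-1}).
\end{align*}
In particular the right-hand sides do not depend on the formal parameter attached to the vacuum slot, so each ingredient of the composability-with-$(m-1)$ data is literally a specialisation of an ingredient of the composability-with-$m$ data.

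For the first condition I would argue as follows. Given $l_{1}',\dots,l_{n}'\in\Z_{+}$ with $l_{1}'+\cdots+l_{n}'=(m-1)+n$, set $l_{1}=l_{1}'+1$ and $l_{i}=l_{i}'$ for $2\le i\le n$, so that $l_{1}+\cdots+l_{n}=m+n$; form the maps $\Psi_{1},\dots,\Psi_{n}$ from these $l_{i}$, taking the first argument of $\Psi_{1}$ to be $\one_V$. By the first displayed identity, $\Psi_{1}$ reduces to the $(m-1)$-case $\Psi_{1}$ and is independent of the parameter in its first slot, while $\Psi_{2},\dots,\Psi_{n}$ are unchanged; hence the series $\sum_{r_{1},\dots,r_{n}}\langle w',(\Phi(P_{r_{1}}\Psi_{1}\otimes\cdots\otimes P_{r_{n}}\Psi_{n}))(\zeta_{1},\dots,\zeta_{n})\rangle$ is exactly the series occurring in the $(m-1)$-version. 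Its absolute convergence in the prescribed region, and its analytic continuation to a rational function independent of $(\zeta_{1},\dots,\zeta_{n})$ with poles at $z_{i}=z_{j}$ of order at most $N^{n}_{m}(v_{i},v_{j})$, follow from composability of $\Phi$ with $m$ vertex operators; the spurious parameter contributes no pole, so one may take $N^{n}_{m-1}(v_{i},v_{j}):=N^{n}_{m}(v_{i},v_{j})$ on the relevant indices, which indeed depends only on $v_{i},v_{j}$. For the second condition one proceeds dually: insert $\one_V$ as the $m$-th argument of $E^{(m)}_{W}$, so that the $(m-1)$-series becomes the $m$-series evaluated on $v_{1}\otimes\cdots\otimes v_{m-1}\otimes\one_V$; it converges on the region $|z_{i}|>|z_{k}|$ (after relabeling, with the constraint on the vacuum coordinate being vacuous) and continues to a rational function with the required pole bounds, again with $N^{n}_{m-1}:=N^{n}_{m}$.

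I expect the main obstacle to be combinatorial bookkeeping rather than analysis: one must track the index shifts produced by deleting the vacuum slot, check that the convergence domain demanded in the $(m-1)$-case really is the image of the $m$-case domain under this specialisation (so that one obtains genuine convergence, not merely a formal identity), and confirm that the integers $N^{n}_{m-1}(v_{i},v_{j})$ can be chosen depending only on the pair $(v_{i},v_{j})$. Once the vacuum-insertion identities for the $E$-elements are recorded, no new analytic input is needed.
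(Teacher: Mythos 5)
The paper itself offers no argument for this inclusion --- it is simply quoted from \cite{Huang} --- so the only comparison available is with the proof there, which is exactly your vacuum-insertion argument: specialize one tensor slot to $\one_V$ and use $Y_V(\one_V,z)=\mathrm{Id}_V$, $Y_W(\one_V,z)=\mathrm{Id}_W$, so that the data witnessing composability with $m$ vertex operators specializes to data witnessing composability with $m-1$. Your write-up is correct, and it handles the two points that actually require care: the coordinate attached to the vacuum slot drops out of the series, so the convergence region demanded in the $(m-1)$-case is reached by choosing that spurious coordinate appropriately inside the $m$-case region, and one may take $N^{n}_{m-1}(v_i,v_j):=N^{n}_{m}(v_i,v_j)$, which still depends only on the pair of vectors, while the $L(-1)$-derivative, $L(0)$-conjugation and shuffle conditions do not involve $m$ and transfer verbatim.
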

%%
%%%%%%%%%%%%%%%%%%%%%%%%%%%%%%%%%%%%%%%%%%%%%%%%%%%%%%%%%%%%%%%%%%%%%%%%%%%%%%%%%%%%%%%%5
In \cite{Huang} the co-boundary operator for the double complex spaces ${C}_{m}^{n}(V, \W)$ was introduced: 
\begin{equation}
\label{hatdelta}
 {\delta}^{n}_{m}:  {C}_{m}^{n}(V, \W)
\to  {C}_{m-1}^{n+1}(V, \W). 
\end{equation} 
For $\Phi \in {C}_{m}^{n}(V, \W)$, it is given by 
\begin{equation}
\label{marsha}
{\delta}^{n}_{m}(\Phi)
=E^{(1)}_{W}\circ_{2} \Phi
+\sum_{i=1}^{n}(-1)^{i}\Phi\circ_{i} E^{(2)}_{V; \one}
 +(-1)^{n+1}
\sigma_{n+1, 1, \dots, n}(E^{(1)}_{W}\circ_{2}
\Phi),   %%\nn
\end{equation}
where $\circ_i$ is defined in Subsection \ref{valued}.    
%%
%%%%%%%%%%%%%%%%%%%%%%%%%%%%%%%%%%%%%%%%%%%%%%%%%%%%%%%%%%%%%%%%%%%%%%%%%%%%%5
Explicitly, 
for $v_{1}, \dots, v_{n+1}\in V$, $w'\in W'$ 
and $(z_{1}, \dots, z_{n+1})\in 
F_{n+1}\C$, 
\begin{eqnarray*}
\lefteqn{\langle w', (( {\delta}^{n}_{m}(\Phi))(v_{1}\otimes \cdots\otimes v_{n+1}))
(z_{1}, \dots, z_{n+1})\rangle}\nn
&&=R(\langle w', Y_{W}(v_{1}, z_{1})(\Phi(v_{2}\otimes \cdots\otimes v_{n+1}))
(z_{2}, \dots, z_{n+1})\rangle)\nn
&&\quad +\sum_{i=1}^{n}(-1)^{i}R(\langle w', 
(\Phi(v_{1}\otimes \cdots \otimes v_{i-1} \otimes 
Y_{V}(v_{i}, z_{i}-z_{i+1})v_{i+1}\nn
&&\quad\quad\quad\quad\quad\quad\quad\quad\quad 
\quad\quad\otimes \cdots \otimes v_{n+1}))
(z_{1}, \dots, z_{i-1}, z_{i+1}, \dots, z_{n+1})\rangle)\nn
&&\quad + (-1)^{n+1}R(\langle w', Y_{W}(v_{n+1}, z_{n+1})
(\Phi(v_{1}\otimes \cdots \otimes v_{n}))(z_{1}, \dots, z_{n})\rangle).
\end{eqnarray*}
 In the case $n=2$, there is a subspace 
of $ {C}_{0}^{2}(V, \W)$ 
containing $ {C}_{m}^{2}(V, \W)$ for all $m\in \Z_{+}$ such that 
$ {\delta}^{2}_{m}$ is still defined on this subspace. 
Let $ {C}_{\frac{1}{2}}^{2}(V, \W)$ be the subspace of $ {C}_{0}^{2}(V, \W)$ 
consisting of elements $\Phi$ such that for $v_{1}, v_{2}, v_{3}\in V$, $w'\in W'$, 
\begin{eqnarray*}
&& \sum_{r\in \C}\big(\langle w', E^{(1)}_{W}(v_{1};
P_{r}((\Phi(v_{2}\otimes v_{3}))(z_{2}-\zeta, z_{3}-\zeta)))(z_{1}, \zeta)\rangle\nn
&&\quad+\langle w', (\Phi(v_{1}\otimes P_{r}((E^{(2)}_{V}(v_{2}\otimes v_{3}; \one))
(z_{2}-\zeta, z_{3}-\zeta))))
(z_{1}, \zeta)\rangle\big), 
\end{eqnarray*}
and 
\begin{eqnarray*}
\lefteqn{\sum_{r\in \C}\big(\langle w', 
(\Phi(P_{r}((E^{(2)}_{V}(v_{1}\otimes v_{2}; \one))(z_{1}-\zeta, z_{2}-\zeta))
\otimes v_{3}))
(\zeta, z_{3})\rangle}\nn
&&\quad +\langle w', 
E^{W; (1)}_{WV}(P_{r}((\Phi(v_{1}\otimes v_{2}))(z_{1}-\zeta, z_{2}-\zeta));
v_{3}))(\zeta, z_{3})\rangle\big)
\end{eqnarray*}
are absolutely convergent in the regions $|z_{1}-\zeta|>|z_{2}-\zeta|, |z_{2}-\zeta|>0$ and 
$|\zeta-z_{3}|>|z_{1}-\zeta|, |z_{2}-\zeta|>0$, respectively, 
and can be analytically extended to 
rational functions in $z_{1}$ and $z_{2}$ with the only possible poles at
$z_{1}, z_{2}=0$ and $z_{1}=z_{2}$.
It is clear that 
${C}_{m}^{2}(V, \W)\subset {C}_{\frac{1}{2}}^{2}(V, \W)$
for $m\in \Z_{+}$. 
 The co-boundary operator 
\begin{equation}
\label{halfdelta}
{\delta}^{2}_{\frac{1}{2}}: {C}_{\frac{1}{2}}^{2}(V, \W)
\to {C}_{0}^{3}(V, \W),
\end{equation}
is defined in \cite{Huang} by 
\begin{eqnarray}
\label{halfdelta1}
&& {\delta}^{2}_{\frac{1}{2}}(\Phi) 
%(v_{1}\otimes v_{2} \otimes v_{3}))(z_{1}, z_{2}, z_{3})\rangle
%%\nn
 %%&&
= E^{(1)}_{W} \circ_2 \Phi %(v_{1}; \Phi)  
%%&&
%%\quad \quad 
%%
+ \sum\limits_{i=1}^2 (-1)^i  E^{(2)}_{V, \one_V}  \circ_i \Phi
%%
%%+ E^{(2)}_{V, \one_V}  \circ_2 \Phi 
%%+ \Phi(v_{1}\otimes E^{(2)}_{V}(v_{2}\otimes v_{3}; \one)
%%\nn
%% &&\quad 
%%- E^{(2)}_{V, \one_V} \circ_1 \Phi %(E^{(2)}_{V}(v_{1}\otimes v_{2}; \one)) \otimes v_{3}
%%\nn
%% &&\quad \quad 
+ E^{W; (1)}_{WV}\circ_2\Phi, 
%%
%%\end{eqnarray}
%%
%%or, explicitly, 
%%
%%\begin{eqnarray*}
%%
%%\label{halfdeltadef}
\nn
&&\langle w', (({\delta}^{2}_{\frac{1}{2}}(\Phi))
(v_{1}\otimes v_{2} \otimes v_{3}))(z_{1}, z_{2}, z_{3})\rangle
\nn
 &&
=R(\langle w', (E^{(1)}_{W}(v_{1};
\Phi(v_{2}\otimes v_{3}))(z_{1}, z_{2}, z_{3})\rangle\nn
&&
\quad \quad 
+\langle w', (\Phi(v_{1}\otimes E^{(2)}_{V}(v_{2}\otimes v_{3}; \one)))
(z_{1}, z_{2}, z_{3})\rangle)
\nn
 &&\quad 
-R(\langle w', 
(\Phi(E^{(2)}_{V}(v_{1}\otimes v_{2}; \one))
\otimes v_{3}))(z_{1}, z_{2}, z_{3})\rangle
\nn
 &&\quad \quad 
+\langle w', 
(E^{W; (1)}_{WV}(\Phi(v_{1}\otimes v_{2}); v_{3}))
(z_{1}, z_{2}, z_{3})\rangle)
\end{eqnarray}
for $w'\in W'$,
$\Phi\in {C}_{\frac{1}{2}}^{2}(V, \W)$,
$v_{1}, v_{2}, v_{3}\in V$ and $(z_{1}, z_{2}, z_{3})\in F_{3}\C$. 

%%%%%%%%%%%%%%%%%%%%%%%%%%%%%%%%%%%%%%%%%%%%%%%%%%%%%%%%%%%%%%%%%%%%%%%%%%%%%%%%%%%%%%%%%%%%%%%%%%%%%%%%%%%%%
Consider the short sequence of the double complex spaces 
\begin{equation}
\label{shortseq}
0\longrightarrow C_{3}^{0}(V, \W)
\stackrel{\delta_{3}^{0}}{\longrightarrow}
C_{2}^{1}(V, \W)
\stackrel{\delta_{2}^{1}}{\longrightarrow}C_{\frac{1}{2}}^{2}(V, \W)
\stackrel{\delta_{\frac{1}{2}}^{2}}{\longrightarrow}
C_{0}^{3}(V, \W)\longrightarrow 0, 
\end{equation}
of (\ref{hatdelta}).  
%%%%
The first and last arrows are trivial embeddings and projections. 

%%
%%%%%%%%%%%%%%%%%%%%%%%%%%%%%%%%%%%%%%%%%%%%%%%%%%%%%%%%%%%%%%%%%%
In \cite{Huang} we find: 
%%%%%%%%%%%%%%%%%%%%%%%%%%%%%%%%%%%%%%%%%%%%%%%%%%%%%%%%%%%%%%%%%%%
%%
\begin{proposition}
\label{delta-square} %{\rm \cite{Huang}}
For $n\in \N$ and $m\in \Z_{+}+1$, the co-boundary operators \eqref{marsha} and \eqref{halfdelta1} satisfy 
the chain complex conditions, i.e., 
\[
 {\delta}^{n+1}_{m-1}\circ {\delta}^{n}_{m}=0,
\] 
%%
%%and 
%%
\[
{\delta}^{2}_{\frac{1}{2}}\circ {\delta}^{1}_{2}=0.
\]
\end{proposition}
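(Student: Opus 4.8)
The plan is to verify the two identities $\delta^{n+1}_{m-1}\circ\delta^{n}_{m}=0$ and $\delta^{2}_{1/2}\circ\delta^{1}_{2}=0$ by direct expansion of the defining formula \eqref{marsha} (resp.\ \eqref{halfdelta1}) and cancellation of all resulting terms, exactly as one does for the Hochschild-type differentials the operators are modelled on. First I would note that, by \propref{delta-square}'s companion statements in \cite{Huang} — namely \lemref{popa}, the containments $C^{n}_{m}(V,\W)\subset C^{n}_{m-1}(V,\W)$, and the $S_{n}$-invariance of the relevant subspaces — the composite $\delta^{n+1}_{m-1}\circ\delta^{n}_{m}$ is at least well-defined as a map $C^{n}_{m}(V,\W)\to C^{n+2}_{m-2}(V,\W)$, so it is legitimate to compute it. The composability hypotheses are what guarantee that every intermediate rationalization $R(\cdot)$ appearing below exists and that the operators $E^{(1)}_{W}\circ_{2}$, $\Phi\circ_{i}E^{(2)}_{V;\one}$, etc.\ can legitimately be iterated; this is the only place where the hypothesis $m\in\Z_{+}+1$ (so that $\delta^{n}_{m}$ lands in $C^{n+1}_{m-1}$ with $m-1\ge 1$, on which $\delta^{n+1}_{m-1}$ is still defined) is used.

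Next I would carry out the bookkeeping. Applying $\delta^{n+1}_{m-1}$ to each of the three groups of terms in \eqref{marsha} produces terms of four types: (a) those with two outer $Y_{W}$-insertions, i.e.\ iterated $E^{(1)}_{W}\circ_{2}(E^{(1)}_{W}\circ_{2}\Phi)$ — these collapse by the associativity of vertex operators acting on $W$, precisely the identity $E^{(1)}_{W}\circ_2 (E^{(1)}_W\circ_2\Phi)$ evaluating to the $Y_W$-iterate which equals, after rationalization, the same expression obtained the other way; (b) those with two internal $E^{(2)}_{V;\one}$-insertions at positions $i<j$, which cancel in pairs $\Phi\circ_i E^{(2)}_{V;\one}\circ_j E^{(2)}_{V;\one}$ against $\Phi\circ_j E^{(2)}_{V;\one}\circ_i E^{(2)}_{V;\one}$ using the commutativity of disjoint insertions and the Koszul sign $(-1)^{i}(-1)^{j}$ versus $(-1)^{j-1}(-1)^{i}$; (c) those where an internal insertion meets the outer $Y_W$, which cancel the $Y_{W}$-term of $\delta^{n+1}_{m-1}$ applied to the $i=1$ internal term; (d) the analogous cancellations involving the last term $\sigma_{n+1,1,\dots,n}(E^{(1)}_{W}\circ_{2}\Phi)$, where the permutation $\sigma_{n+2,1,\dots,n+1}$ bookkeeping has to be tracked carefully. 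The upshot is that every term appears exactly twice with opposite sign. For the second identity $\delta^{2}_{1/2}\circ\delta^{1}_{2}=0$ the same scheme applies, now with one of the outer insertions replaced by the opposite-side operator $E^{W;(1)}_{WV}\circ_{2}$ (equivalently $\circ_{0}$), and the cancellations use in addition the compatibility of $E^{(1)}_{W}$ and $E^{W;(1)}_{WV}$ expressing that the left and right $W$-module actions come from the same vertex operator $Y_W$ — concretely, that $Y_W(u,z)$ and the "inserted-at-position-0" action satisfy the associativity/commutativity needed for the region $|\zeta-z_3|>|z_i-\zeta|$ in the definition of $C^{2}_{1/2}(V,\W)$.

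The main obstacle will be the analytic one rather than the combinatorial one: the three pieces of $\delta^{n}_{m}(\Phi)$ are a priori defined by rationalizing series that converge only in different, non-overlapping domains of $(z_1,\dots,z_{n+1},\zeta)$-space, so the naive term-by-term cancellation is an identity of rational functions that must be justified by analytic continuation from a common subdomain where all the relevant expansions converge simultaneously. I would handle this exactly as in \cite{Huang}: pair $w'\in W'$ against everything, observe that each term becomes an honest rational function in the $z_i$ after applying $R$, use that a rational function is determined by its restriction to any nonempty open set, and check convergence of the doubly-iterated series on a suitable region (iterated annuli $|z_i|\gg|z_j|$ together with small $|z_i-z_{i+1}|$) — this is where the integers $N^{n}_{m}(v_i,v_j)$ and the composability conditions of \defref{mapco} do the real work, bounding pole orders so that the continuation is unambiguous. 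Once the identity is established for $\langle w',\,\cdot\,\rangle$ for all $w'$, it holds in $\overline{W}_{z_1,\dots,z_{n+2}}$, and then \lemref{popa} together with the $S_n$-invariance already quoted shows the common value lies in the asserted target space $C^{n+2}_{m-2}(V,\W)$, completing the proof.
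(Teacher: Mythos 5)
Your plan is viable, but it is not what the paper does. The paper offers no proof of the first identity $\delta^{n+1}_{m-1}\circ\delta^{n}_{m}=0$ at all: it is quoted verbatim from \cite{Huang}, where the Hochschild-style expansion-and-cancellation you outline (including the analytic-continuation work with the pole bounds $N^{n}_{m}(v_i,v_j)$) is carried out in detail. The only argument the paper itself supplies is for the second identity, and it is a two-line reduction that your proposal misses: since $\delta^{1}_{2}(C^{1}_{2}(V,\W))\subset C^{2}_{1}(V,\W)\subset C^{2}_{\frac{1}{2}}(V,\W)$ and $\delta^{2}_{\frac{1}{2}}$ agrees with $\delta^{2}_{1}$ on $C^{2}_{1}(V,\W)$, one gets $\delta^{2}_{\frac{1}{2}}\circ\delta^{1}_{2}=\delta^{2}_{1}\circ\delta^{1}_{2}=0$ as a special case of the first identity, with no need to redo the expansion with $E^{W;(1)}_{WV}\circ_{2}$ and the region conditions defining $C^{2}_{\frac{1}{2}}(V,\W)$, as you propose. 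So your route re-proves from scratch what the paper imports, and does extra work where the paper exploits an inclusion; what it buys is self-containedness, at the cost of the heaviest steps (the exact pairing of terms including the cyclic term $\sigma_{n+1,1,\dots,n}(E^{(1)}_{W}\circ_{2}\Phi)$, and the justification that all rationalized series agree on a common domain) being asserted rather than executed. One small imprecision to fix if you carry it out: the iterated outer terms of type (a) do not ``collapse'' by themselves; they cancel against the terms where the outer $Y_{W}$-insertion meets the first internal $E^{(2)}_{V;\one}$-insertion via the duality property $Y_{W}(u_{1},z_{1})Y_{W}(u_{2},z_{2})\sim Y_{W}(Y_{V}(u_{1},z_{1}-z_{2})u_{2},z_{2})$, so your groups (a) and (c) are really two halves of one cancellation.
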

Since 
\[
{\delta}_{2}^{1}( {C}_{2}^{1}(V, \W))\subset 
 {C}_{1}^{2}(V, \W)\subset 
 {C}_{\frac{1}{2}}^{2}(V, \W),
\]
the second formula follows from the first one, and 
\[
{\delta}^{2}_{\frac{1}{2}}\circ  {\delta}^{1}_{2}
= {\delta}^{2}_{1}\circ  {\delta}^{1}_{2} 
=0.
\]
%%

%%%%%%%%%%%%%%%%%%%%%%%%%%%%%%%%%%%%%%%%%%%%%%%%%%%%%%%%%%%%%%%%%%%%%5
Using the double complexes (\ref{hatdelta}) and (\ref{halfdelta}),   
 for $m\in \Z_{+}$ and $n\in \N$, one introduces in \cite{Huang} 
 the $n$-th cohomology $H^{n}_{m}(V, W)$ of a grading-restricted vertex algebra $V$ 
with coefficient in $W$, and composable with $m$ vertex operators 
to be 
\[
H_{m}^{n}(V, \W)=\ker \delta^{n}_{m}/\mbox{\rm im}\; \delta^{n-1}_{m+1}, 
\]
%%
 %%and 
%%
\[
H^{2}_{\frac{1}{2}}(V, \W)
=\ker \delta^{2}_{\frac{1}{2}}/\mbox{\rm im}\; \delta_{2}^{1}.
\] 
%%

%%%%%%%%%%%%%%%%%%%%%%%%%%%%%%%%%%%%%%%%%%%%%%%%%%%%%%%%%%%%%%%%%%%%%%%%%%%%%%%%%%%%%%%%%%%%%%%%%%%%%
%%%%%%%%%%%%%%%%%%%%%%%%%%%%%%%%%%%%%%%%%%%%%%%%%%%%%%%%%%%%%%%%%%%%%%%%%%%%%%%%%%%%%%%%%%%%%%%%%%%%%
%%%%%%%%%%%%%%%%%%%%%%%%%%%%%%%%%%%%%%%%%%%%%%%%%%%%%%%%%%%%%%%%%%%%%%%%%%%%%%%%%%%%%%%%%%%%%%%%%%%%%
\section{The $\epsilon$-product of $C^n_m(V, \W)$-spaces}
\label{productc}
In this section  we introduce %recall \cite{Zu} 
 definition of the $\epsilon$-product of 
%
%th this section we consider an application of the material of Section 
%%
%\ref{product} to %the case of 
%%
%the
%%
 double complex spaces $C^n_m(V, \W)$ with the image in another double complex space coherent with respect 
to the original differential \eqref{hatdelta}, and satisfying the symmetry \eqref{shushu}, 
$L_V(0)$-conjugation \eqref{loconj}, and $L_V(-1)$-derivative \eqref{lder1} properties 
%%
%
%%%%%%%%%%%%%%%%%%%%%%%%%%%%%%%%%%%%%%%%%%%%%%%%%%%%%%%%%%%%%%%%%%%%%%%%%%%%%%%%%%%%%%%%%%%%%%%%%%%%%%%%
%%%%%%%%%%%%%%%%%%%%%%%%%%%%%%%%%%%%%%%%%%%%%%%%%%%%%%%%%%%%%%%%%%%%%%%%%%%%%%%%%%%%%%%%%%%%%%%%%%%%%%%%
 and derive an analogue of Leibniz formula. 
%%
%%%%%%%%%%%%%%%%%%%%%%%%%%%%%%%%%%%%%%%%%%%%%%%%%%%%%%%%%%%%%%%%%%%%%%%%%%%%%%%%%%%%%%%%%%%%%%%%%%%%%%%

%%%%%%%%%%%%%%%%%%%%%%%%%%%%%%%%%%%%%%%%%%%%%%%%%%%%%%%%%%%%%%%%%%%%%%%%%%%%%%%%%%
%%%%%%%%%%%%%%%%%%%%%%%%%%%%%%%%%%%%%%%%%%%%%%%%%%%%%%%%%%%%%%%%%%%%%%%%%%%%%%%%%%
\subsection{Motivation and geometrical interpretation}
The structure of $C^n_m(V, \W)$-spaces 
%%
% \W_{z_1, \ldots, z_n}$-spaces %\overline{W}$ 
%%
is quite complicated and it is difficult to introduce algebraically a product 
of its elements.  
In order to define an appropriate product of two $C^n_m(V, \W)$-spaces 
 %\W_{z_1, \ldots, z_n}$-spaces 
%%
we first have to interpret  
them geometrically. 
Basically, a $C^n_m(V, \W)$-space
% \W_{z_1, \ldots, z_n}$-space 
%%
must be associated with a certain model space, the algebraic $\W$-language should be 
transferred to a geometrical one, two model spaces should be "connected" appropriately, and,
 finally, a product should be 
defined. 
%%  

%%
%% zdes nuzhno bolee podrobno napisat 
%%
For two  
%In order to define an appropriate product for
%%
 $\W_{x_1, \ldots, x_k}$- and 
$\W_{y_{1}, \ldots,  y_{n}}$-spaces we first associate formal complex parameters  
in the sets 
$(x_1, \ldots, x_k)$ and $(y_{1}, \ldots, y_n)$ 
to parameters of two auxiliary %auxilirary  
spaces. %$\mathcal X_a$,  $a=1$, $2$.
 Then we describe a geometric procedure to 
form a resulting model space %$\mathcal X$ 
by combining two original model spaces.  %$\mathcal X_a$. 
Formal parameters of 
$\W_{z_1, \ldots, z_{k+n}}$ should be then identified with  
parameters of the resulting space.  %$\mathcal X$.
%%%% 

%%%%%%%%%%%%%%%%%%%%%%%%%%%%%%%%%%%%%%%%%%%%%%%%%%%%%%%%%%%%%%%%%%%%%%%%%%%%%%%%%%%%%%%%%%%%%%%%%%%%%%%%%%%%
Note that %by %Definition \ref{} 
according to our assumption, $(x_1, \ldots, x_k) \in F_k\C$, and $(y_{1}, \ldots, y_{n}) \in F_{n}\C$. 
As it follows from the definition of the configuration space $F_n\C$ in Subsection \ref{valued},
 in the case of coincidence of two 
formal parameters they are excluded from $F_n\C$. 
In general, it may happen that some number $r$ of formal parameters of $\W_{x_1, \ldots, x_k}$ coincide with some 
$r$ formal parameters of $\W_{y_{1}, \ldots, y_{n}}$ on the whole $\C$ (or on a domain of definition).  
Then,   
we exclude one formal parameter from each coinciding pair. 
%%
 %and 
%%
We require that the set of formal parameters
\begin{equation}
\label{zsto}
(z_1, \ldots, z_{k+n-r})= ( \ldots, {x}_{i_1}, \ldots, {x}_{i_r}, \ldots ; 
 \ldots, \widehat {y}_{j_1},  \ldots, \widehat{y}_{j_r}, \ldots ),   
\end{equation} 
where $\; \widehat{.} \; $ denotes the exclusion of corresponding formal parameter for 
 $x_{i_l}=y_{j_l}$, $1 \le l \le r$, 
for the resulting model space %$\mathcal X$ 
 would belong to $F_{k+n-r}\C$. %, where 
We denote this operation of formal parameters exclusion by 
$\widehat{R}\;\F(x_1, \ldots, x_k; y_{1}, \ldots, y_{n}; \epsilon)$. 
%%
%%
%%
%

%%
%%%%%%%%%%%%%%%%%%%%%%%%%%%%%%%%%%%%%%%%%%%%%%%%%%%%%%%%%%%%%%%%%%%%%%%%%%%%%%%%%%%%%%%%%%%%%%%%%%%%
Now we formulate the definition of the $\epsilon$-product of two $C^n_m(V, \W)$-spaces: 
%%
%%%%%%%%%%%%%%%%%%%%%%%%%%%%%%%%%%%%%%%%%%%%%%%%%%%%%%%%%%%%%%%%%%%%%%%%%%%%%%%%%%%%%%%%%%%%%%%%%%%%%%
%%%%%%%%%%%%%%%%%%%%%%%%%%%%%%%%%%%%%%%%%%%%%%%%%%%%%%%%%%%%%%%%%%%%%%%%%%%%%%%%%%%%%%%%%%%%%%%%%%%%%%
\begin{definition}
For 
$\F(v_1, x_1; \ldots; v_{k}, x_k)  \in  C^{k}_{m}(V, \W)$,  and 
$\F(v'_{1}, y_{1}; \ldots; v'_{n}, y_{n})  \in   C_{m'}^{n}(V, \W)$ %$n'=n-k-1$, 
%%     
%%
 %with $r$ pairs of common vertex algebra formal parameters, 
%%
%let 
%%
%$(z_1, \ldots, z_{k+n})$ be the set of non-common formal parameters, and  
%%
%$(v_1, \ldots, v_k; v'_1, \ldots, v'_{n})$ be the set of  $V$-elements. 
%% 
%(and, correspondingly, $r$ formal variables), 
%%
%%
%Then 
%%
the product %$\F$
\begin{eqnarray}
\label{gendef}
&& \F(v_1, x_1; \ldots; v_{k}, x_k) \cdot_\epsilon \F(v'_{1}, y_{1}; \ldots; v'_{n}, y_{n})  
\nn
&&
\qquad \qquad \mapsto 
\widehat{R} \; \F\left( v_1, x_1; \ldots; v_{k}, x_k; v'_{1}, y_{1}; \ldots; v'_{n}, y_{n}; \epsilon\right), 
%%
%%\nn
%&&
%%
\end{eqnarray}
 is a $\W_{
%x_1, \ldots, x_k; y_{1}, \ldots, y_{n} 
%%
z_1, \ldots, z_{k+n-r}
 }$-valued rational 
form 
\begin{eqnarray}
\label{Z2n_pt_epsss}
&& \langle w',  \widehat{R} \; \F (v_1, x_1; \ldots; v_{k}, x_k; v'_{1}, y_{1}; \ldots; v'_{n}, y_{n}; \epsilon) \rangle 
%%
%\nn
%%
%&& \qquad =\langle w',  \F (
%v_1, x_1; \ldots; v_{k}, x_k) \cdot_{\epsilon} \F(v'_{1}, y_{1}; \ldots; v'_{n}, y_{n}
%
%v'_{1}, z_1;  \ldots; v'_{k+n'-r},  z_{k+n'-r}; 
%\epsilon
%) \rangle 
%%
\nn 
& & \quad  =  
\sum_{u\in V }  
 \langle w', Y^{W}_{WV}\left(  
\F (v_{1}, x_{1};  \ldots; v_{k}, x_{k}), \zeta_1\right)\; u \rangle  
\nn
& &
\quad   %\qquad  \qquad 
 \langle w', Y^{W}_{WV}\left( 
\F(v'_{1}, y_{1}; \ldots; {v'}_{i_1}, \widehat{y}_{i_1}; 
\ldots; \ldots; {v'}_{j_r}, \widehat{y}_{j_r}; \ldots;  v'_{n}, y_{n})
 , \zeta_{2}\right) \; \overline{u} \rangle,      
\end{eqnarray}
%%
%%
%defined by \eqref{Z2n_pt_eps1q1}.   
%%
%%
%%
via \eqref{def},  
parametrized by %a non-zero complex number $\epsilon$,  
%%
 %and 
%%
%complex numbers %two other complex parameters 
%%
$\zeta_1$, $\zeta_2  \in \C$, and we exclude all monomials $(x_{i_l} - y_{j_l})$, $1 \le l \le r$, from 
\eqref{Z2n_pt_epsss}. 
%% 
%characterizing \eqref{Z2n_pt_eps}, and .   
%%
%%
%%%%%%%%%%%%%%%%%%%%%%%%%%%%%%%%%%%%%%%%%%%%%%%%%%%%%%%
%%
%where here $u$ ranges over any $V_{l}$-basis and $\overline{u}$ is the dual of $u$ with respect to 
%%
The sum 
is taken over any $V_{l}$-basis $\{u\}$,  
where $\overline{u}$ is the dual of $u$ with respect to a non-degenerate bilinear form %the Li--Z metric
 $\langle .\ , . \rangle_\lambda$, \eqref{eq: inv bil form} over $V$, (see Appendix \ref{grading}). 
%%
  %_{\lambda}$,  %^{\mathrm{sq}}$  
%%
%%
%of \eqref{eq: inv bil form} as defined by the %square bracket 
%%
%% 
%%
%Virasoro operators $\{L[n]\}$. 
%%
%and with $\lambda$ of \eqref{eq:lamb_eps}.   
%%
\end{definition}
\begin{remark}
Due to the symmetry of the geometrical interpretation describe above, we could exclude 
from the set $(x_1, \ldots, x_k)$ in \eqref{Z2n_pt_epsss}  
$r$  
formal parameters which belong 
to coinciding pairs  resulting to the same 
definition of the $\epsilon$-product. 
\end{remark}
%%
%%%%%%%%%%%%%%%%%%%%%%%%%%%%%%%%%%%%%%%%%%%%%%%%%%%%%%%%%%%%%%%%%%%%%%%%%%%%%%%%%%%%%%%%%%%%%%%%%%%%
%Since the multipliers in the sum of \eqref{} are rational forms, 
%%
%%%%%%%%%%%%%%%%%%%%%%%%%%%%%%%%%%%%%%%%%%%%%%%%%%%%%%%%%%%%%%%%%%%%%%%%%%%%%%%%%%%%%%%%%%%%%%%%%%%%%%%%%%
%%%%%%%%%%%%%%%%%%%%%%%%%%%%%%%%%%%%%%%%%%%%%%%%%%%%%%%%%%%%%%%%%%%%%%%%%%%%%%%%%%%%%%%%%%%%%%%%%%%%%%%%%%
%\section{Product of spaces of $\W$-valued forms}
%%
%\label{product}
%%
%%%%%%%%%%%%%%%%%%%%%%%%%%%%%%%%%%%%%%%%%%%%%%%%%%%%%%%%%%%%%%%%%%%%%%%%%%%%%%%%%%%%%%%%%%%%%%%%%%%
%% 
%% 
%% 
%Due to \eqref{eq:lamb_eps}, 
%%
 %the product \eqref{Z2n_pt_eps} can be rewritten as
%%
%
%\bigskip 
%
%%%%
%
%where here $u$ ranges over any $V_{l}$-basis and $\overline{u}$ is the dual of $u$ with respect to 
%%
%the a %standard 
%%
%non-degenerate bilinear form %Li--Z metric  
%%
%$\langle u ,v\rangle_V$. %^{\mathrm{sq}}$. 
%%
By the standard reasoning \cite{FHL, Zhu}, %it is clear that %Definition 
 \eqref{Z2n_pt_epsss} does not depend on the choice of a basis of $u \in V_l$, $l \in \Z$.   
%%
%\end{remark}
%%
%%%%%%%%%%%%%%%%%%%%%%%%%%%%%%%%%%%%%%%%%%%%%%%%%%%%%%%%%%%%%%%%%%%%%%%%%%%%%%%%%%%%%%%%%%%%%%%%%%%%%%%%
%%
In the case when multiplied forms $\F$ do not contain $V$-elements, i.e., for $\Phi$, $\Psi \in \W$, 
  \eqref{Z2n_pt_epsss} defines the product $\Phi \cdot_\epsilon \Psi$ associated to 
 a %the
 rational function: 
\begin{eqnarray}
\label{Z2_part}
{\mathcal R(\epsilon)}= \sum_{l \in \Z }  \epsilon^l 
 \sum_{u\in V_l } 
\langle w', Y^{W}_{WV}\left(  
\Phi, \zeta_1\right) \; u \rangle 
\langle w', Y^{W}_{WV}\left(
\Psi,   
\zeta_2 \right) \; \overline{u} \rangle,  
\end{eqnarray}  
which defines $\F(\epsilon) \in \W$ via $\mathcal R(\epsilon)=\langle w', \F(\epsilon)\rangle$. 
%%
%%
%%%%%%%%%%%%%%%%%%%%%%%%%%%%%%%%%%%%%%%%%%%%%%%%%%%%%%%%%%%%%%%%%%%%%%%%%%%%%%%%%%%%%%%%%%%%%%%%%%%%%%%%%%%
%%%%%%%%%%%%%%%%%%%%%%%%%%%%%%%%%%%%%%%%%%%%%%%%%%%%%%%%%%%%%%%%%%%%%%%%%%%%%%%%%%%%%%%%%%%%%%%%%%%%%%%%%%%
\subsection{Convergence and properties of of the $\epsilon$-product}% and existence of corresponding rational form}
%%
%% 
%%%%%%%%%%%%%%%%%%%%%%%%%%%%%%%%%%%%%%%%%%%%%%%%%%%%%%%%%%%%%%%%%%%%%%%%%%%%%%%%%%%
%%%%%%%%%%%%%%%%%%%%%%%%%%%%%%%%%%%%%%%%%%%%%%%%%%%%%%%%%%%%%%%%%%%%%%%%%%%%%%%%%%%
 %In \cite{Zu}, 
%%
In order to prove convergence of a product of elements of two spaces
 $\W_{x_1, \ldots, x_k}$ 
and $\W_{y_1, \ldots, y_n}$  of rational $\W$-valued forms,   
we have to use a geometrical interpretation \cite{H2, Y}.
Recall that a $\W_{z_1, \ldots, z_n}$-space is defined by means of matrix elements of the form \eqref{def}. 
For a vertex algebra $V$, this corresponds \cite{FHL}  to a matrix element of a number of $V$-vertex operators
with formal parameters identified with local coordinates on a Riemann sphere. 
%%
%%%%%%%%%%%%%%%%%%%%%%%%%%%%%%%%%%%%%%%%%%%%%%%%%%%%%%%%%%%%%%%%%%%%%%%%%%%%%%%
% 
%% 
%
%
Geometrically, each space $\W_{z_1, \ldots, z_n}$ can be also associated to a Riemann sphere
 with a few marked points, %punctures 
and local coordinates 
%%
%(identified to $\zeta_1$, $\zeta_2$ as we will see in Subsection \ref{}) 
%%
 vanishing at these points %punctures 
\cite{H2}. 
An extra point %nother  of those points 
can be associated to a center of an annulus used in order 
to sew the sphere with another sphere.   
The product \eqref{Z2n_pt_epsss} has then a geometric interpretation.    
The resulting model space would also be associated to a Riemann sphere formed as a result of sewing procedure.  
%
% with three marked points. 
%%
%% should be parametrized by a nonzero complex number z or more generally,
%%
%in terms of spheres with three punctures and local coordinates (identified to $\zeta_1$, $\zeta_2$)
%%
 %vanishing at these punctures \cite{H2}. This will be explained in the proof of Proposition \ref{derga}. 
%%
%%%%%%%%%%%%%%%%%%%%%%%%%%%%%%%%%%%%%%%%%%%%%%%%%%%%%%%%%%%%%%%%%%%%%%%%%%%%%%%%%%%
%% 
%%
In Appendix \ref{sphere} we describe explicitly the geometrical procedure of sewing of two spheres \cite{Y}.  
%% 

%%%%%%%%%%%%%%%%%%%%%%%%%%%%%%%%%%%%%%%%%%%%%%%%%%%%%%%%%%%%%%%%%%%%%%%%%%%%%%%%%%%%%%%%%%%%%%%%%%%%%%%%
%% 
%%
Let us identify (as in \cite{H2, Y, Zhu, TUY, FMS, BZF}) two sets $(x_1, \ldots, x_k)$ and $(y_1, \ldots, y_n)$ of 
complex formal parameters,
with local
coordinates of two sets of points on the first and the second Riemann spheres correspondingly.   
Identify complex parameters $\zeta_1$, $\zeta_2$ of \eqref{Z2n_pt_epsss} with coordinates \eqref{disk} of 
the annuluses \eqref{zhopki}. 
%%
%Then let us involve the sewing procedure \cite{Y} of these spheres recalled in Appendix \ref{sphere}. 
%%
After identification of annuluses $\mathcal A_a$ and $\mathcal A_{\overline{a}}$, 
$r$ coinciding coordinates may occur. This takes into account case of coinciding formal parameters.
%%
%%%%%%%%%%%%%%%%%%%%%%%%%%%%%%%%%%%%%%%%%%%%%%%%%%%%%%%%%%%%%%%%%%%%%%%%%%%%%%%%%%%%%%%%%%%%%%%%%%%%%%%%%%%%%%%%%%%
In this way, we construct the map \eqref{gendef}.  
%% 

%%%%%%%%%%%%%%%%%%%%%%%%%%%%%%%%%%%%%%%%%%%%%%%%%%%%%%%%%%%%%%%%%%%%%%%%%%%%%%%%%%
%%%%%%%%%%%%%%%%%%%%%%%%%%%%%%%%%%%%%%%%%%%%%%%%%%%%%%%%%%%%%%%%%%%%%%%%%%%%%%%%%%
As we %will 
see in \eqref{Z2n_pt_epsss}, %in the next subsection, 
the product is defined 
by a sum of products of matrix elements  \cite{FHL} associated to each of two spheres.  
Such sum is supposed to describe a $\W$-valued rational differential form defined 
on a sphere formed %obtained 
as a result of geometrical sewing \cite{Y} of two initial spheres. 
Since two initial spaces $\W_{x_1, \ldots, x_k}$ and $\W_{y_1, \ldots, y_n}$ 
are defined through  rational-valued forms 
%%
%with the conditions of absolute convergence 
%% 
expressed by 
 matrix elements of the form \eqref{def}.   
We then arrive at 
%%
%it is then proved in \cite{Zu}, %(Proposition \ref{derga}),
%%
% that 
%%
the resulting product defines a 
$\W_{z_1, \ldots, z_{k+n-r} 
%%
%x_1, \ldots, x_k; y_1, \ldots, y_n
%%
%%z_1, \ldots, z_{k+n}
}$-valued rational 
form by means of an absolute convergent matrix element on the resulting sphere. 
%%
% formed from sewing of two initial spheres. 
%%
%In the next subsections we prove the existence of such rational form, and absolute convergence of 
%%
%corresponding matrix element.  
%
The complex sewing parameter, parameterizing the module space of sewin spheres, parametrizes also the product of 
$\W$-spaces.  
%

%%%%%%%%%%%%%%%%%%%%%%%%%%%%%%%%%%%%%%%%%%%%%%%%%%%%%%%%%%%%%%%%%%%%%%%%%%%%%%%%%
%%
Next, we formulate 
%%
%%%%%%%%%%%%%%%%%%%%%%%%%%%%%%%%%%%%%%%%%%%%%%%%%%%%%%%%%%%%%%%%%%%%%%%%%%%%%%%%%%%%%
%%%%%%%%%%%%%%%%%%%%%%%%%%%%%%%%%%%%%%%%%%%%%%%%%%%%%%%%%%%%%%%%%%%%%%%%%%%%%%%%%%%%%
\begin{definition}
\label{sprod}
We define the
%%
%Let us also define the 
%%
action of an element $\sigma \in S_{k+n-r}$ on the product of 
$\F  (v_{1}, x_{1};  \ldots; v_{k}, x_{k}) \in \W_{x_1, \ldots, x_k}$, and 
$\F  (v'_{1}, y_{1}; \ldots; v'_{n}, y_{n}) \in \W_{y_1, \ldots, y_n}$, as
%%
%is introduced as follows 
%%
%%%%%%%%%%%%%%%%%%%%%%%%%%%%%%%%%%%%%%%%%%%%%%%%%%%%%%%%%%%%%%%%%%%%%%%%%%%%%%%%%%%%%%%%%%%%%%%%%%%%%%%
\begin{eqnarray}
\label{Z2n_pt_epsss}
&& \langle w',  \sigma(\widehat{R}\;\F) (v_{1}, x_{1};  \ldots; v_{k}, x_{k}; v'_{1}, y_{1}; \ldots; v'_{n}, y_{n}; 
%%
%v'_{1}, z_1;  \ldots; v'_{k+n'-r},  z_{k+n'-r}; 
%%
\epsilon) \rangle 
\nn
&&
\qquad =\langle w',  \F (\widetilde{v}_{\sigma(1)}, z_{\sigma(1)};  \ldots; \widetilde{v}_{\sigma(k+n-r)},  z_{\sigma(k+n-r)};  
%%
%%
%%
%v_{\sigma(1)}, x_{\sigma(1)};  \ldots; v_{\sigma(k)}, x_{\sigma(k)}; 
%%
%v'_{\sigma(1)}, y_{\sigma(1)}; \ldots; v'_{\sigma(n)}, y_{\sigma(n)}; 
%%
%v'_{1}, z_1;  \ldots; v'_{k+n'-r},  z_{k+n'-r}; 
%%
\epsilon) \rangle 
%%%%%%%%%
\nn 
& & \qquad =  
\sum_{u\in V }  
 \langle w', Y^{W}_{WV}\left(  
\F (\widetilde{v}_{\sigma(1)}, z_{\sigma(1)};  \ldots; \widetilde{v}_{\sigma(k)}, z_{\sigma(k)}), \zeta_1\right)\; u \rangle  
\nn
& &
\qquad   %\qquad  \qquad 
 \langle w', Y^{W}_{WV}\left( 
\F
(\widetilde{v}_{\sigma(k+1)}, z_{\sigma(k+1)}; \ldots; \widetilde{v}_{\sigma(k+n-r)}, z_{\sigma(k+n-r)}) , \zeta_{2}\right) \; \overline{u} \rangle, 
\end{eqnarray}
where by $(\widetilde{v}_{\sigma(1)}, \ldots, \widetilde{v}_{\sigma(k+n-r)})$ we denote a permutation of 
\begin{equation}
\label{notari}
(\widetilde{v}_{1}, \ldots, \widetilde{v}_{k+n-r})
=(v_1, \ldots; v_k; \ldots, \widehat{v}'_{j_1}, \ldots, \widehat{v}'_{j_r},  \ldots  ). 
\end{equation}
\end{definition}
%%
%%%%%%%%%%%%%%%%%%%%%%%%%%%%%%%%%%%%%%%%%%%%%%%%%%%%%%%%%%%%%%%%%%%%%%%%%%%%%%%%%%%%%%%%%%%%%%%%%
%\begin{remark}
%%
Let $t$ be the number of common vertex operators the mappings  
$\F(v_{1}, x_{1}$;  $\ldots$; $v_{k}, x_{k}) \in C^{k}_{m}(V, \W)$ and 
$\F(v'_{1}, y_{1}; \ldots; v'_{n}, y_{n}) \in C^{n}_{m'}(V, \W)$, 
are composable with.  
%%
%Similar to the case of common formal parameters, this case is 
%% 
%separately treated with a decrease to $m+m'-t$ of number of composable vertex operators.   
%%
%In what follows, we exclude this case from considerations. 
%%
%\end{remark}
%%
%%
%%
%%%%%%%%%%%%%%%%%%%%%%%%%%%%%%%%%%%%%%%%%%%%%%%%%%%%%%%%%%%%%%%%%%%%%%%%%
%In \cite{Zu} 
%%
The rational 
%%In particular,  %%the proof of this proposition in \cite{Zu} 
%%
%%we show the convergence of the 
%%
form corresponding to the  
$\epsilon$-product $\widehat{R}  \F\left(v_1, x_1; \ldots; v_{k}, x_k; v'_{1}, y_{1}; \ldots; v'_{n}, y_{n}
; \epsilon\right)$ converges in $\epsilon$,  and %the fact it 
satisfies \eqref{shushu},  $L_V(0)$-conjugation \eqref{loconj} and 
$L_V(-1)$-derivative \eqref{lder1} properties. 
 Using Definition \ref{kakashka} of $C^n_m(V, \W)$-space and Definition \ref{mapco} of mappsings composable with 
vertex operators,  
%%
%we have proven 
%%
 we then have  
%%
%%%%%%%%%%%%%%%%%%%%%%%%%%%%%%%%%%%%%%%%%%%%%%%%%%%%%%%%%%%%%%%%%%%%%%%%%%%%%%5
\begin{proposition}
\label{tolsto}
For $\F(v_1, x_1; \ldots; v_{k}, x_k) \in C_{m}^{k}(V, \W)$ and 
$\F(v'_{1}, y_{1}; \ldots; v'_{n}, y_{n})\in C_{m'}^{n}(V, \W)$, 
the product $\widehat{R} \F\left(v_1, x_1; \ldots; v_{k}, x_k; v'_{1}, y_{1}; \ldots; v'_{n}, y_{n}
%%
%v_{1}, z_{1}; \ldots; v'_{k+n}, z_{{k+n}}
%%
; \epsilon\right)$ \eqref{Z2n_pt_epsss} 
belongs to the space $C^{k+n-r
}_{m+m'-t
}(V, \W)$, i.e.,  
\begin{equation}
\label{toporno}
\cdot_\epsilon : C^{k}_{m}(V,\W) \times C_{m'}^{n}(V, \W) \to  C_{m+m'-t
}^{k+n-r}(V, \W).  
\end{equation}
\hfill $\square$
\end{proposition}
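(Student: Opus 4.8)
The plan is to verify, one by one, that the rational form $\widehat{R}\,\F\bigl(v_1,x_1;\ldots;v_k,x_k;v'_1,y_1;\ldots;v'_n,y_n;\epsilon\bigr)$ obtained in \eqref{Z2n_pt_epsss} lies in $C^{k+n-r}_{m+m'-t}(V,\W)$, by checking each of the defining conditions of Definition \ref{kakashka}: membership in the ambient $\W_{z_1,\ldots,z_{k+n-r}}$, the $L_V(-1)$-derivative property \eqref{lder1}--\eqref{lder2}, the $L_V(0)$-conjugation property \eqref{loconj}, composability with $m+m'-t$ vertex operators in the sense of Definition \ref{mapco}, and the shuffle symmetry \eqref{shushu}. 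First I would record that, by the discussion preceding the statement together with the convergence claim established geometrically via the sewing of two spheres (Appendix \ref{sphere}, following \cite{H2,Y}), the $\epsilon$-product converges to a genuine $\W_{z_1,\ldots,z_{k+n-r}}$-valued rational form with the prescribed pole structure; this handles the target space and the rationality requirements in both items of Definition \ref{mapco}.

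Next I would treat the two $L_V$-properties. For the $L_V(-1)$-derivative property, I would differentiate \eqref{Z2n_pt_epsss} in each $z_i$ and use that $\partial_\zeta Y^W_{WV}(\cdot,\zeta) = Y^W_{WV}(L(-1)\,\cdot,\zeta)$ together with the $L_V(-1)$-derivative property already satisfied by each factor $\F(v_1,x_1;\ldots)$ and $\F(v'_1,y_1;\ldots)$ (inherited from $C^k_m$ and $C^n_{m'}$), moving the $L(-1)$ inside the appropriate vertex-algebra slot; the excluded monomials $(x_{i_l}-y_{j_l})$ play no role since differentiation commutes with the exclusion operation $\widehat R$. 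For the $L_V(0)$-conjugation property I would use the standard fact that $z^{L(0)}$ acts compatibly through $Y^W_{WV}$ and rescales $\zeta_1,\zeta_2$ uniformly, so that the $z$-rescaling of all the $z_i$'s can be absorbed, using the conjugation property of each factor; here one checks that the dual-basis sum $\sum_u \langle\cdot\rangle\langle\cdot\rangle$ is invariant because the bilinear form $\langle\,\cdot\,,\cdot\,\rangle_\lambda$ of \eqref{eq: inv bil form} is $L(0)$-invariant. The shuffle relation \eqref{shushu} is inherited by the same mechanism: a shuffle permutation of the combined arguments $(\widetilde v_1,\ldots,\widetilde v_{k+n-r})$ splits, up to sign, into shuffles acting separately on the two groups of arguments coming from the two factors, each of which kills the corresponding factor by \eqref{shushu}; here Definition \ref{sprod} of the $S_{k+n-r}$-action is exactly what lets one pull the permutation through the product.

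The main obstacle — and the step that deserves the most care — is the composability count: showing that the product is composable with precisely $m+m'-t$ vertex operators, where $t$ is the number of common vertex operators the two factors are composable with. The point is that when one forms the compositions $E^{(l)}_{V;\one}$ or $E^{(m'')}_W$ demanded by Definition \ref{mapco} for the product and expands $Y^W_{WV}$ as a sum over $P_q$-projections, the resulting iterated sums factor through the two spheres; one group of vertex operators is absorbed by the composability of $\F(v_1,x_1;\ldots)$ with $m$ operators, another by that of $\F(v'_1,y_1;\ldots)$ with $m'$, but the $t$ operators attached to formal parameters that got identified in the sewing are counted once rather than twice, which is the source of the $-t$. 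I would make this precise by writing out the nested series for a general choice of $l_1,\ldots$, grouping the indices according to which sphere they sit on, and invoking absolute convergence (hence Fubini) to rearrange; the pole-order bounds $N^{k+n-r}_{m+m'-t}(v_i,v_j)$ are then taken to be sums of the corresponding bounds for the two factors, plus the contribution from poles created at $\zeta_1=\zeta_2$ by the sewing, which after the exclusion $\widehat R$ reduce to poles at $z_i=z_j$. Assembling these verifications gives \eqref{toporno}. \hfill$\square$
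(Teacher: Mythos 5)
Your plan follows essentially the same route as the paper: the paper itself gives no detailed verification of Proposition \ref{tolsto}, but simply asserts it after the preceding discussion that the $\epsilon$-product converges (via the geometric sewing of two spheres of Appendix \ref{sphere}) and inherits the shuffle, $L_V(0)$-conjugation and $L_V(-1)$-derivative properties, invoking Definitions \ref{kakashka} and \ref{mapco}. Your condition-by-condition checklist — including the composability count $m+m'-t$ and the pole-order bookkeeping — is a more explicit elaboration of exactly that argument, so it is consistent with (and more detailed than) the paper's own treatment.
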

%%
%%%%%%%%%%%%%%%%%%%%%%%%%%%%%%%%%%%%%%%%%%%%%%%%%%%%%%%%%%%%%%%%%%%%%%%%%%%%%%%%%%%%%%
%\begin{remark}
%%
%%
%\end{remark}
%%
%In this section we study %the
 %properties of 
%First, we have to define the action of various operators on 
%%
%the product 
%%
%$\F(v_1, x_1$; $ \ldots $; $  v_k, x_{k} $; $ v'_1, y_1; \ldots $; $  v'_{n},  y_{n} $; $ \epsilon )$ of 
%%
%\eqref{Z2n_pt_eps1q1}. 
%%
%%
%%%%%%%%%%%%%%%%%%%%%%%%%%%%%%%%%%%%%%%%%%%%%%%%%%%%%%%%%%%%%%%%%%%%%%%%%%%%%%%%%%%%%%%%%%%%%%%%%%%%%%%%%%%
\begin{remark}
Note that due to \eqref{wprop}, in 
Definition %\ref{duplodef}, and in 
\eqref{Z2n_pt_epsss} %in particular, 
it is assumed that 
$\F (v_{1}, x_{1} $; $  \ldots $ ; $  v_{k}, x_{k})$ and $\F(v'_{1}, y_1; \ldots; v'_{n}, y_{n})$
 are composable with the $V$-module $W$ vertex operators 
$Y_W(u, -\zeta_1)$ and $Y_W(\overline{u}, -\zeta_2)$ correspondingly.  
%%
%%(see Section \ref{application} for the definition 
%%
%%of composability).   
%%
The product \eqref{Z2n_pt_epsss} is actually defined by a sum of 
products of matrix elements of ordinary $V$-module $W$ vertex operators 
acting on $\W%_{z_1, \ldots, z_n}
$-elements.
%%
%According to \eqref{}, 
The elements 
%%
 %In what follows %Subsection \ref{}
%%
 %we will see that, since 
%%
$u \in V$ and $\overline{u} \in V'$ 
are connected  by \eqref{dubay}, and $\zeta_1$, $\zeta_2$ are related by \eqref{pinch}. 
%
%appear in a relation to each other. 
%%
%%Then the 
%%
%%the product \eqref{Z2n_pt_eps} depends on $\epsilon$.   
%
%of $\zeta_1\zeta_2$ is associated to $\epsilon$ via \eqref{pinch}, and, therefore, changes the power of $\epsilon$ in .  
%%
%%%%%%%%%%%%%%%%%%%%%%%%%%%%%%%%%%%%%%%%%%%%%%%%%%%%%%%%%%%%%%%%%%%%%%%%%%%%%%%%%%%%%%%%%%%%%%%%%%%%%%%%%%%%%%%%%%%
%%
The form %\eqref{} 
of the product defined above is natural in terms of the theory of chacaters for vertex operator algebras 
\cite{TUY, FMS, Zhu}.   
\end{remark}
%%%%%%%%%%%%%%%%%%%%%%%%%%%%%%%%%%%%%%%%%%%%%%%%%%%%%%%%%%%%%%%%%%%%%%%%%%%%%%%%%%%%%%%%%%%%%%%%%%%%%%%%%%
%%
\begin{remark}
%%
%In contrast to \cite{Zu}, 
%%
For purposes of construction of cohomological invariant, we do not exclude in this paper 
the case of $r$ pais of common formal parameters $x_i=y_j$, $1 \le i \le k$, $1 \le j \le n$, for 
$\F(v_1, x_1; \ldots; v_{k}, x_k) \in C_{m}^{k}(V, \W)$ and 
$\F(v'_{1}, y_{1}; \ldots; v'_{n}, y_{n})\in C_{m'}^{n}(V, \W)$  in Proposition \ref{}. 
Such formal parameter pairs are excluded from the right hand side of the map \eqref{toporno}. 
%%
%Since we assume that $(x_1, \ldots, x_k; y_{1}, \ldots, y_n)\in F_{k+n}\C$, i.e.,
%% coincidences of $x_i$ and $y_j$ are  %excluded by the definition of $F_{k+n}\C$.  
%%
\end{remark}
%%
%%%%%%%%%%%%%%%%%%%%%%%%%%%%%%%%%%%%%%%%%%%%%%%%%%%%%%%%%%%%%%%%%%%%%%%%%%%%%%%%%%%%%%%%%%
%Since the product of $\F(v_{1}, x_{1}$;  $\ldots$; v_{k}, x_{k}) \in C^{k}_{m}(V, \W)$ and 
%%
%$\F(v'_{1}, y_{1}; \ldots; v'_{n}, y_{n}) \in C^{n}_{m'}(V, \W)$ results in 
%%
%an element of $C^{k+n}_{m+m'}(V, \W)$, then,  
%%
%similar to Proposition \ref{tudaty} \cite{Huang}, the following 
%%%
%corollary follows directly from Proposition \eqref{tolsto} and Definition \ref{sprod}: %, \ref{}, and \ref{}:
%%%%%%%%%%%%%%%%%%%%%%%%%%%%%%%%%%%%%%%%%%%%%%%%%%%%%%%%%%%%%%%%%%%%%%%%%%%%%%%%%%%%%%%%%%%%%%%
%%
%%%%%%%%%%%%%%%%%%%%%%%%%%%%%%%%%%%%%%%%%%%%%%%%%%%%%%%%%%%%%%%%%%%%%%%%%%%%%%%%%%%%%%%%%%%%%%%%55
We then have 
%%
%In \cite{Zu} we also proved 
%%
two corollaries: 
\begin{corollary}%{\rm \cite{Huang}}  
For the spaces $\W_{x_1, \ldots, x_k}$ and $\W_{y_1, \ldots, y_n}$
 with the product \eqref{Z2n_pt_epsss} $\F \in \W_{z_1, \ldots, z_{k+n-r}  
%x_1, \ldots, x_k; y_1, \ldots, y_n
}$,   
the subspace of $\hom(V^{\otimes n}, 
%%
%\overline
%%
{\W}_{z_1, \ldots, z_{k+n-r}}$   
%
%x_1, \ldots, x_k; y_1, \ldots, y_n})$   
%%
consisting of linear maps 
having the $L_W(-1)$-derivative property, having the $L_V(0)$-conjugation property
or being composable with $m$ vertex operators is invariant under the 
action of $S_{k+n-r}$.
\end{corollary}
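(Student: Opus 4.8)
The plan is to reduce the corollary to the $S_n$-invariance result already recorded as a proposition in Section~\ref{composable}, transported through the structure of the $\epsilon$-product. First I would observe that by Proposition~\ref{tolsto} the $\epsilon$-product $\widehat{R}\,\F$ lies in $C^{k+n-r}_{m+m'-t}(V,\W)$, so it is in particular a linear map $V^{\otimes(k+n-r)}\to\W_{z_1,\ldots,z_{k+n-r}}$; hence the ambient $\hom$-space on which $S_{k+n-r}$ acts via \eqref{sigmaction} is exactly the one appearing in the earlier proposition, only with $n$ replaced by $k+n-r$. The content to be checked is therefore that each of the three defining properties---$L_W(-1)$-derivative \eqref{lder1}, $L_V(0)$-conjugation \eqref{loconj}, and composability with $m$ vertex operators (Definition~\ref{mapco})---singles out a subspace of that $\hom$-space that is stable under $\sigma\in S_{k+n-r}$.

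The key steps, in order, would be: (i) spell out, using Definition~\ref{sprod}, how $\sigma$ acts on $\widehat{R}\,\F$, namely by permuting the combined tuple \eqref{notari} of $V$-elements and the associated formal parameters inside the matrix-element expression \eqref{Z2n_pt_epsss}; (ii) note that the $L_W(-1)$-derivative, $L_V(0)$-conjugation and shuffle conditions for the product have already been asserted to hold (the paragraph preceding Proposition~\ref{tolsto}), and that each of these conditions is phrased symmetrically in the entries $(\widetilde v_i,z_i)$, so applying $\sigma$ merely relabels indices and the relation is preserved---this is the same mechanism as in the proof of the original $S_n$-invariance proposition; (iii) for composability with $m$ vertex operators, invoke that $\widehat{R}\,\F\in C^{k+n-r}_{m+m'-t}(V,\W)$ and that, by the cited proposition applied verbatim in $k+n-r$ variables, the composability condition cuts out an $S_{k+n-r}$-invariant subspace. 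Concatenating (ii) and (iii) gives that the intersection of these subspaces---which is where $\widehat{R}\,\F$ lives---is $S_{k+n-r}$-invariant, proving the corollary.

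The main obstacle I anticipate is step (ii) for the composability clause rather than for the derivative/conjugation clauses: one must be careful that the permutation $\sigma$, acting on the $z_i$'s that were produced by the sewing/exclusion operation $\widehat R$, does not disturb the absolute-convergence regions or the pole-order bounds $N^{k+n-r}_{m+m'-t}(v_i,v_j)$ in an asymmetric way. The resolution is that the bounds depend only on the pair $(v_i,v_j)$ and not on their positions, and the convergence domains in Definition~\ref{mapco} are themselves permutation-covariant (a permutation of the variables carries one admissible region to another), so the rational function obtained after analytic continuation is genuinely the $\sigma$-image of the original and satisfies the same estimates. Once this symmetry of the defining conditions is made explicit, the invariance follows formally, exactly as in \cite{Huang}; no new convergence input beyond that already granted for the $\epsilon$-product is needed.
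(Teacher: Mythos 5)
Your proposal is correct and follows essentially the same route the paper implicitly takes: the corollary is treated as an immediate consequence of the $S_{n}$-invariance proposition quoted from \cite{Huang}, applied with $n$ replaced by $k+n-r$, once Proposition~\ref{tolsto} places the $\epsilon$-product in $C^{k+n-r}_{m+m'-t}(V,\W)$ and Definition~\ref{sprod} supplies the $S_{k+n-r}$-action. Your additional care about the permutation-covariance of the convergence regions and pole-order bounds in the composability condition is exactly the mechanism underlying the cited proposition, so nothing further is needed.
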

%%
%%%%%%%%%%%%%%%%%%%%%%%%%%%%%%%%%%%%%%%%%%%%%%%%%%%%%%%%%%%%%%%%%%%%%%%%%%%
%%%%%%%%%%%%%%%%%%%%%%%%%%%%%%%%%%%%%%%%%%%%%%%%%%%%%%%%%%%%%%%%%%%%%%%%%%%
%%
%Finally, we have the following
%%
\begin{corollary}
\label{functionformprop}
%%
%Let ${v}_{i}\in V$, $1 \le i \le k$, ${v'}_{j}\in V$, $1 \le j \le n$.   %and $v'_{j}\in V$, $1 \le j \le n$,
%%
% be vertex algebra elements. %quasi-primary vectors of weights %square bracket weight $wt[v_{i}]$ for $1 \le i \le k$,
%%
%${\rm wt}(v_{i})$ and  ${\rm wt}(v_{j})$.   % for $1 \le j \le n$, %and $j=1,\ldots, n$. 
%% 
%Let $x_{i}\in \widehat{\Sigma}^{(0)}_{1}$ and
%%
%$y_i \in \widehat{\Sigma}_{0}^{(2)}$  
%%
%with formal parameters related by the %sewing 
%%
%relation 
%%
%\[
%%
%x_i y_j=\epsilon=-\lambda^2, 
%%
%\]
%%
%for $1 \le i \le k$, $1 \le j \le n$, 
%%
%%
%and where the branch covering \eqref{eq:branch} is chosen with
%%
%\begin{equation*} 
%%
%\left( \frac{dy_{j}}{dx_{i}}\right)^{{\rm wt} (v_i)}=\left( \frac{\lambda}{x_{i}}\right)^{2{\rm wt} (v_i)}. 
%%
%\end{equation*} 
%%
%%
%Then 
%%
For a fixed set $(v_1, \ldots v_k; v_{k+1}, \ldots, v_{k+n}) \in V$ of vertex algebra elements, and
 fixed $k+n$, and $m+m'$,  
%%% 
 the $\epsilon$-product 
 $\F(v_1, z_1; \ldots; v_k, z_k; v_{k+1}, z_{k+1}; \ldots $ ; $ v_{k+n-r}, y_{k+n-r}; \epsilon)$,  
\[
\cdot_{\epsilon}: C^{k}_m(V, \W) \times C^{n}_{m'}(V, \W) \rightarrow C^{k+n-r}_{m+m'-t}(V, \W), 
\]
of the spaces $C^{k}_{m}(V, \W)$ and $C^{n}_{m'}(V, \W)$, 
for all choices 
of $k$, $n$, $m$, $m'\ge 0$, 
is the same element of $C^{k+n-r}_{m+m'-t}(V, \W)$
for all possible $k \ge 0$. 
%%
%  
%%
%%
%\end{eqnarray}
%%
%%
%%are maps to the same element of the space $C^{k+n}_{m+m'}(V, \W)$. %, %for all $0 \le l \le k$. 
%%
 %% independent of the choice of $l=0, \ldots, k+n$, 
% where $N_{k}$ is  
%the number of odd parity vectors in the set $\{v_1,\ldots ,v_{k}\}$ 
%%
\hfill $\square$
\end{corollary}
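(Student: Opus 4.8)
The plan is to show that the rational form attached to the $\epsilon$-product in \eqref{Z2n_pt_epsss} is determined by the unordered collection of vertex algebra elements with their formal parameters together with the sewing parameter $\epsilon$, and not by the way this collection is partitioned between the two factors $C^k_m(V,\W)$ and $C^n_{m'}(V,\W)$. Concretely, I would fix the list $(v_1,\dots,v_{k+n})$ and the sum $m+m'$, pick two admissible splittings $(k,n),(m,m')$ and $(k',n'),(m'',m''')$ with $k+n=k'+n'$ and $m+m'=m''+m'''$, and prove that the two forms produced by \eqref{gendef} coincide as elements of $C^{k+n-r}_{m+m'-t}(V,\W)$. By Proposition \ref{tolsto} both forms already lie in this space, and by the preceding corollary the space is $S_{k+n-r}$-invariant, so it suffices to establish equality of the underlying rational functions \eqref{def} on a common domain of convergence and then appeal to uniqueness of analytic continuation.

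For this I would first use the geometric description of the $\epsilon$-product from Appendix \ref{sphere}: the two spaces correspond to Riemann spheres with marked points, the product is realized by sewing these spheres along annuli with coordinates $\zeta_1,\zeta_2$, and the resulting form lives on the sewn sphere \cite{H2, Y}. Changing the splitting $(k,n)$ amounts to moving some marked points (with their attached $V$-elements) from one sphere to the other before sewing; this produces an isomorphic sewn surface carrying the same configuration of marked points, hence the same rational form. To make this precise I would unwind Definition \ref{kakashka} through the $E$-elements, so that each factor $\F(v_1,x_1;\dots;v_k,x_k)$ becomes a rationalized matrix element of a product of $V$- and $W$-vertex operators; the sum $\sum_{u\in V}$ in \eqref{Z2n_pt_epsss}, weighted by $\epsilon$ through the grading as in \eqref{Z2_part}, is then exactly the insertion of the resolution of the identity $\mathrm{id}_V=\sum_{l\in\Z}\sum_{u\in V_l}u\otimes\overline u$ furnished by the non-degenerate bilinear form at the sewing node, independence of the basis $\{u\}$ being the standard fact of \cite{FHL, Zhu}. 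Associativity and locality of vertex operators, together with the $L_V(-1)$-derivative property \eqref{lder1} (governing translation of a formal parameter) and the $L_V(0)$-conjugation property \eqref{loconj} (governing its rescaling), then show that sliding one factor $Y^W_{WV}(\cdot,\zeta_i)$ across the node --- i.e. passing from $k$ to $k\pm1$ --- leaves the total matrix element unchanged after rationalization, precisely because the convergence and rational-extension clauses of Definition \ref{mapco} hold on overlapping regions.

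The $m$-dependence is then easy: composability with $m$ vertex operators is a decreasing condition, and Proposition \ref{tolsto} gives the bound $m+m'-t$ with $t$ the number of common vertex operators, a quantity intrinsic to the common-operator structure of the two factors and not to how $m+m'$ is split; hence the element produced is literally the same rational form viewed in the same space $C^{k+n-r}_{m+m'-t}(V,\W)$ for every admissible splitting. Combining this with the $S_{k+n-r}$-invariance from the previous corollary to absorb the relabelling of the merged parameter list $(z_1,\dots,z_{k+n-r})$ yields the stated equality in $C^{k+n-r}_{m+m'-t}(V,\W)$.

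The hard part will be making rigorous the step where the resolution of identity at the sewing node is slid past a vertex operator: this requires controlling the double summation over $u$ and over the grading projections $P_r$ appearing inside the $E$-elements and inside Definition \ref{mapco}, justifying the interchange of these summations, and matching the domains in which the various multiple series converge absolutely. In conformal-field-theory language this is genus-zero sewing (factorization) consistency; here it has to be extracted from Huang's composability axioms \cite{Huang} together with the convergence of the $\epsilon$-product already established before Proposition \ref{tolsto}. Once everything is reduced, on a single common domain, to one absolutely convergent multiple series representing one matrix element on the sewn sphere, the equality of the two splittings follows and analytic continuation propagates it to all of $F_{k+n-r}\C$.
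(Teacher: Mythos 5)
The paper gives no argument for this corollary at all --- it is stated with a closing box, as something immediate from the geometric sewing discussion and Proposition \ref{tolsto} --- so your route (realizing both splittings of the data as one matrix element on the same sewn sphere and invoking genus-zero factorization through the node) is exactly the route the paper implicitly relies on, only spelled out in more detail. The difficulty is that, as a proof, your proposal stops precisely where the content of the statement lies: the assertion that the insertion of $\sum_{u} u\otimes\overline{u}$ at the node can be slid past a vertex operator (equivalently, that moving a marked point from one component sphere to the other before sewing does not change the rationalized matrix element) \emph{is} the claim that the $\epsilon$-product does not depend on the splitting $(k,n)$, $(m,m')$, and you explicitly defer it as ``the hard part.'' The tools you name --- locality, the translation property $e^{\zeta L_W(-1)}Y_W(v,z)e^{-\zeta L_W(-1)}=Y_W(v,z+\zeta)$, basis-independence of the pairing, and the convergence clauses of Definition \ref{mapco} --- are the right ones, and they are the same manipulations the paper itself carries out in Appendix \ref{duda} when proving Proposition \ref{tosya}; but until the interchange of the $u$-summation with the grading projections $P_r$ is justified and the overlapping domains of absolute convergence are exhibited, what you have is a credible plan rather than a proof.

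One further point you should make explicit rather than assume: the target space $C^{k+n-r}_{m+m'-t}(V,\W)$ must itself be independent of the splitting, i.e.\ $r$ and $t$ must depend only on the total data. For $r$ this follows from the requirement that the parameters within each factor lie in a configuration space (coinciding parameters are forced into different factors, so $r$ is determined by the total multiset of $z$'s); for $t$ you only assert intrinsicness in passing, and since ``common composable vertex operators'' is the vaguest notion in the construction, a sentence pinning this down is needed before the phrase ``the same element of $C^{k+n-r}_{m+m'-t}(V,\W)$'' is even well posed.
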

By Lemma \ref{popa}, elements of the space $C^{k+n-r}_{m+m'-t}$ resulting from the $\epsilon$-product are 
invariant with respect to changes of formal parameters of the group  
${\rm Aut}_{z_1, \ldots, z_{k+n-r}}\Oo^{(k+n-r)}$. 

%%
%%%%%%%%%%%%%%%%%%%%%%%%%%%%%%%%%%%%%%%%%%%%%%%%%%%%%%%%%%%%%%%%%%%%%%%%%%%%%%%%%%%%
We then have 
%%
%%%%%%%%%%%%%%%%%%%%%%%%%%%%%%%%%%%%%%%%%%%%%%%%%%%%%%%%%%%%%%%%%%%%%%%%%%%%%%%%%%%
%%%%%%%%%%%%%%%%%%%%%%%%%%%%%%%%%%%%%%%%%%%%%%%%%%%%%%%%%%%%%%%%%%%%%%%%%%%%%%
\begin{definition}
For fixed sets $(v_1, \ldots, v_k)$, $(v'_1, \ldots, v'_n) \in V$,  
$(x_1, \ldots, x_k)\in \C$, $(y_1, \ldots, y_n$) $\in \C$,   
 we call the set of all $\W_{x_1, \ldots, x_{k} ;   y_1, \ldots, y_{n}}$-valued rational forms 
$\widehat{R} \F(
v_1, x_1; \ldots;  v_k, x_{k} $ ; $ v'_1, y_1; \ldots;  v'_{n},  y_{n}; 
%%
%v_1, z_1; \ldots; v_n, z_n $; $ 
%%
\epsilon)$ defined  by \eqref{Z2n_pt_epsss} 
with the parameter $\epsilon$ exhausting all possible values,    
%%
%a whole module space $\mathcal M(\epsilon)$  
%%
 the complete product of the spaces $\W_{x_1, \ldots, x_k}$ and  $\W_{y_{1}, \ldots, y_n}$.  
\end{definition}
%%
%%%%%%%%%%%%%%%%%%%%%%%%%%%%%%%%%%%%%%%%%%%%%%%%%%%%%%%%%%%%%%%%%%%%%%%%%%%%%%%%%%%%%%%%
%%%%%%%%%%%%%%%%%%%%%%%%%%%%%%%%%%%%%%%%%%%%%%%%%%%%%%%%%%%%%%%%%%%%%%%%%%%%%%%%%%%%%%%%
%%
\subsection{Coboundary operator acting on the product space}
%%
%%%%%%%%%%%%%%%%%%
%%
%In this subsection we show that the product \eqref{Z2n_pt_epsss} admits the action of %corresponding 
%%
%the chain-cochain operator \eqref{hatdelta}. 
%%
%%%%%%%%%%%%%%%%%%%%%%%%%%%%%%%%%%%%%%%%%%%%%%%%%%%%%%%%%%%%%%%%%%%%%%%%%%%%%%%%%%%%%%%%
In Proposition \ref{tolsto} we proved that the  product \eqref{Z2n_pt_epsss} of elements   
   $\F_1 \C_{m}^{k}(V, \W)$ and $\F_2 \in C_{m'}^{n}(V, \W)$ belongs to $C^{k+n-r}_{m+m'-t}(V, \W)$.  
Thus, the product admits the action ot the differential operator $\delta^{k+n-r}_{m+m'-t}$ defined in 
\eqref{hatdelta} where $r$ is the number of common formal parameters, and $t$ the number of commpon 
composable vertex
operators for $\F_1$ and $\F_2$. 
%%  
%%%%%%%%%%%%%%%%%%%%%%%%%%%%%%%%%%%%%%%%%%%%%%%%%%%%%%%%%%%%%%%%%%%%%%%%%%%%%%%%%%%%%%%%
%%%%%%%%%%%%%%%%%%%%%%%%%%%%%%%%%%%%%%%%%%%%%%%%%%%%%%%%%%%%%%%%%%%%%%%%%%%%%%%%%%%%%%%%
%%
 The co-boundary operator \eqref{hatdelta}
%%
 % acting on the product of  
%%
%the spaces $C_{m}^{k}(V, \W)$ and $C_{m'}^{n'}(V, \W)$, 
%%
 %and \eqref{halfdelta}
%%
 possesses a variation of Leibniz law with respect to the product 
\eqref{Z2n_pt_epsss}. %In 
%%
%%
%Indeed, in 
%\cite{Zu} we proved  
%%
We then have
%%
%%%%%%%%%%%%%%%%%%%%%%%%%%%%%%%%%%%%%%%%%%%%%%%%%%%%%%%%%%%%%%%%%%%%%%%%%%%%%
%%
\begin{proposition}
\label{tosya}
For $\F(v_{1}, x_{1};  \ldots; v_{k}, x_{k}) \in C_{m}^{k}(V, \W)$ 
and 
$\F(v'_{1}, y_{1}; \ldots; v'_{n}, y_{n}) \in C_{m'}^{n}(V, \W)$, 
%%
% $w' \in W'$ one has 
%%
the action of $\delta_{m + m'-t}^{k + n-r}$ on their product \eqref{Z2n_pt_epsss} is given by 
\begin{eqnarray}
\label{leibniz}
%%
  % R\left( \langle w', 
%%
&& \delta_{m + m'-t}^{k + n-r} \left(  \F (v_{1}, x_{1};  \ldots; v_{k}, x_{k}) 
 \cdot_{\epsilon} \F (v'_{1}, y_{1}; \ldots; v'_{n}, y_{n}) \right) %\rangle \right) 
\nn
&&
 \qquad = 
%%
  % R\left( \langle w', 
%%
\left( \delta^{k}_{m} \F (\widetilde{v}_{1}, z_{1};  \ldots; \widetilde{v}_{k}, z_{k}) \right)  
%
%(v_{1}, x_{1};  \ldots; v_{k}, x_{k}) \right) 
%%
\cdot_{\epsilon} \F (\widetilde{v}_{k+1}, z_{k+1}; \ldots; \widetilde{v}_{k+n}, z_{k+n-r})  
\nn
&&
\; + (-1)^k 
%%
%R\left( \langle w', 
%%
\F (\widetilde{v}_{1}, z_{1};  \ldots; \widetilde{v}_{k}, z_{k}) \cdot_{\epsilon}   \left( \delta^{n-r}_{m'-t} 
\F(\widetilde{v}_{1}, z_{k+1}; \ldots; %v_{i_1}, y_{i_1}; \ldots; 
%%
%v'_{j_r}, y_{j_r}; \ldots; 
%%
 \widetilde{v}_{k+n-r}, z_{k+n-r})  \right),  %%\rangle \right).
\nn
&&
\end{eqnarray}
where we use the notation as in \eqref{zsto} and \eqref{notari}. 
\end{proposition}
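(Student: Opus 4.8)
The plan is to verify the Leibniz formula \eqref{leibniz} by reducing the action of the coboundary operator $\delta^{k+n-r}_{m+m'-t}$ on the $\epsilon$-product to the individual coboundary actions, using the explicit sum formula \eqref{marsha} for $\delta$ together with the defining sum \eqref{Z2n_pt_epsss} for the product. First I would write out $\delta^{k+n-r}_{m+m'-t}$ of the product by its three-term definition \eqref{marsha}: the leftmost term $E^{(1)}_W\circ_2(\cdot)$, the alternating sum $\sum_{i=1}^{k+n-r}(-1)^i(\cdot)\circ_i E^{(2)}_{V;\one}$ of internal insertions, and the cyclic last term $(-1)^{k+n-r+1}\sigma_{\dots}(E^{(1)}_W\circ_2(\cdot))$. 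The key structural observation is that in \eqref{Z2n_pt_epsss} the product is a sum over a $V$-basis of a factorized expression: the first $k$ variables $x_1,\dots,x_k$ sit inside the first matrix element $\langle w',Y^W_{WV}(\F(v_1,x_1;\dots;v_k,x_k),\zeta_1)u\rangle$, and the remaining $n-r$ variables sit inside the second matrix element with $\overline u$. Because the two groups of formal parameters live in geometrically separated factors (separated annuli, as in Appendix \ref{sphere}), an internal insertion $\circ_i E^{(2)}_{V;\one}$ at a position $i\le k$ only touches the first factor, and at a position $i>k$ only the second; this splits the alternating sum into two blocks.

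Second I would handle the boundary terms. The term $E^{(1)}_W\circ_2$ inserts $Y_W(v_1,z_1)$ at the very first slot, which lies in the first factor, so it reassembles into $(\delta^k_m\F(\dots))\cdot_\epsilon\F(\dots)$ after recognizing that the first, all-but-last-$i$, and one cyclic term of $\delta^k_m$ are exactly the restriction of the three-term formula to the first block. Symmetrically, the cyclic last term $(-1)^{k+n-r+1}\sigma(E^{(1)}_W\circ_2)$ acts on slot $k+n-r$, which belongs to the second factor, and together with the $i>k$ insertions it reassembles into $\F(\dots)\cdot_\epsilon(\delta^{n-r}_{m'-t}\F(\dots))$; the sign $(-1)^k$ in \eqref{leibniz} is the bookkeeping cost of moving the coboundary past the first $k$ slots, exactly as in the standard tensor-product-of-complexes sign rule. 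The one subtlety is the single insertion $\circ_k E^{(2)}_{V;\one}$ sitting at the junction between the two blocks: I would argue, using the associativity/composability of $Y^W_{WV}$ and the independence of the sum in \eqref{Z2n_pt_epsss} from the basis $\{u\}$ (the "standard reasoning" of \cite{FHL,Zhu} already invoked), that this junction term is absorbed into the $\zeta_1,\zeta_2$ structure and does not produce a cross term — equivalently, after the geometric sewing it corresponds to an internal pole that is removed by $\widehat R$ and hence contributes to neither block spuriously.

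Third, I would check that both sides actually lie in the asserted space $C^{k+n-r+1}_{m+m'-t-1}(V,\W)$ and that the identity holds after rationalization: convergence in $\epsilon$ and the $L_V(-1)$-derivative, $L_V(0)$-conjugation, and shuffle \eqref{shushu} properties are already guaranteed for the product by Proposition \ref{tolsto}, and they are preserved by $\delta$ by Proposition \ref{delta-square} and the propositions of \cite{Huang}; so it suffices to match the two sides as elements of $\hom(V^{\otimes(k+n-r+1)},\W_{z_1,\dots,z_{k+n-r+1}})$, which by Lemma \ref{popa} can be done after fixing any convenient choice of formal parameters. The main obstacle I anticipate is precisely the junction term analysis: making rigorous that the single internal insertion straddling the two tensor blocks does not generate an extra term requires careful use of the geometric sewing picture of Appendix \ref{sphere} and of the composability hypotheses controlling the orders of poles at $z_i=z_j$ across the two groups of parameters. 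Once that is settled, the remaining steps are a disciplined but routine reindexing of the three-term $\delta$-formula and a sign count.
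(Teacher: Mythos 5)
Your overall strategy (expand the three-term formula \eqref{marsha} on the product, split the internal insertions into the block $i\le k$ and the block $i>k$, send the front $E^{(1)}_W\circ_2$ term to the first factor and the cyclic term to the second, and track the sign $(-1)^k$) is the same skeleton as the paper's proof in Appendix \ref{duda}. But there is a genuine gap at exactly the point you flag as a "subtlety" and then wave away. Count the terms: $\delta^{k+n-r}_{m+m'-t}$ on the product produces $(k+n-r)+2$ terms, while $\bigl(\delta^k_m\F\bigr)\cdot_\epsilon\F$ plus $(-1)^k\,\F\cdot_\epsilon\bigl(\delta^{n-r}_{m'-t}\F\bigr)$ together contain $(k+2)+((n-r)+2)$ terms. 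The two extra terms on the right-hand side are the cyclic boundary term of $\delta^k_m$, namely $(-1)^{k+1}\omega_W(\widetilde v_{k+1},z_{k+1})$ applied to $\F(\widetilde v_1,z_1;\ldots;\widetilde v_k,z_k)$ inside $Y^W_{WV}(\,\cdot\,,\zeta_1)u$, and the front boundary term of $\delta^{n-r}_{m'-t}$, namely $\omega_W(\widetilde v_{k+1},z_{k+1})$ applied to the second factor inside $Y^W_{WV}(\,\cdot\,,\zeta_2)\overline u$; neither appears verbatim in the expansion of the left-hand side. Your claim that the junction insertion "is absorbed into the $\zeta_1,\zeta_2$ structure and does not produce a cross term," or is an "internal pole removed by $\widehat R$," is not an argument and misidentifies what has to be proved: the cross-boundary contributions do not vanish, they must be converted into precisely these two missing boundary terms.

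The paper closes this gap by adding and subtracting the two boundary terms and then proving, in the long chain of equalities that occupies the second half of Appendix \ref{duda}, that a module vertex operator attached in front of the $\zeta_1$-factor can be re-attached in front of the $\zeta_2$-factor: one unpacks $Y^W_{WV}$ via the intertwining-operator identity \eqref{wprop}, commutes $\omega_W(\widetilde v_{k+1},z_{k+1})$ past $Y_W(u,-\zeta_1)$ by locality \eqref{porosyataw}, uses the $e^{\zeta L_W(-1)}$ translation to shift the insertion point to $z_{k+1}+\zeta_1-\zeta_2$, and finally exploits the arbitrariness of the extra element $\widetilde v_{k+1}$ and parameter $z_{k+1}$ (they are new variables created by $\delta$) to identify the result with the desired term on the $\zeta_2$-side. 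Without some version of this computation — or another rigorous mechanism producing the two boundary terms with the correct signs — your reindexing and sign count cannot close, so as written the proposal does not establish \eqref{leibniz}.
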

Appendix \ref{duda} contains the proof of this Proposition.  
%%
%%%%%%%%%%%%%%%%%%%%%%%%%%%%%%%%%%%%%%%%%%%%%%%%%%%%%%%%%%%%%%%%%%%%%%
\begin{remark}
Checking %Definition 
\eqref{hatdelta} we see that an extra arbitrary vertex algebra element $v_{n+1} \in V$, as well as corresponding 
 extra arbitrary formal parameter $z_{n+1}$ appear as a result of the action of $\delta^{n}_m$ on 
$\F \in C^n_m(V, \W)$ mapping it to $C^{n+1}_{m-1}(V, \W)$. 
In application to the $\epsilon$-product \eqref{Z2n_pt_epsss} these extra arbitrary elements are involved in the 
definition of the action of $\delta_{m + m'-t}^{k + n-r}$ on 
 $\F (v_{1}, x_{1};  \ldots; v_{k}, x_{k}) 
 \cdot_{\epsilon} \F (v'_{1}, y_{1}; \ldots; v'_{n}, y_{n})$.  
\end{remark}
Note that both sides of \eqref{leibniz} belong to the space 
$C_{m + m'-t + 1}^{n + n' -r +1}(V, W)$. 
The co-boundary operators $\delta^n_m$ and $\delta^{n'}_{m'}$
 in \eqref{leibniz} do not include the number of common vertex algebra elements 
(and formal parameters), neither the number of common vertex operators corresponding mappings composable with.
 The dependence on common vertex algebra elements, parameters, and composable vertex operators is taken into 
account in mappings multiplying the action of co-boundary operators on $\Phi$.

%%%%%%%%%%%%%%%%%%%%%%%%%%%%%%%%%%%%%%%%%%%%%%%%%%%%%%%%%%%%%%%%%%%
%%%%%%%%%%%%%%%%%%%%%%%%%%%%%%%%%%%%%%%%%%%%%%%%%%%%%%%%%%%%%%%%%%%
%%
Finally, we have the following 
%%
%%%%%%%%%%%%%%%%%%%%%%%%%%%%%%%%%%%%%%%%%%%%%%%%%%%%%%%%%%%%%%%%%%%%%%%%%%%%%%
%%
\begin{corollary}
The multiplication \eqref{Z2n_pt_epsss} extends the chain-cochain 
complex  %\eqref{conde}--\eqref{hat-complex} 
structure of Proposition \ref{delta-square} to all products $C^k_m(V, \W) \times C^{n}_{m'}(V, \W)$, 
$k$, $n \ge0$, $m$, $m' \ge0$.  
%%
 %products of double complex spaces.    
%%
\hfill $\qed$
\end{corollary}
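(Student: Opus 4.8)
The plan is to deduce the corollary from three facts already established: that the $\epsilon$-product is closed on the double complex spaces (Proposition~\ref{tolsto}), that $\delta$ obeys the Leibniz rule for $\cdot_\epsilon$ (Proposition~\ref{tosya}), and that $\delta\circ\delta=0$ on each single factor (Proposition~\ref{delta-square}). Put differently, I would show that $\bigl(\bigoplus_{k,m\ge 0}C^k_m(V,\W),\ \cdot_\epsilon,\ \delta\bigr)$ is a bigraded differential algebra whose underlying cochain complex is that of \eqref{shortseq}: by Proposition~\ref{tolsto} the map $\cdot_\epsilon\colon C^k_m(V,\W)\times C^n_{m'}(V,\W)\to C^{k+n-r}_{m+m'-t}(V,\W)$ is bilinear, so the coboundary operator $\delta^{k+n-r}_{m+m'-t}$ of \eqref{hatdelta} acts on its image; by Proposition~\ref{tosya} the operator $\delta$ is a derivation of $\cdot_\epsilon$ of bidegree $(1,-1)$ with Koszul sign $(-1)^{k}$; and it remains only to check that these two structures are compatible with $\delta^2=0$.

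For that compatibility I would iterate the Leibniz formula \eqref{leibniz}. First one checks that the two summands on its right-hand side again lie in double complex spaces on which a coboundary operator is defined: since $\delta^k_m\F_1\in C^{k+1}_{m-1}(V,\W)$ and $\delta^n_{m'}\F_2\in C^{n+1}_{m'-1}(V,\W)$, Proposition~\ref{tolsto} places both $(\delta\F_1)\cdot_\epsilon\F_2$ and $\F_1\cdot_\epsilon(\delta\F_2)$ in $C^{k+n-r+1}_{m+m'-t-1}(V,\W)$. Applying \eqref{leibniz} a second time to each summand — the first factors now carrying cohomological degrees $k+1$ and $k$, respectively — gives
\[
\delta^2(\F_1\cdot_\epsilon\F_2)=(\delta^2\F_1)\cdot_\epsilon\F_2+\bigl[(-1)^{k+1}+(-1)^{k}\bigr]\,(\delta\F_1)\cdot_\epsilon(\delta\F_2)+\F_1\cdot_\epsilon(\delta^2\F_2).
\]
The cross term vanishes since $(-1)^{k+1}+(-1)^{k}=0$, and the two outer terms vanish by Proposition~\ref{delta-square}, which supplies $\delta^{n+1}_{m-1}\circ\delta^n_m=0$ together with ${\delta}^2_{1/2}\circ{\delta}^1_2=0$ for the boundary bidegree of \eqref{shortseq}. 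Hence $\delta\circ\delta=0$ on every product $C^k_m(V,\W)\times C^n_{m'}(V,\W)$, which, together with the Leibniz law, is precisely the assertion that the chain--cochain complex structure of Proposition~\ref{delta-square} extends to all such products.

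The only step I expect to require genuine care is the two-dimensional grading bookkeeping. After subtracting the $r$ common formal parameters and the $t$ common composable vertex operators, and then shifting the bidegree by $(1,-1)$ once or twice, one must verify that the result still sits in the range where the coboundary operators \eqref{hatdelta} and \eqref{halfdelta} are defined — in particular it must pass correctly through the half-integer stage $C^2_{1/2}(V,\W)$ of the short sequence. This is exactly what Proposition~\ref{tolsto}, the inclusions $C^n_m(V,\W)\subset C^n_{m-1}(V,\W)$ and $C^2_m(V,\W)\subset C^2_{1/2}(V,\W)$, and the shape of \eqref{shortseq} guarantee; granting it, the remainder of the argument is the sign identity displayed above, which is routine.
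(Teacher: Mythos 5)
Your argument is correct and follows exactly the route the paper intends: the corollary is stated there without proof, as an immediate consequence of Proposition~\ref{tolsto} (closure of $\cdot_\epsilon$ on the double complex spaces), Proposition~\ref{tosya} (the Leibniz rule), and Proposition~\ref{delta-square} ($\delta\circ\delta=0$), which are precisely the three ingredients you combine. Your explicit double application of \eqref{leibniz} with the sign cancellation $(-1)^{k+1}+(-1)^{k}=0$, together with the bidegree bookkeeping through $C^{2}_{\frac{1}{2}}(V,\W)$ and the inclusions $C^{n}_{m}(V,\W)\subset C^{n}_{m-1}(V,\W)$, simply makes explicit what the paper leaves implicit.
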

%%
%%%%%%%%%%%%%%%%%%%%%%%%%%%%%%%%%%%%%%%%%%%%%%%%%%%%%%%%%%%%%%%%%%%%%%%%%%%%%%%%%%%%%%%%%%%%%
%%%%%%%%%%%%%%%%%%%%%%%%%%%%%%%%%%%%%%%%%%%%%%%%%%%%%%%%%%%%%%%%%%%%%%%%%%%%%%%%%%%%%%%%%%%%%
%%
%%
%In this subsection we formulate another Lemma related to the product \eqref{}. 
%%
\begin{corollary}
The product \eqref{Z2n_pt_epsss} and the product operator \eqref{hatdelta}  
endow the space $C^k_m(V, \W)$ $\times$ $ C^n_m(V, \W)$, $k$, $n \ge0$, $m$, $m' \ge0$, %$n\ge 0$, $m\ge 0$
 with the structure of a %double
bi-graded differential algebra $\mathcal G(V, \W, \cdot_\epsilon, \delta^{k+n-r}_{m+m'-t})$. 
\hfill $\qed$
\end{corollary}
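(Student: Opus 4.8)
The final Corollary asserts that the pair $(C^{k}_{m}(V,\W)\times C^{n}_{m'}(V,\W),\ \cdot_\epsilon,\ \delta)$ carries the structure of a bi-graded differential algebra. The strategy is to verify, one by one, the defining axioms of a bi-graded differential algebra, each of which has already been established (or is immediate) from the preceding results: (i) the grading, (ii) closure of the product under the grading, (iii) bilinearity and associativity of $\cdot_\epsilon$, (iv) the differential squares to zero, and (v) the graded Leibniz rule. I would organize the proof as an enumeration of these five points.

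First I would fix the bi-grading: to the space $C^{n}_{m}(V,\W)$ assign the bidegree $(n,m)$, and declare $\mathcal G(V,\W) = \bigoplus_{n\ge 0,\ m\ge 0} C^{n}_{m}(V,\W)$ (together with the auxiliary piece $C^{2}_{1/2}$ where needed for $\delta^{2}_{1/2}$). Closure under multiplication is exactly Proposition~\ref{tolsto}: the map $\cdot_\epsilon$ sends $C^{k}_{m}\times C^{n}_{m'}$ into $C^{k+n-r}_{m+m'-t}$, so the product respects the bi-grading up to the shifts by $r$ (common formal parameters) and $t$ (common composable vertex operators) recorded in \eqref{toporno}. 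Bilinearity of $\cdot_\epsilon$ is visible from the defining formula \eqref{Z2n_pt_epsss}, since the right-hand side is linear in each matrix-element factor; independence of the construction from the choice of $V_l$-basis $\{u\}$ (noted after the Definition, by the standard argument of \cite{FHL,Zhu}) guarantees it is well defined. Associativity I would reduce to the geometric interpretation: iterating $\cdot_\epsilon$ corresponds to iterated sphere-sewing, and the resulting sphere — hence the rational form attached to it — does not depend on the order in which the three spheres are sewn; alternatively one checks it directly on the sum-over-intermediate-states formula \eqref{Z2_part}, where reassociation is the usual resolution-of-identity manipulation.

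Next I would record that $\delta$ is a differential on $\mathcal G$: this is Proposition~\ref{delta-square}, $\delta^{n+1}_{m-1}\circ\delta^{n}_{m}=0$ and $\delta^{2}_{1/2}\circ\delta^{1}_{2}=0$, now read as the statement $\delta^2=0$ on the whole bi-graded space; and by Proposition~\ref{tolsto} together with \eqref{hatdelta} the operator $\delta^{k+n-r}_{m+m'-t}$ is actually defined on the product space, so the differential is compatible with the algebra structure. Finally, the graded Leibniz rule is precisely Proposition~\ref{tosya}, equation \eqref{leibniz}, with sign $(-1)^{k}$ determined by the first (formal-parameter) degree — so $\delta$ is a graded derivation of degree $(1,-1)$ for $\cdot_\epsilon$. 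Assembling (i)–(v) gives the claim, and the differential graded algebra so obtained is the object denoted $\mathcal G(V,\W,\cdot_\epsilon,\delta^{k+n-r}_{m+m'-t})$; the preceding Corollary (extension of the chain–cochain complex structure to all products) shows the construction is consistent across all bidegrees.

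**Main obstacle.** The one point not literally handed to us by an earlier numbered result is \emph{associativity} of $\cdot_\epsilon$ — the earlier propositions address closure, Leibniz, and $\delta^2=0$, but not the associative law. I expect this to be the crux: it requires either a careful three-sphere sewing argument showing the triple product is symmetric in how the annuli are glued (invoking the associativity of the sewing operation from Appendix~\ref{sphere} and \cite{Y}), or a direct check that the double sum $\sum_{u,u'}$ obtained from $(\F_1\cdot_\epsilon\F_2)\cdot_\epsilon\F_3$ and from $\F_1\cdot_\epsilon(\F_2\cdot_\epsilon\F_3)$ agree after the basis-independent contraction. A secondary subtlety is bookkeeping the shift parameters $r,t$: strictly speaking the product is graded only up to these data, so one should either treat $(r,t)$ as part of the structure or restrict to the generic case $r=t=0$ where $\mathcal G$ is honestly bi-graded; I would state this explicitly so that the "bi-graded differential algebra" claim is unambiguous.
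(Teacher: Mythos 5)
Your proposal is correct and takes essentially the same route as the paper: the corollary is stated there with no written proof at all, being treated as an immediate assembly of Proposition~\ref{tolsto} (closure of $\cdot_\epsilon$ in the bi-graded family $C^{k+n-r}_{m+m'-t}(V,\W)$), Proposition~\ref{delta-square} ($\delta^{n+1}_{m-1}\circ\delta^{n}_{m}=0$), and Proposition~\ref{tosya} (the Leibniz rule \eqref{leibniz}), which are exactly the ingredients you enumerate. The two points you flag as unresolved --- associativity of $\cdot_\epsilon$ and the bookkeeping of the shifts $(r,t)$ in the grading --- are not addressed anywhere in the paper either, so your caution there goes beyond, rather than against, the paper's own (unwritten) argument.
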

%%
%\bigskip 
%
%%%%%%%%%%%%%%%%%%%%%%%%%%%%%%%%%%%%%%%%%%%%%%%%%%%%%%%%%%%%%%%%%%%%%%%%%%%%%%%%%%%%%%%%%%%%%%
%%
For elements of the spaces $C^2_{ex}(V, \W)$ % of the exceptional double complex, and   
we have the following %\cite{Zu} %Proposition
%%
%%%%%%%%%%%%%%%%%%%%%%%%%%%%%%%%%%%%%%%%%%%%%%%%%%%%%%%%%%%%%%%%%%%%%%%%%%%%%%%%%%%%
\begin{corollary}
%%
 %The product of the elements of the spaces $C^n_m(V, \W)$ extends to the subspace $C^2_{ex}(V, \W)$ by 
%% 
%\eqref{}. 
%%
%It satisfies $\sigma$-symmetry \eqref{}, $L{(0)}$-conjugation \eqref{}, and $L{(-1)}$-derivative \eqref{} 
%%
%properties. It is invariant with respect to changes of ${\rm Aut}_{z_1, z_2, z_3} \; \Oo^{(3)}$ of formal parameters
%%
%%
The product of elements of the spaces $C^{2}_{ex} (V, \W)$ and  $C^n_{m} (V, \W)$ is given by 
\eqref{Z2n_pt_epsss}, %and 
%%
%%%%%%%%%%%%%%%%%%%%%%%%%%%%%%%%%%%%%%%%%%%%%%%%%%%%%%%%%%%%%%%%%%%%%%%%%%%%%%%%%%%% 
%%
\begin{equation}
\label{pupa3}
\cdot_\epsilon: C^{2}_{ex} (V, \W) \times C^n_{m} (V, \W) \to C^{n+2-r}_{m} (V, \W),  
\end{equation}
and, in particular, 
\[
\cdot_\epsilon: C^{2}_{ex} (V, \W) \times C^{2}_{ex} (V, \W) \to C^{4-r}_{0} (V, \W).  
\]
%%
%contains elements $\F$ composable with zero number of vertex operators.  
%%
%and 
%%
%% vse to zhe samoe, no usloviya na \F drugie v smysle composable 
\hfill $\square$
\end{corollary}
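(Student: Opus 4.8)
The plan is to derive the corollary from Proposition~\ref{tolsto} by pinning down the correct values of the parameters $r$, $t$ and of the two levels, using that $C^2_{ex}(V,\W)\subset C^2_0(V,\W)$. Concretely, recall that $C^2_{ex}(V,\W)$ is the subspace of $C^2_0(V,\W)$ cut out by the two absolute-convergence-and-rationality conditions that make the co-boundary operator \eqref{halfdelta} defined, and that $C^2_m(V,\W)\subset C^2_{ex}(V,\W)\subset C^2_0(V,\W)$ for every $m\in\Z_+$. The first thing I would check is that these extra convergence conditions are exactly what is needed for the defining sum \eqref{Z2n_pt_epsss} of the $\epsilon$-product to make sense when the first factor lies in $C^2_{ex}(V,\W)$: they supply the composability of $\F$ with the $V$-module vertex operators $Y^W_{WV}(\,\cdot\,,\zeta_1)$ occurring there, so that $\cdot_\epsilon$ is well defined on $C^2_{ex}(V,\W)\times C^n_m(V,\W)$.

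Next I would run the output analysis of Proposition~\ref{tolsto} without change. The product \eqref{Z2n_pt_epsss} is again a sum of matrix elements of ordinary $V$-module vertex operators acting on $\W$-valued forms, so by the geometric sewing of Appendix~\ref{sphere} it is a $\W_{z_1,\dots,z_{n+2-r}}$-valued rational form whose only poles are on the diagonals $z_i=z_j$, and it satisfies the shuffle symmetry \eqref{shushu}, the $L_V(0)$-conjugation property \eqref{loconj} and the $L_V(-1)$-derivative property \eqref{lder1}; moreover, by the composability estimates of Definition~\ref{mapco}, it is composable with $m_1+m_2-t$ vertex operators, where $m_1$ is the level of the first factor, $m_2$ that of the second, and $t$ the number of vertex operators both factors are composable with. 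Since $C^2_{ex}(V,\W)\subset C^2_0(V,\W)$ we have $m_1=0$, hence $t=0$, so the level of the product is $0+m-0=m$; this yields the map \eqref{pupa3}. For the displayed special case I would take both factors in $C^2_{ex}(V,\W)$, so $n=2$, $m_1=m_2=0$, $t=0$, which reads off as $\cdot_\epsilon:C^2_{ex}(V,\W)\times C^2_{ex}(V,\W)\to C^{4-r}_0(V,\W)$.

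The main obstacle, and where I expect the real work to be, is the first step: showing that the conditions defining $C^2_{ex}(V,\W)$, rather than merely the generic composability bounds $N^n_m(v_i,v_j)$ of Definition~\ref{mapco}, are enough to guarantee both the absolute convergence of \eqref{Z2n_pt_epsss} in $\epsilon$ and the rationality of its two matrix-element factors once the sewing coordinates $\zeta_1,\zeta_2$ have been identified as in Appendix~\ref{sphere}, keeping the poles at $z_i=0$ and $z_i=z_j$ under explicit control. Once this is settled, the remainder is the same contour-and-expansion bookkeeping already carried out for Proposition~\ref{tolsto} and for the Leibniz rule in Appendix~\ref{duda}, specialized to $k=2$ and to a level-$0$ first factor.
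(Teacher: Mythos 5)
Your proposal is correct and follows essentially the same route the paper intends: the corollary is just Proposition~\ref{tolsto} specialized to $k=2$ with the $C^{2}_{ex}$ factor (the subspace $C^{2}_{\frac{1}{2}}(V,\W)\subset C^{2}_{0}(V,\W)$) treated at level zero, so $t=0$ and the output lands in $C^{n+2-r}_{m}(V,\W)$, resp.\ $C^{4-r}_{0}(V,\W)$. The paper supplies no further argument than this parameter bookkeeping, so the convergence check you flag for the $C^{2}_{ex}$-conditions is a reasonable extra precaution rather than a divergence from the paper's proof.
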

%%
%%
%%
%%%%%%%%%%%%%%%%%%%%%%%%%%%%%%%%%%%%%%%%%%%%%%%%%%%%%%%%%%%%%%%%%%%%%%%%%%%%%%%%%%%%%%%%%%%%%%%%%%%%%%%%%%%
%%%%%%%%%%%%%%%%%%%%%%%%%%%%%%%%%%%%%%%%%%%%%%%%%%%%%%%%%%%%%%%%%%%%%%%%%%%%%%%%%%%%%%%%%%%%%%%%%%%%%%%%%%%
\subsection{The commutator} 
%
%%%%%%%%%%%%%%%%%%%%%%%%%%%%%%%%%%%%%%%%%%%%%%%%%%%%%%%%%%%%%%%%%%%%%%%%%%%%%%%%%%%%%%%%%%%%%%%%%%%%%%%%
%%
Let us consider the mappings 
%%
%\[
$\Phi(v_1, z_1 $ ; $  \ldots $; $ v_{n}, z_k)$ $\in$   $C_{m}^{k}(V, \W)$, and  
%\] 
%%
%%
%\[
$\Psi (v_{k+1}, z_{k+1}; % \otimes 
\ldots; 
%%
 %5\otimes
%%
 v_{k+n}, z_{k+n}) \in   C_{m'}^{n}(V, \W)$,  
%\]     
%%
 with have $r$ common vertex algebra elements (and, correspondingly, $r$ formal variables), and 
 $t$ common vertex operators mappings $\Phi$ and $\Psi$ are composable with. 
Note that when applying the co-boundary operators \eqref{marsha} and \eqref{halfdelta1} to a map 
$\Phi(v_1, z_1; %\otimes
 \ldots; % \otimes 
v_n, z_n) %(z_1, \ldots, z_n) 
\in C^n_m(V, \W)$, 
\[
\delta^n_m: \Phi(v_1, z_1; %\otimes 
\ldots; % \otimes
 v_n,z_n)%(z_1, \ldots, z_n) 
\to 
\Phi(v'_1, z'_1; %\otimes
 \ldots; %\otimes 
v'_{n+1}, z'_{n+1}) %(z'_1, \ldots, z'_{n+1}) 
\in C^{n+1}_{m-1}(V, \W),
\]
 one does not necessary assume that we keep 
 the same set of vertex algebra elements/formal parameters and 
vertex operators composable with for $\delta^n_m \Phi$, 
though it might happen that some of them could be common with $\Phi$.    
%%

%%%%%%%%%%%%%%%%%%%%%%%%%%%%%%%%%%%%%%%%%%%%%%%%%%%%%%%%%%%%%%%%%%%%%%%%%%%%%%%%%%%%%%%%%%%%%%%%
%%
Let us define an extra product (related to the $\epsilon$-product) 
the product of $\Phi$ and $\Psi$, %naturally coming from the definition of the spaces  
%%
%$C_{m}^{n}(V, W)$,
%5
 %as 
%%
 \begin{eqnarray}
%%
%\label{defproduct}
%%
&&
 \Phi \cdot \Psi: V^{\otimes(k +n-r)} \to  \W_{z_1, \ldots, z_{k+ n-r}}, \; 
\\
\label{product}
&&
 \Phi \cdot \Psi = \left[\Phi,_{\cdot \epsilon} \Psi\right]= \Phi \cdot_\epsilon \Psi- \Psi \cdot_\epsilon \Phi,    
\end{eqnarray}
where brackets denote ordinary commutator in $\W_{z_1, \ldots, z_{k+ n-r}}$.  
%%
%%%%%%%%%%%%%%%%%%%%%%%%%%%%%%%%%%%%%%%%%%%%%%%%%%%%%%%%%%%%%%%%%%%%%%%%%%%%%%%%%%%%%%%%%%%%%%%
Due to the properties of the maps $\Phi\in C_{m}^{k}(V, \W)$ and 
$\Psi\in   C_{m'}^{n}(V, \W)$, the map $(\Phi \cdot_\epsilon \Psi)$
belongs to the space $C_{m + m'- t }^{k +n-r}(V, \W)$.  
For $k=n$ and 
\[
\Psi (v_{n+1}, z_{n+1}; %\otimes
 \ldots; %\otimes 
v_{ 2n}, z_{2n}) %(z_{n+1}, \ldots, z_{2n}) 
= 
\Phi(v_{1}, z_1; %\otimes
 \ldots; %\otimes 
v_{ n}, z_n), %(z_1, \ldots, z_{n}),  
\]
we obtain from \eqref{product} and \eqref{Z2n_pt_epsss} that 
\begin{eqnarray}
\label{fifi}
\Phi(v_{1}, z_1; %\otimes
 \ldots;  %\otimes 
v_{ n}, z_n) %(z_1, \ldots, z_{n}) 
\cdot
 \Phi(v_{1}, z_1; %\otimes 
\ldots;  %\otimes
 v_{ n}, z_n) %(z_1, \ldots, z_{n})
=0. 
\end{eqnarray}  
%%
%%%%%%%%%%%%%%%%%%%%%%%%%%%%%%%%%%%%%%%%%%%%%%%%%%%%%%%%%%%%%%%%%%%%%%%%%%%%%%%%%%%%%%%%%%%%%%%%%%%%
%%%%%%%%%%%%%%%%%%%%%%%%%%%%%%%%%%%%%%%%%%%%%%%%%%%%%%%%%%%%%%%%%%%%%%%%%%%%%%%%%%%%%%%%%%%%%%%%%%%%
\section{The invariants}
\label{invariants}
In this section we provide the main result of the paper by deriving 
the simplest cohomological invariants associated to the short double complex \eqref{halfdelta} 
for a grading-restricted vertex algebra.   

Let us give first some further definitions. 
In this section we skip the dependence on vertex algebra elements and formal parameters in notations for 
 elements of $C_{n}^{m}(V, \W)$.  
%%
%%%%%%%%%%%%%%%%%%%%%%%%%%%%%%%%%%%%%%%%%%%%%%%%%%%%%%%%%%%%%%%%%%%%%%%%%%%%%%%%%%%%%%%%%%%
\begin{definition}
In analogy with differential forms, we call a map 
$\Phi \in   C_{m}^{n}(V, \W)$ closed if
\[
\delta^{n}_{m} \Phi=0.
\]  
For $m \ge 1$, we call it exact if there exists $\Psi \in C_{m-1}^{n+1}(V, \W)$ 
such that 
\[
\Psi=\delta^{n}_{m} \Phi.
\]   
\end{definition}
\begin{definition}
For $\Phi \in  {C}^{n}_{m}(V, \W)$ we call the cohomology class of mappings 
 $\left[ \Phi \right]$ the set of all closed forms that differs from $\Phi$ by an 
exact mapping, i.e., for $\chi \in  {C}^{n-1}_{m+1}$, 
\[
\left[ \Phi \right]= \Phi + \delta^{n-1}_{m+1} \chi, 
\]
 (we assume that both parts of the last formula belongs to the same space ${C}^{n}_{m}(V, \W)$).   
\end{definition}
Under a natural extra condition, the short double complex \eqref{shortseq} allows us to establish relations 
among elements of double complex spaces. 
In particular, we require that for a pair of double complex spaces $C^{n_1}_{k_1}(V, \W)$ and  
$C^{n_2}_{k_2}(V, \W)$ there exist subspaces 
${C'}^{n_1}_{k_1}(V, \W) \subset C^{n_1}_{k_1}(V, \W)$ and 
${C'}^{n_2}_{k_2}(V, \W) \subset C^{n_2}_{k_2}(V, \W)$ such that for $\Phi_1 \in {C'}^{n_1}_{k_1}(V, \W)$ and 
$\Phi_2 \in C'^{n_2}_{k_2}(V, \W)$, 
\begin{equation}
\label{ortho}
\Phi_1 \cdot \delta^{n_2}_{k_2} \Phi_2=0,  
\end{equation}
 namely,  
$\Phi_1$ supposed to be orthogonal to $\delta^{n_2}_{k_2} \Phi_2$ 
(i.e., commutative with respect to the product \eqref{product}). 
We call this {\it  the orthogonality condition} for 
mappings and actions of co-boundary operators for a double complex. 
It is easy to see that  
the assumption to belong to the same double complex space for both sides of the equations following from the orthogonality
 condition applies the bi-grading condition on double complex spaces. 
Note that in the case of differential forms considered on a smooth manifold, 
the Frobenius theorem for a distribution provides the orthogonality condition.   
In this Section we derive algebraic relations occurring from the orthogonality condition on 
the short double complex \eqref{shortseq}. 
We formulate 
%%
%%%%%%%%%%%%%%%%%%%%%%%%%%%%%%%%%%%%%%%%%%%%%%%%%%%%%%%%%%%%%%%%%%%%%%%%%%%%%%%%%%%%%%%%%%%%%%%%%%%%%%%%%%%%%%
\begin{proposition}
The orthogonality condition for the short double complex sequence \eqref{shortseq} determines the cohomological classes: 
\begin{equation}
\label{stupor}
\left[\left(\delta^{1}_{2} \Phi \right)\cdot \Phi \right], \; 
\left[\left(\delta^{0}_{3} \chi \right)\cdot \chi \right], \;  
\left[\left(\delta^{1}_{t} \alpha \right)\cdot \alpha \right],
\end{equation}
 for $0 \le t \le 2$, with 
non-vanishing $\left(\delta^{1}_{2} \Phi \right)\cdot \Phi$,
$\left(\delta^{0}_{3} \chi \right)\cdot \chi$, and 
$\left(\delta^{1}_{t} \alpha \right)\cdot \alpha$.  
 These classes are independent on the choice of $\Phi \in C^{1}_{2}(V, \W)$, $\chi \in C^{0}_{3}(V, \W)$, and
  $\alpha \in C^1_t(V, \W)$. 
\end{proposition}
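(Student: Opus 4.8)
The plan is to establish three things in sequence: first, that each of the three products in \eqref{stupor} is a closed element of the appropriate double complex space; second, that each is genuinely non-vanishing under the stated orthogonality hypotheses; and third, that the cohomology class represented does not change when the representing map ($\Phi$, $\chi$, or $\alpha$) is replaced by another admissible map. I will treat the case $\left[\left(\delta^{1}_{2}\Phi\right)\cdot\Phi\right]$ in detail, since the other two are structurally identical (only the bi-grading labels differ), and then indicate the trivial modifications for $\chi\in C^0_3(V,\W)$ and $\alpha\in C^1_t(V,\W)$ with $0\le t\le 2$.

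\textbf{Step 1: closedness.} I would apply the Leibniz formula of Proposition \ref{tosya} to the product $\left(\delta^{1}_{2}\Phi\right)\cdot_\epsilon\Phi$. Writing $\Psi=\delta^1_2\Phi\in C^2_1(V,\W)$, the variation of Leibniz law gives
\[
\delta\bigl(\Psi\cdot_\epsilon\Phi\bigr)=\bigl(\delta\Psi\bigr)\cdot_\epsilon\Phi+(-1)^{2}\,\Psi\cdot_\epsilon\bigl(\delta\Phi\bigr).
\]
The first term vanishes by Proposition \ref{delta-square}, since $\delta\Psi=\delta^{2}_{1}\delta^{1}_{2}\Phi=0$. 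The second term is $\Psi\cdot_\epsilon\delta^1_2\Phi=\bigl(\delta^1_2\Phi\bigr)\cdot_\epsilon\bigl(\delta^1_2\Phi\bigr)$, which vanishes by \eqref{fifi} (the self-product of a map with itself is zero, being a commutator). Passing from the $\epsilon$-product $\cdot_\epsilon$ to the commutator product $\cdot=[\,\cdot_\epsilon\,]$ of \eqref{product} is compatible with $\delta$ by linearity, so $\delta\bigl(\left(\delta^1_2\Phi\right)\cdot\Phi\bigr)=0$. Hence $\left(\delta^1_2\Phi\right)\cdot\Phi$ is closed and its cohomology class is well defined.

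\textbf{Step 2: non-vanishing and Step 3: independence of the representative.} For non-vanishing, I would invoke the orthogonality condition \eqref{ortho}: the hypothesis selects the subspace ${C'}^1_2(V,\W)$ on which $\Phi_1\cdot\delta^{n_2}_{k_2}\Phi_2$ controls exactly which products survive, and the Proposition explicitly restricts attention to those $\Phi$ for which $\left(\delta^1_2\Phi\right)\cdot\Phi$ is non-zero; so this is a hypothesis to be carried, not proved, and I only need to check consistency with the bi-grading (both sides land in $C^{3}_{2}(V,\W)$ by Proposition \ref{tolsto} with the relevant $r,t$). For independence of the representative, replace $\Phi$ by $\Phi'=\Phi+\delta^0_3\chi$ for $\chi\in C^0_3(V,\W)$ (so $[\Phi']=[\Phi]$). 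Then by bilinearity of $\cdot$ and by $\delta^1_2\delta^0_3=0$,
\[
\bigl(\delta^1_2\Phi'\bigr)\cdot\Phi'=\bigl(\delta^1_2\Phi\bigr)\cdot\Phi+\bigl(\delta^1_2\Phi\bigr)\cdot\bigl(\delta^0_3\chi\bigr),
\]
and I must show the correction term $\bigl(\delta^1_2\Phi\bigr)\cdot\bigl(\delta^0_3\chi\bigr)$ is exact. Here the orthogonality condition enters essentially: it forces $\bigl(\delta^1_2\Phi\bigr)\cdot\bigl(\delta^0_3\chi\bigr)=0$ (the first factor is of the form $\delta(\cdot)$ and orthogonality makes it commute with, hence annihilate under $\cdot$, the second factor which is also of the form $\delta(\cdot)$), or at worst realizes it as $\delta$ of the mixed product $\Phi\cdot\chi$ via Proposition \ref{tosya} applied backwards. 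Either way the class is unchanged. The identical argument with $(\delta^0_3,\delta^1_2)\leadsto(\delta^1_2,\delta^2_{1/2})$ handles $\chi$, and with the family $\delta^1_t$, $0\le t\le 2$, using the inclusions $C^2_m\subset C^2_{1/2}$ from the excerpt, handles $\alpha$.

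\textbf{Main obstacle.} The delicate point is Step 3, the exactness of the correction term, because it is the only place where the orthogonality condition \eqref{ortho} must be used in a genuinely non-formal way: I must ensure that the subspaces ${C'}^{n}_{k}(V,\W)$ are chosen compatibly so that both $\delta^1_2\Phi$ and $\delta^0_3\chi$ lie in the orthogonal pair, and that the bi-grading bookkeeping (the shifts $r$ in the upper index and $t$ in the lower index coming from common formal parameters and common composable vertex operators in Proposition \ref{tolsto}) is consistent — i.e.\ that the correction term and the original product actually live in the same space $C^3_2(V,\W)$, as the definition of cohomology class demands. Verifying this compatibility, rather than the formal $\delta^2=0$ cancellations, is where the real work lies.
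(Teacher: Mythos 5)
Your Step 1 (closedness of $(\delta^1_2\Phi)\cdot\Phi$ via the Leibniz formula \eqref{leibniz} and the antisymmetry \eqref{fifi} of the commutator product \eqref{product}) is sound and in fact makes explicit a check the paper leaves implicit; but the rest of the proposal diverges from what the proposition asserts and from how the orthogonality condition is actually used, and the divergences are gaps rather than an alternative route. First, the paper's argument begins by extracting from the orthogonality hypothesis \eqref{isxco} the existence of a map $\alpha$ with $\delta^{0}_{3}\chi=\Phi\cdot\alpha$ as in \eqref{uravnenie}; the bi-grading bookkeeping ($1=1+n-r$, $2=2+m-t$) then forces $\alpha\in C^{1}_{t}(V,\W)$ with $0\le t\le 2$. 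This is precisely the sense in which the orthogonality condition ``determines'' the three classes in \eqref{stupor}, and it is the only place the parameter $t$ and the third class come from. Your proposal never produces $\alpha$ or this relation, so the third class and the range of $t$ are left unexplained. Second, the non-vanishing of $(\delta^{1}_{2}\Phi)\cdot\Phi$, $(\delta^{0}_{3}\chi)\cdot\chi$, $(\delta^{1}_{t}\alpha)\cdot\alpha$ is part of the assertion, and the paper argues it (by contradiction: vanishing would force a factorization $\delta^{1}_{2}\Phi=\gamma\cdot\Phi$ incompatible with the bi-grading of the two sides); declaring it ``a hypothesis to be carried, not proved'' changes the statement instead of proving it.

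Third, the independence claimed in the proposition is independence of the choice of $\Phi\in C^{1}_{2}(V,\W)$ (and of $\chi$, $\alpha$), not merely invariance under adding an exact term. The paper substitutes $\Phi\mapsto\Phi+\eta$ for an arbitrary $\eta$ with $\Phi+\eta\in C^{1}_{2}(V,\W)$ and shows in \eqref{pokaz} that the cross terms cancel because the product is a commutator, so that $\Phi\cdot\delta^{1}_{2}\eta+(\delta^{1}_{2}\eta)\cdot\Phi=0$; no orthogonality is needed at this point. Your Step 3 treats only $\eta=\delta^{0}_{3}\chi$, and even in that special case the decisive step --- exactness or vanishing of the correction term $(\delta^{1}_{2}\Phi)\cdot(\delta^{0}_{3}\chi)$ --- is not established: the orthogonality condition \eqref{ortho} applies to pairs taken from the chosen subspaces ${C'}^{n}_{k}(V,\W)$, and nothing guarantees that $\delta^{1}_{2}\Phi$ lies in such a subspace, while the fallback of ``realizing it as $\delta$ of $\Phi\cdot\chi$'' fails because $\delta(\Phi\cdot\chi)$ contains the additional term $(\delta^{1}_{2}\Phi)\cdot\chi\pm\Phi\cdot\delta^{0}_{3}\chi$ rather than only the cross term you need. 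You identify this as the main obstacle yourself, but it is exactly the step requiring proof; the paper's cancellation-by-antisymmetry (together with the derived relation $\delta^{1}_{2}\Phi\cdot\alpha=\Phi\cdot\delta^{1}_{t}\alpha$) is the mechanism that closes it.
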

\begin{remark}
 A cohomology class with vanishing $\left(\delta^{1}_{2} \Phi \right)\cdot \Phi  \cdot \alpha$ is given by 
$\left[ \left(\delta^{1}_{2} \Phi \right)\cdot \Phi  \cdot \alpha \right]$. 
\end{remark}
\begin{proof}
%%
%%%%%%%%%%%%%%%%%%%%%%%%%%%%%%%%%%%%%%%%%%%%%%%%%%%%%%%%%%%%%%%%%%%%%%%%%%%%%%%%%%%%%%%%%%%%%%%%%%%%%%%%%%%%%%
%%
 Let us consider two maps $\chi \in C^{0}_{3}(V, \W)$, $\Phi \in C^{1}_{2}(V, \W)$.
We require them to be orthogonal, i.e., 
\begin{equation}
\label{isxco}
\Phi \cdot \delta^{0}_{3} \chi=0.   
\end{equation}
Thus, there exists $\alpha \in C^{n}_{m}(V, \W)$, such that 
\begin{equation}
\label{uravnenie}
\delta^{0}_{3} \chi= \Phi \cdot \alpha, 
\end{equation} 
and 
$1=1 + n -r$, $2=2+m-t$, i.e., $n=r$,  which leads to $r=1$;  $m=t$, $0\le t \le 2$, i.e., 
$\alpha \in C^{1}_{t}(V, \W)$. 
%%%%%%%%
%%
All other orthogonality conditions for the short sequence \eqref{shortseq} does not allow relations of the form
 \eqref{uravnenie}. 

%%%%%%%%%%%%%%%%%%%%%%%%%%%%%%%%%%%%%%%%%%%%%%%%%%%%%%%%%%%%%%%%%%%%%%%%%%%%%%%%%%%%%%%%%%%%%%%%%%%%%%%%%%%%%%
%%
Consider now \eqref{isxco}.
We obtain, using \eqref{leibniz}
\[
 \delta^{2-r'}_{4-t'} (\Phi \cdot \delta^{0}_{3} \chi)=  
\left(\delta^{1}_{2} \Phi\right) \cdot \delta^{0}_{3} \chi + \Phi \cdot \delta^{1}_{2} \delta^{0}_{3} \chi=
 \left(\delta^{1}_{2} \Phi\right) \cdot \delta^{0}_{3} \chi= \left(\delta^{1}_{2} \Phi\right) \cdot \Phi \cdot 
\alpha. 
\]
Thus 
\[
 0=\delta^{3-r'}_{3-t'} \delta^{2-r'}_{4-t'} (\Phi \cdot \delta^{0}_{3} \chi)=  
\delta^{3-r'}_{3-t'}  \left( \left(\delta^{1}_{2} \Phi\right) \cdot \Phi \cdot 
\alpha.  \right), 
\]
and $\left(\left(\delta^{1}_{2} \Phi\right) \cdot \Phi \cdot \alpha\right) $ is closed. 
At the same time, 
from \eqref{isxco} %$\Phi \cdot \delta^{0}_{3} \chi=0$,
 it follows that 
\[
0=\delta^{1}_{2} \Phi \cdot \delta^{0}_{3} \chi- \Phi \cdot\delta^{1}_{2}\delta^{0}_{3} \chi 
= \left( \Phi\cdot \delta^{0}_{3} \chi \right).
\] 
Thus 
\[
\delta^{1}_{2} \Phi \cdot \delta^{0}_{3} \chi= \delta^{1}_{2} \Phi \cdot \Phi \cdot \alpha =0.
\] 
Consider \eqref{uravnenie}. 
Acting by $\delta^{1}_{2}$ and substituting back we obtain 
\[
0= \delta^{1}_{2} \delta^{0}_{3} \chi= \delta^{1}_{2}(\Phi \cdot \alpha)=  
\delta^{1}_{2}(\Phi) \cdot \alpha - \Phi \cdot \delta^{1}_{t} \alpha. 
\]
thus 
\[
\delta^{1}_{2}(\Phi) \cdot \alpha = \Phi \cdot \delta^{1}_{t} \alpha. 
\]
The last equality trivializes on applying $\delta^{3}_{t+1}$ to both sides. 
%%

%%%%%%%%%%%%%%%%%%%%%%%%%%%%%%%%%%%%%%%%%%%%%%%%%%%%%%%%%%%%%%%%%%%%%%%%%%%%%%%%%%%%%%%%%%%%%%%%%%%%%%%%%%%%
Let us show now the non-vanishing property of $\left(\left(\delta^{1}_{2} \Phi \right)\cdot \Phi\right)$. 
Indeed, suppose $\left(\delta^{1}_{2} \Phi \right)\cdot \Phi=0$. Then there exists $\gamma \in C^{n}_{m}(V, \W)$, 
such that $\delta^{1}_{2} \Phi =\gamma \cdot \Phi$. Both sides of the last equality should belong to the same double complex 
space but one can see that it is not possible. 
Thus, $\left(\delta^{1}_{2} \Phi \right)\cdot \Phi$ is non-vanishing. 
One proves in the same way that $\left(\delta^{0}_{3} \chi \right)\cdot \chi$ and 
$\left(\delta^{1}_{t} \alpha \right)\cdot \alpha$ do not vanish too.  
Now let us show  that $\left[\left(\delta^{1}_{2} \Phi \right)\cdot \Phi \right]$ 
 is invariant, i.e., it does not depend on the choice of $\Phi \in C^1_2(V, \W)$.
 Substitute $\Phi$ by 
$\left(\Phi + \eta\right)\in C^{1}_{2}(V, \W)$.   
We have 
\begin{eqnarray}
\label{pokaz}
\nonumber
\left(\delta^{1}_{2} \left( \Phi + \eta \right) \right) \cdot \left( \Phi  + \eta \right) &=& 
\left(\delta^{1}_{2} \Phi\right) \cdot \Phi 
 + \left( \left(\delta^{1}_{2} \Phi \right)\cdot \eta 
-  \Phi \cdot \delta^{1}_{2} \eta  \right)  
\nn
&+& \left( \Phi \cdot \delta^{1}_{2} \eta   + 
\delta^{1}_{2} \eta  \cdot \Phi \right) 
 +
\left(\delta^{1}_{2} \eta \right) \cdot \eta. 
\end{eqnarray}
Since
\[
\left( \Phi \cdot \delta^{1}_{2} \eta  + 
\left(\delta^{1}_{2} \eta\right)  \cdot \Phi \right)= 
\Phi \delta^{1}_{2} \eta -  (\delta^{1}_{2} \eta) \Phi 
+ \left(\delta^{1}_{2} \eta\right) \Phi - \Phi \; \delta^{1}_{2} \eta=0, 
\]
then \eqref{pokaz} represents the same cohomology class 
$\left[ \left(\delta^{1}_{2}  \Phi \right) \cdot \Phi \cdot \alpha \right]$. 
The same folds for $\left[\left(\delta^{0}_{3} \chi \right)\cdot \chi \right]$, and 
$\left[\left(\delta^{1}_{t} \alpha \right)\cdot \alpha \right]$. 
\end{proof}
%%%%%%%%%%%%%%%%%%%%%%%%%%%%%%%%%%%%%%%%%%%%%%%%%%%%%%%%%%%%%%%%%%%%%%%%%%%%%%%%%%%%%%%%%%%%
%%
\begin{remark}
Due to Proposition \ref{}, all chahomological classes are invariant with respect to correponding group 
${\rm Aut}_{z_1, \ldots, z_n}\Oo^{(n)}$ 
changes of formal parameters. 
\end{remark}
%%
%%%%%%%%%%%%%%%%%%%%%%%%%%%%%%%%%%%%%%%%%%%%%%%%%%%%%%%%%%%%%%%%%%%%%%%%%%%%%%%%%%%%%%%%%%
 The orthogonality condition for a double complex sequence \eqref{shortseq}, 
 together with the action of co-boundary operators 
\eqref{hatdelta} and \eqref{halfdelta}, and the multiplication formulas \eqref{product}--\eqref{leibniz},  
define a differential bi-graded algebra depending on vertex algebra elements and formal parameters. 
In particular, for the short sequence \eqref{shortseq}, we obtain in this way the generators and commutation relations 
for a continual Lie algebra $\mathcal G(V)$ (a generalization of ordinary Lie algebras with continual  
space of roots, c.f. \cite{saver})  
with the continual root space represented by a grading-restricted vertex algebra $V$. 
\begin{lemma}
For the short sequence \eqref{shortseq} we get a continual Lie algebra $\mathcal G(V)$
with generators 
\begin{equation}
\label{generators}
\left\{\Phi(v_1),\; \chi, \;\alpha(v_2),\; \delta^1_2 \Phi(v_1),\; \delta^0_3 \chi,\; \delta^1_t \alpha (v_2), 
0 \le t \le 2
\right\}, 
\end{equation}
 and commutation relations for a continual Lie algebra $\mathcal G(V)$ 
\begin{eqnarray}
\label{comid}
%%
%\Phi\cdot \delta^1_2\Phi= \left(\delta^1_2\Phi\right) \cdot  \Phi; \; \;  &&
%%
 %% \chi\cdot \delta^0_3\chi &=& - \left(\delta^0_3\chi\right) \cdot \chi;
  \; \;  
%%
 %\alpha \cdot \delta^1_t\alpha= \left(\delta^1_t\alpha\right) \cdot  \alpha;  
%%
%\nn
%%
\Phi\cdot \delta^1_t \alpha&=&\alpha \cdot \delta^1_2 \Phi  \ne 0,
\nn
% \; \; 
%%
  \delta^{0}_{3} \chi&=& \Phi \cdot \alpha,
%%\nn
%%  
%%
\end{eqnarray}  
with all other relations being trivial.  
The sum of cohomological classes \eqref{stupor} provides an invariant of $\mathcal G(V)$. 
\end{lemma}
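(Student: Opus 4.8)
The plan is to read off the generators and the commutation relations directly from the structural results established earlier, and then to verify that the listed brackets are the only non-trivial ones. First I would recall that, by Proposition \ref{tolsto} and Proposition \ref{tosya}, the pair $(C^\bullet_\bullet(V,\W),\cdot_\epsilon,\delta^{k+n-r}_{m+m'-t})$ is a bi-graded differential algebra, and that the commutator $[\,\cdot_\epsilon\,]$ of \eqref{product} is antisymmetric with the property \eqref{fifi} that every element squares to zero. This is precisely the algebraic data of a Lie algebra whose root space is indexed by the vertex algebra elements $v\in V$ and the bi-degrees $(n,m)$; following \cite{saver}, a continual Lie algebra is exactly a Lie algebra equipped with such a continuum-indexed root decomposition, so the structure $\mathcal G(V)$ is well-defined once the generators and relations are exhibited.

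Next I would identify the generators. From the analysis in the proof of the preceding Proposition, the orthogonality condition \eqref{ortho} on the short sequence \eqref{shortseq} forces, up to relabelling, exactly the three data $\Phi\in C^1_2(V,\W)$, $\chi\in C^0_3(V,\W)$ and $\alpha\in C^1_t(V,\W)$ with $0\le t\le 2$, together with their co-boundaries $\delta^1_2\Phi$, $\delta^0_3\chi$, $\delta^1_t\alpha$; these six families are the set \eqref{generators}. The non-trivial relations are then \eqref{uravnenie}, namely $\delta^0_3\chi=\Phi\cdot\alpha$, and the identity $\delta^1_2(\Phi)\cdot\alpha=\Phi\cdot\delta^1_t\alpha\ne 0$ derived by applying $\delta^1_2$ to \eqref{uravnenie} and using the Leibniz rule \eqref{leibniz}; these are exactly the entries of \eqref{comid}. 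To close the argument I would check that all remaining brackets among the six generators vanish: by \eqref{fifi} the bracket of any generator with itself is zero, and the argument after \eqref{uravnenie} already showed that $\delta^1_2\Phi\cdot\delta^0_3\chi=0$, $\delta^1_2\Phi\cdot\Phi\cdot\alpha=0$, and that the remaining combinations trivialize upon applying the appropriate co-boundary operator; the bi-grading condition (that both sides of each putative relation must lie in the same $C^n_m$) then rules out any further relation, exactly as in the non-vanishing argument for $(\delta^1_2\Phi)\cdot\Phi$.

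Finally I would verify the last sentence, that the sum of the cohomology classes in \eqref{stupor} is an invariant of $\mathcal G(V)$. Here I would invoke the invariance statements already proved: each of $\left[(\delta^1_2\Phi)\cdot\Phi\right]$, $\left[(\delta^0_3\chi)\cdot\chi\right]$, $\left[(\delta^1_t\alpha)\cdot\alpha\right]$ is independent of the choice of representative $\Phi$, $\chi$, $\alpha$ respectively (shown via the computation \eqref{pokaz} and its analogues), and each is $\Aut_{z_1,\dots,z_n}\Oo^{(n)}$-invariant by Lemma \ref{popa}. Since the generators of $\mathcal G(V)$ are built from $\Phi,\chi,\alpha$ and their co-boundaries, any automorphism of $\mathcal G(V)$ preserving the root grading permutes the representatives within their classes, so it fixes each summand, hence fixes the sum; this gives the claimed invariant.

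The main obstacle I expect is the bookkeeping of bi-degrees in the relations of $\mathcal G(V)$: one must keep careful track of how $r$ (the number of coinciding formal parameters) and $t$ (the number of common composable vertex operators) shift the indices under both the product \eqref{toporno} and the differential \eqref{hatdelta}, so that the constraints $1=1+n-r$, $2=2+m-t$ solved in the preceding proof indeed pin down $r=1$, $m=t$ and no other configuration produces a relation of the form \eqref{uravnenie}. Everything else is a direct transcription of the Leibniz rule \eqref{leibniz}, the vanishing \eqref{fifi}, and the invariance computation \eqref{pokaz}.
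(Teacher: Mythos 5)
Your route is essentially the paper's: you read the generators and the two non-trivial relations off the orthogonality/bi-grading analysis of the preceding Proposition (the constraint solving $n=r=1$, $m=t$ that yields \eqref{uravnenie}), dispose of the remaining brackets by the bi-grading constraint and \eqref{fifi}, and quote the earlier invariance computations (\eqref{pokaz}, Lemma \ref{popa}) for the final sentence. Two points, however, are genuinely missing. First, you assert that antisymmetry of the bracket \eqref{product} together with the square-zero property \eqref{fifi} ``is precisely the algebraic data of a Lie algebra''; it is not --- the Jacobi identity is also required, and since the $\epsilon$-product has not been shown to be associative, the bracket is not automatically a Lie bracket. The paper's proof explicitly lists checking the Jacobi identities for the generators \eqref{generators} and relations \eqref{comid} as a step (stated as easy, but it cannot be replaced by the square-zero property alone). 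Likewise, the non-vanishing $\Phi\cdot\delta^1_t\alpha\ne 0$ in \eqref{comid} is obtained in the paper by a contradiction argument of the same bi-grading type used for $(\delta^1_2\Phi)\cdot\Phi$; you assert it without running that argument.

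Second, the paper substantiates the phrase ``continual Lie algebra'' concretely: after the redefinition \eqref{redifinition} ($H=\delta^0_3\chi$, $H^*=\chi$, $X_+(v_1)=\Phi(v_1)$, $X_-(v_2)=\alpha(v_2)$, $Y_\pm$ the corresponding coboundaries), the relations \eqref{comid} become $\left[X_+(v_1),X_-(v_2)\right]=H$ and $\left[X_+(v_1),Y_-(v_1)\right]=\left[X_-(v_2),Y_+(v_1)\right]$, i.e., one recognizes an affinization of a continual counterpart of $sl_2$ in the sense of \cite{saver}, with the root space given by the vertex algebra $V$. Your appeal to ``a Lie algebra with a continuum-indexed root decomposition'' gestures at this but never exhibits the identification, which is the step that actually ties $\mathcal G(V)$ to Saveliev's notion. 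With the Jacobi check added and this $sl_2$-type presentation written out, your argument coincides with the paper's.
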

\begin{proof}
Recall that $\Phi(v_1)(z_1) \in C^1_2(V, \W)$, $\chi \in C^0_3(V, \W)$, $\alpha \in C^1_t(V, \W)$, $0 \le t \le 2$. 
One easily checks the commutation relations coming from the orthogonality and bi-grading conditions. 
Further applications of \eqref{hatdelta}, \eqref{halfdelta}, and \eqref{ortho} to \eqref{shortseq}
 lead to trivial results. 
$\Phi \cdot \delta^1_t \alpha\ne 0$ is proven by contradiction.  
It is easy to check Jacobi identities for \eqref{generators} and \eqref{comid}. 
With a redefinition  
\begin{eqnarray}
\label{redifinition}
H&=&\delta^0_3\chi, 
\nn
%%\; \; 
H^*&=&\chi, 
\nn
%% \; \;
X_+(v_1)&=&\Phi(v_1),
\nn % \; \; 
X_-(v_2)&=&\alpha(v_2), 
\nn
 Y_+(v_1)&=&\delta^1_2\Phi(v_1),
\nn
 %% \; \; 
 Y_-(v_2)&=&\delta^1_t\alpha(v_2), 
\end{eqnarray}
the commutation relations \eqref{comid} become:
\begin{eqnarray*}
\left[X_+(v_1), X_-(v_2) \right]&=&H,
\nn %% \; \; 
\left[X_+(v_1), Y_-(v_1) \right] &=& \left[X_-(v_2), Y_+(v_1) \right], 
\end{eqnarray*}
i.e., the orthogonality condition brings about a representation of an affinization \cite{K} of continual counterpart  
of the Lie algebra $sl_2$.  
 Vertex algebra elements in \eqref{redifinition} play the role of 
roots belonging to continual non-commutative root space given by a vertex algebra $V$.  
\end{proof}
%%
%%%%%%%%%%%%%%%%%%%%%%%%%%%%%%%%%%%%%%%%%%%%%%%%%%%%%%%%%%%%%%%%%%%%%%%%%%%%%%%%%%%%%%%%%%%%%%
\section*{Acknowledgments}
The author would like to thank 
Y.-Z. Huang,  H. V. L\^e, and P. Somberg 
for related discussions. 
Research of the author was supported by the GACR project 18-00496S and RVO: 67985840. 
%%
%%%%%%%%%%%%%%%%%%%%%%%%%%%%%%%%%%%%%%%%%%%%%%%%%%%%%%%%%%%%%%%%%%%%%%%%%%%%%%%%%%%%%%%%%%
%%%%%%%%%%%%%%%%%%%%%%%%%%%%%%%%%%%%%%%%%%%%%%%%%%%%%%%%%%%%%%%%%%%%%%%%%%%%%%%%%%%%%%%%%%%
\section{Appendix: Grading-restricted vertex algebras and their modules}
\label{grading}
In this section, following \cite{Huang} we recall basic properties of 
grading-restricted vertex algebras and their grading-restricted generalized 
modules, useful for our purposes in later sections. 
We work over the base field $\C$ of complex numbers. 
%%
%%%%%%%%%%%%%%%%%%%%%%%%%%%%%%%%%%%%%%%%%%%%%%%%%%%%%%%%%%%%%%%%%%%%%%%%%%%%%%%%%%%%%%%%%%5
%%
%%
A vertex algebra  
$(V,Y_V,\mathbf{1})$, cf. \cite{K},  consists of a $\Z$-graded complex vector space 
\[
V = %\coprod
\bigoplus_{n\in\Z}\,V_{(n)}, \quad \dim V_{(n)}<\infty\,\, \mbox{for each}\,\, n\in \Z, 
\]
and linear map 
\[
Y_V:V\rightarrow {\rm End \;}(V)[[z,z^{-1}]], 
\]
 for a formal parameter $z$ and a 
distinguished vector $\mathbf{1_V}\in V$.  
The evaluation of $Y_V$ on $v\in V$ is the vertex operator
\[
Y_V(v)\equiv Y_V(v,z) = \sum_{n\in\Z}v(n)z^{-n-1}, 
\]
with components
\[
(Y_{V}(v))_{n}=v(n)\in {\rm End \;}(V),
\]
 where $Y_V(v,z)\mathbf{1} = v+O(z)$.
Now we describe further restrictions \cite{Huang}, defining a grading-restricted vertex algebra: 
\noindent
\begin{enumerate}
\item 
\noindent
{Grading-restriction condition}:
$V_{(n)}$ is finite dimensional for all $n\in \Z$, and $V_{(n)}=0$ for $n\ll 0$. 

\item { Lower-truncation condition}:
For $u, v\in V$, $Y_{V}(u, z)v$ contains only finitely many negative 
power terms, that is, $Y_{V}(u, z)v\in V((z))$ (the space of formal 
Laurent series in $z$ with coefficients in $V$).   

\item { Identity property}: 
Let $\one_{V}$ be the identity operator on $V$. Then 
\[
Y_{V}(\mathbf{1}_V, z)={\rm Id}_{V}.
\] 

\item { Creation property}: For $u\in V$, $Y_{V}(u, z)\mathbf{1}_V\in V[[z]]$
and 
\[
\lim_{z\to 0}Y_{V}(u, z)\mathbf{1}_V=u.
\]

\item { Duality}: For $u_{1}, u_{2}, v\in V$, 
$v'\in V'=\coprod_{n\in \mathbb{Z}}V_{(n)}^{*}$ ($V_{(n)}^{*}$ denotes
the dual vector space to $V_{(n)}$ and $\langle\,. ,. \rangle$ the evaluation 
pairing $V'\otimes V\to \C$), the series 
%%
%%\begin{eqnarray*}
%%
%%& & $\langle v', Y_{V}(u_{1}, z_{1})Y_{V}(u_{2}, z_{2})v\rangle$, %\quad
%%
$\langle v', Y_{V}(u_{2}, z_{2})Y_{V}(u_{1}, z_{1})v\rangle$, and % \quad\mbox{and} \\
%& & 
$\langle v', Y_{V}(Y_{V}(u_{1}, z_{1}-z_{2})u_{2}, z_{2})v\rangle$, 
%%\end{eqnarray*}
%%
are absolutely convergent
in the regions $|z_{1}|>|z_{2}|>0$, $|z_{2}|>|z_{1}|>0$,
$|z_{2}|>|z_{1}-z_{2}|>0$, respectively, to a common rational function 
in $z_{1}$ and $z_{2}$ with the only possible poles at $z_{1}=0=z_{2}$ and 
$z_{1}=z_{2}$. 

%%%%%%%%%%%%%%%%%%%%%%%%%%%%%%%%%%%%%%%%%%%%%%%%%%%%%%%%%%%%%%%%%%%%%%%%%%%%%%%%%%%%
One assumes the existence of Virasoro vector $\omega\in V$:
its vertex operator 
$Y(\omega, z)=\sum_{n\in\Z}L(n)z^{-n-2}$
is determined by Virasoro operators $L(n): V\to V$ fulfilling 
(notice that with abuse of notation we denote $L_V(n)=L(n)$)
\[
[L(m), L(n)]=(m-n)L(m+n)+\frac{c}{12}(m^{3}-m)\delta_{m+b, 0}{\rm Id_V},
\]
($c$ is called the {\it central charge} of $V$).  
The grading operator is given by $L(0)u=nu,\quad u\in V_{(n)}$, 
($n$ is called the weight of $u$ and denoted by $\wt(u)$).
\item { $L_V(0)$-bracket formula}: Let $L_{V}(0): V\to V$ 
be defined by $L_{V}(0)v=nv$ for $v\in V_{(n)}$. Then
\[
[L_{V}(0), Y_{V}(v, z)]=Y_{V}(L_{V}(0)v, z)+z\frac{d}{dz}Y_{V}(v, z),
\] 
for $v\in V$.
\item { $L_V(-1)$-derivative property}: 
Let $L_{V}(-1): V\to V$ be the operator 
given by 
\[
L_{V}(-1)v=\res_{z}z^{-2}Y_{V}(v, z)\one=Y_{(-2)}(v)\one,
\] 
for $v\in V$. Then for $v\in V$, 
\begin{equation*}
\label{derprop}
\frac{d}{dz}Y_{V}(u, z)=Y_{V}(L_{V}(-1)u, z)=[L_{V}(-1), Y_{V}(u, z)].
\end{equation*}
\end{enumerate}
%%
%%%%%%%
Correspondingly, a {grading-restricted generalized $V$-module} is a vector space 
$W$ equipped with a vertex operator map 
\[
Y_{W}: V\otimes W \to W[[z, z^{-1}]],
\] 
\begin{eqnarray*}
u\otimes w&\mapsto & Y_{W}(u, w)\equiv Y_{W}(u, z)w=\sum_{n\in \Z}(Y_{W})_{n}(u,w)z^{-n-1}, 
\end{eqnarray*}
and linear operators $L_{W}(0)$ and $L_{W}(-1)$ on $W$ satisfying conditions similar as in the 
definition for a grading-restricted vertex algebra. In particular, 
\begin{enumerate}
\item {Grading-restriction condition}:
The vector space $W$ is $\mathbb C$-graded, that is, 
$W=\coprod_{\alpha\in \mathbb{C}}W_{(\alpha)}$, such that 
$W_{(\alpha)}=0$ when the real part of $\alpha$ is sufficiently negative. 

\item { Lower-truncation condition}:
For $u\in V$ and $w\in W$, $Y_{W}(u, z)w$ contains only finitely many negative 
power terms, that is, $Y_{W}(u, z)w\in W((z))$.

\item { Identity property}: 
Let ${\rm Id}_{W}$ be the identity operator on $W$,
$Y_{W}(\mathbf{1}, z)={\rm Id}_{W}$.

\item { Duality}: For $u_{1}$, $u_{2}\in V$, $w\in W$,
$w'\in W'=\coprod_{n\in \mathbb{Z}}W_{(n)}^{*}$ ($W'$ is 
the dual $V$-module to $W$), the series 
\begin{eqnarray}
\label{porosyataw}
&&
 \langle w', Y_{W}(u_{1}, z_{1})Y_{W}(u_{2}, z_{2})w\rangle,
\nn
%%\quad 
&&
\langle w', Y_{W}(u_{2}, z_{2})Y_{W}(u_{1}, z_{1})w\rangle, %and  %%\quad\mbox{and}
\nn
&& 
\langle w', Y_{W}(Y_{V}(u_{1}, z_{1}-z_{2})u_{2}, z_{2})w\rangle, 
\end{eqnarray}
are absolutely convergent
in the regions $|z_{1}|>|z_{2}|>0$, $|z_{2}|>|z_{1}|>0$,
$|z_{2}|>|z_{1}-z_{2}|>0$, respectively, to a common rational function 
in $z_{1}$ and $z_{2}$ with the only possible poles at $z_{1}=0=z_{2}$ and 
$z_{1}=z_{2}$. 

The locality 
\[
Y_W(v_1, z_1) Y_W(v_2, z_2) \sim Y_W(v_2, z_2) Y_W(v_1, z_1),
\]
 and associativity 
\[
Y_W(v_1, z_1) Y_W(v_2, z_2) \sim Y_W(Y_V v_1, z_1- z_2) v_2, z_2),
\]
 properties for the vertex operators in a $V$-module $W$ follow from the
Jacobi identity \cite{K}.  
\item { $L_{W}(0)$-bracket formula}: For  $v\in V$,
\[
[L_{W}(0), Y_{W}(v, z)]=Y_{W}(L(0)v, z)+z\frac{d}{dz}Y_{W}(v, z).
\] 
\item { $L_W(0)$-grading property}: For $w\in W_{(\alpha)}$, there exists
$N\in \Z_{+}$ such that $(L_{W}(0)-\alpha)^{N}w=0$. 
\item { $L_W(-1)$-derivative property}: For $v\in V$,  
\[
\frac{d}{dz}Y_{W}(u, z)=Y_{W}(L_{V}(-1)u, z)=[L_{W}(-1), Y_{W}(u, z)].
\] 
\end{enumerate}
%%
%%%%%%%%%%%%%%%%%%%%%%%%%%%%%%%%%%%%%%%%%%%%%%%%%%%%%%%%%%%%%%%%%%%%%%%%%%%%%%%%%%%%%%%
%%%%%%%%%%%%%%%%%%%%%%%%%%%%%%%%%%%%%%%%%%%%%%%%%%%%%%%%%%%%%%%%%%%%%%%%%%%%%%%%%%%%%%%
%%
For $v\in V$, and $w \in W$, the intertwining operator 
\begin{eqnarray}
\label{interop}
&& Y_{WV}^{W}: V\to W,  
\nn
&&
v   \mapsto  Y_{WV}^{W}(w, z) v,    
\end{eqnarray}
 is defined by 
\begin{eqnarray}
\label{wprop}
Y_{WV}^{W}(w, z) v= e^{zL_W(-1)} Y_{W}(v, -z) w. 
\end{eqnarray}
%%
%%%%%%%%%%%%%%%%%%%%%%%%%%%%%%%%%%%%%%%%%%%%%%%%%%%%%%%%%%%%%%%%%%%%%%%%%%%%%%%%%%%%%%%%%%%%%%%%%%
%%%%%%%%%%%%%%%%%%%%%%%%%%%%%%%%%%%%%%%%%%%%%%%%%%%%%%%%%%%%%%%%%%%%%%%%%%%%%%%%%%%%%%%%%%%%%%%%%%
%%%
\subsection{Non-degenerate  invariant bilinear form on $V$} %The Li--Zamolodchikov (Li--Z) Metric}
\label{liza}
The subalgebra 
\[
\{L_V(-1),L_V(0),L_V(1)\}\cong SL(2,\mathbb{C}), 
\]
 associated with M\"{o}bius transformations on  
$z$ naturally acts on $V$,   (cf., e.g. \cite{K}). 
In particular, 
\begin{equation}
\gamma_{\lambda}=\left(
\begin{array}{cc}
0 & \lambda\\
-\lambda & 0\\	
\end{array}
\right)
:z\mapsto w=-\frac{\lambda^{2}}{z},
 \label{eq: gam_lam}
\end{equation}
is generated by 
\[
T_{\lambda }= \exp\left(\lambda L_V{(-1)}\right) 
%5
\; \exp\left({\lambda}^{-1}L_V(1)\right) \; \exp\left(\lambda L_V(-1)\right),   
\]
 where
\begin{equation}
T_{\lambda }Y(u,z)T_{\lambda }^{-1}=
Y\left(\exp \left(-\frac{z}{\lambda^{2}}L_V(1)\right)
\left(-\frac{z}{\lambda}\right)^{-2L_V(0)}u,-\frac{\lambda^{2}}{z}\right).  \label{eq: Y_U}
\end{equation}
In our considerations (cf. Appendix \ref{sphere}) of Riemann sphere %surface 
sewing, we use in particular, 
%%
%Later we will be particularly interested in 
%%
the M\"{o}bius map 
\[
z\mapsto z'= \epsilon/z,
\] 
 associated with the sewing condition \eqref{pinch} with 
\begin{equation}
\lambda=-\xi\epsilon^{\frac{1}{2}},
\label{eq:lamb_eps}
\end{equation}  
with $\xi\in\{\pm \sqrt{-1}\}$. % as previously introduced in \eqref{dz1dz2}.
%%
%For $u\in V$ of half-integral weight the action of $-\gamma_{\lambda}=\gamma_{-\lambda}$ is distinguished from 
%that of $\gamma_{\lambda}$ whereas for integral weight they are equivalent. In particular we must distinguish the choices 
%  $\lambda =\pm\sqrt{-1}$ in (\ref{eq: gam_lam}) corresponding to the
%inversion map $z\mapsto z^{-1}$ normally used to define the adjoint vertex operator. 
%%
The adjoint vertex operator \cite{K, FHL}  
is defined by %we %therefore 
%%
%define   
%%
\begin{equation}
Y^{\dagger }(u,z)=\sum_{n\in \Z}u^{\dagger }(n)z^{-n-1}= T_{\lambda}Y(u,z)T_{\lambda}^{-1}. \label{eq: adj op}
\end{equation}%
%%
%%%%%%%%%%%%%%%%%%%%%%%%%%%%%%%%%%%%%%%%%%%%%%%%%%%%%%5
%
A bilinear form $\langle . , . \rangle_{\lambda}$ on $V$ is 
invariant if for all $a,b,u\in V$, %\cite{Sche}, 
%5
if  
\begin{equation}
\langle Y(u,z)a,b\rangle_{\lambda} =%(-1)^{p(u)p(a)}
\langle a,Y^{\dagger }(u,z)b\rangle_{\lambda}, 
\label{eq: inv bil form}
\end{equation}%
i.e.
\[
 \langle u(n)a,b\rangle_{\lambda} =%(-1)^{p(u)p(a)}
\langle a,u^{\dagger }(n)b\rangle_{\lambda}.
\] 
Thus it follows that 
\begin{equation}
\label{dubay}
\langle L_V(0)a,b\rangle_{\lambda} =\langle a,L_V(0)b\rangle_{\lambda}, 
\end{equation}
 so that 
\begin{equation}
\label{condip}
\langle a,b\rangle_{\lambda} =0, 
\end{equation}
  if $wt(a)\not=wt(b)$ for homogeneous $a,b$.
 One also finds 
\[
\langle a,b\rangle_{\lambda} = \langle b,a \rangle_{\lambda}.  
\]
%$
 %\cite{FHL}. % , \cite{Sche}. 
%%
The form 
$\langle . , .\rangle_{\lambda}$ is unique up to normalization if $L_V(1)V_{1}=V_{0}$.  
%%
%(we choose the normalization $\langle \mathbf{1} ,\mathbf{1}\rangle_{\lambda}=1$ throughout) 
%and is non-degenerate if and only if $V$ is simple \cite{L}. 
%%
% We call such a unique non-degenerate 
%symmetric bilinear form the Li--Zamolodchikov (Li--Z) metric.
%%
 Given any $V$ basis $\{ u^{\alpha}\}$ we define the %Li--Z 
dual $V$ basis $\{ \overline{u}^{\beta}\}$ where 
\[
\langle u^{\alpha} ,\overline{u}^{\beta}\rangle_{\lambda}=\delta^{\alpha\beta}.
\] 
%%

%%%%%%%%%%%%%%%%%%%%%%%%%%%%%%%%%%%%%%%%%%%%%%%%%%%%%%%%%%%%%%%%%%%%%%%%%%%%%%%%%%%%%%%%%%%%%%%%%%
%%%%%%%%%%%%%%%%%%%%%%%%%%%%%%%%%%%%%%%%%%%%%%%%%%%%%%%%%%%%%%%%%%%%%%%%%%%%%%%%%%%%%%%%%%%%%%%%%%
%%%%%%%%%%%%%%%%%%%%%%%%%%%%%%%%%%%%%%%%%%%%%%%%%%%%%%%%%%%%%%%%%%%%%%%%%%%%%%%%%%%%%%%%%%%%%%%%%%%%%%%%%%%
\section{Appendix: %Geometrical realization of $\W$-spaces product on 
A sphere formed from sewing of two spheres}
\label{sphere}
%%

%%%%%%%%%%%%%%%%%%%%%%%%%%%%%%%%%%%%%%%%%%%%%%%%%%%%%%%%%%%%%%%%%%%%%%%%%%%%%%%%%%%%%%%%%%%%%%%%%%%%%%%%%%%%%%%%%%%%%%%
%Definition \ref{duplodef} is the general definition only, and one needs a concrete realizaion of the map \eqref{gendef}
%%
%which is given below using a suitable geometrical model.
%%
The matrix element for a number of vertex operators of a vertex algebra is usually associated \cite{FHL, FMS, TUY} 
with a vertex algebra character on a sphere. We extrapolate this notion to the case of $\W_{z_1, \ldots, z_n}$ spaces. 
In Section \ref{productc} we explained that a space $\W_{z_1, \ldots, z_n}$ can be associated with a Riemann sphere with marked points, 
while the product of two such spaces is then associated with a sewing of such two spheres with a number 
of marked % three puncured 
points 
and extra points with local coordinates identified with formal parameters of $\W_{x_1, \ldots, x_k}$ and $\W_{y_1, \ldots, y_n}$. 
In order to supply an appropriate geometric construction for the product,  
 we use the $\epsilon$-sewing procedure (described in this Appendix) for two initial spheres to obtain a matrix element associated with \eqref{gendef}. 

%%%%%%%%%%%%%%%%%%%%%%%%%%%%%%%%%%%%%%%%%%%%%%%%%%%%%%%%%%%%%%%%%%%%%%%%%%%%%%%%%%%%%%%%%%%%%%%%%%%%%%%%%%
\begin{remark}
In addition to the $\epsilon$-sewing procedure of two initial spheres, one can alternatively use 
the self-sewing procedure \cite{Y} for the sphere to get, at first, the torus, and then by sending parameters 
to appropriate limit by shrinking genus to zero. As a result, one obtains again the sphere but with a  
different parameterization.  In the case of spheres, such a procedure 
%%
%(see explicite formulation in Appendix \ref{rhorho}) 
%%leads to a similar 
%%
consideration of the product of $\W$-spaces so we focus in this paper on the $\epsilon$-formalizm only.  
\end{remark}
%%
%%%%%%%%%%%%%%%%%%%%%%%%%%%%%%%%%%%%%%%%%%%%%%%%%%%%%%%%%%%%%%%%%%%%%%%%%%%%%%%%%
%%%%%%%%%%%%%%%%%%%%%%%%%%%%%%%%%%%%%%%%%%%%%%%%%%%%%%%%%%%%%%%%%%%%%%%%%%%%%%%%%
%\subsection{Geometrical realization of $\W$-spaces product}
%%
%\label{sfery}
%%
In our particular case of $\W$-values rational functions obtained from matrix elements \eqref{def}  
two initial %auxillary  
auxiliary
 spaces we take Riemann spheres $\Sigma^{(0)}_a$, $a=1$, $2$, and the resulting 
space is formed by 
the sphere $\Sigma^{(0)}$ obtained by the procedure of sewing $\Sigma^{(0)}_a$. 
The formal parameters $(x_1, \ldots, x_k)$ and $(y_{1}, \ldots, y_n)$ are identified with 
local coordinates of $k$ and $n$ points on two initial spheres $\Sigma^{(0)}_a$, $a=1$, $2$ correspondingly.  
In the $\epsilon$ sewing procedure, some $r$ points 
%%parameters 
%%
among %$(x_1, \ldots, x_k)$ 
$(p_1, \ldots, p_k)$ 
may coincide with 
points 
among $(p'_{1}, \ldots, p'_n)$ %$(y_{k+1}, \ldots, y_n)$ 
when we identify the annuluses \eqref{zhopki}.   
This corresponds to the singular case of coincidence of $r$ formal parameters. 
%%
%%%%%%%%%%%%%%%%%%%%%%%%%%%%%%%%%%%%%%%%%%%%%%%%%%%%%%%%%%%%%%%%%%%%%%%%%%%%%
%%%%%%%%%%%%%%%%%%%%%%%%%%%%%%%%%%%%%%%%%%%%%%%%%%%%%%%%%%%%%%%%%%%%%%%%%%%%%
%%%%%%%%%%%%%%%%%%%%%%%%%%%%%%%%%%%%%%%%%%%%%%%%%%%%%%%%%%%%%%%%%%%%%%%
%%
%% 
%%
%In the $\epsilon $-sewing scheme we sew  two sphere $\Sigma_{a}^{(0)}$, $a=1$, $2$ with modular parameters 
%%
%... $\tau_{a}$ 
%%
%via the sewing relation (\ref{pinch}) to get the resulting sphere $\mathcal S$. 
%%
%Similarly to ref.~\cite{MT1} for VOAs, 
%%%%%%%%%%%%%%%%%%%%%%%%%%%%%%%%%%%%%%%%%%%%%%%%%%%%%%%%%%%%%%%%%%%%%%%%%%%%%%%%%%%%%%%%%%%%%%%%

%%%%%%%%%%%%%%%%%%%%%%%%%%%%%%%%%%%%%%%%%%%%%%%%%%%%%%%%%%%%%%%%%%%%%%%%%%%%%%
%%
Consider the sphere formed by sewing together two initial spheres in the sewing scheme referred to 
as the $\epsilon$-formalism in \cite{Y}. 
%%
%% ref\cite{MT1}, \cite{MT2}, \cite{TZ1}. 
%%
Let $\Sigma_a^{(0)}$,  %=\mathbb{C}/{\Lambda}_{a}$ for 
$a=1$, $2$ 
be %oriented tori with lattice 
to initial spheres.  
%% 
%${\Lambda}_{a}=2\pi i(\mathbb{Z}\tau _{a}\oplus \mathbb{Z})$ for 
%$\tau _{a}\in \mathbb{H}$.  
%%
%%
Introduce a complex sewing
parameter $\epsilon$ where 
\[
|\epsilon |\leq r_{1}r_{2},
\]
%%
%%%%%%%%%%%%%%%%%%%%%%%%%%%%%%%%%%%%%%%%%%%%%%%%%%%%%%%%%%%%%%%%%%%%%%%%%%%%%%%%%%%%%%%%%%
Consider $k$ distinct points on $p_i \in  \Sigma_{1}^{(0)}$, $i=1, \ldots, k$, 
with local coordinates $(x_1, \ldots, x_{k}) \in F_{k}\C$,  
and distinct points $p_j \in  \Sigma_{2}^{(0)}$, $j=1, \ldots, n$,
with local coordinates $(y_{1},\ldots ,y_{n})\in F_{n}\C$,  
%%
%For $u$, $v_1, \ldots, v_n \in V$, $\zeta_a \in \C$ given by \eqref{disk}, 
%%
with
\[
\left\vert x_{i}\right\vert
\geq |\epsilon |/r_{2}, 
\]
%%
 %and
%
\[
\left\vert y_{i}\right\vert \geq |\epsilon |/r_{1}. 
\] 
%%%%%%%%%%%%%%%%%%%%%%%%%%%%%%%%%%%%%%%%%%%%%%%%%%%%%%%%%%%%%%%%%%%%%%%%
%% 
Choose a local coordinate $z_{a}\in \mathbb{C}$ %/{\Lambda}_{a}$ 
on $\Sigma^{(0)}_a$ in the
neighborhood of points $p_{a}\in\Sigma^{(0)}_a$, $a=1$, $2$. 
%%
 %and 
Consider the closed disks 
\[
\left\vert \zeta_{a} \right\vert \leq r_{a}, 
\]
%%
%for $r_{a}<\frac{1}{2}D(q_{a})$ where \cite{Y} %\cite{MT2} 
%%
%\begin{equation*}
%%
%D(q_{a})=\min_{\lambda \in {\Lambda}_{a}, \lambda \neq 0}|\lambda |,
%%
%\end{equation*}
%%
%is ... %% the minimal lattice distance.  
%%
%%
 and excise the disk 
\begin{equation}
\label{disk}
\{
\zeta_{a}, \; \left\vert \zeta_{a}\right\vert \leq |\epsilon |/r_{\overline{a}}\}\subset 
\Sigma^{(0)}_a, 
\end{equation}
to form a punctured sphere  
\begin{equation*}
\widehat{\Sigma}^{(0)}_a=\Sigma^{(0)}_a \backslash \{\zeta_{a},\left\vert 
\zeta_{a}\right\vert \leq |\epsilon |/r_{\overline{a}}\}.
\end{equation*}
%
%%Here and below, 
%%
We use the convention 
\begin{equation}
\overline{1}=2,\quad \overline{2}=1.  
\label{bardef}
\end{equation}
Define the annulus
\begin{equation}
\label{zhopki}
\mathcal{A}_{a}=\left\{\zeta_{a},|\epsilon |/r_{\overline{a}}\leq \left\vert
\zeta_{a}\right\vert \leq r_{a}\right\}\subset \widehat{\Sigma}^{(0)}_a,
\end{equation}
and identify $\mathcal{A}_{1}$ and $\mathcal{A}_{2}$ as a single region 
$\mathcal{A}=\mathcal{A}_{1}\simeq \mathcal{A}_{2}$ via the sewing relation 
\begin{equation}
\zeta_{1}\zeta_{2}=\epsilon.   \label{pinch}
\end{equation}
In this way we obtain a genus zero compact Riemann surface 
\[
\Sigma^{(0)}=\left\{ \widehat {\Sigma}^{(0)}_1
\backslash \mathcal{A}_{1} \right\}
\cup \left\{\widehat{\Sigma}^{(0)}_2 \backslash 
\mathcal{A}_{2}\right\}\cup \mathcal{A}.
\] 
%%
%%
%%%%%%%%%%%%%%%%%%%%%%%%%%%%%%%%%%%%%%%%%%%%%%%%%%%%%%%%%%%%%%%%%%%%%%%%%%%%%%%%%%%%
%As a result of this geometric procedure we obtain the sphere with the module space 
%defined by \eqref{} and depending on $\epsilon$.  
%%
This sphere form a suitable geometrical model for the construction of a product of $\W$-valued rational forms
in Section \ref{productc}. 
%%
%%%%%%%%%%%%%%%%%%%%%%%%%%%%%%%%%%%%%%%%%%%%%%%%%%%%%%%%%%%%%%%%%%%%%%%%%%%%%%%%%%%%%%%%%%%
%%
\section{Appendix: proof of Proposition \ref{tosya}}
\label{duda}
%%
%%%%%%%%%%%%%%%%%%%%%%%%%%%%%%%%%%%%%%%%%%%%%%%%%%%%%%%%%%%%%%%%%%%%%%%%%%%%%%%%%%%%%%%%%%%%%%5
\begin{proof}
%%
%% 
%%%%%%%%%%%%%%%%%%%%%%%%%%%%%%%%%%%%%%%%%%%%%%%%%%%%%%%%%%%%%%%%%%%%%%%%%%%%%%%%%%%%%%%%%%%%%%%%%%%%%%%%%%%%%%%
%%
%%%%%%%%%%%%%%%%%%%%%%%%%%%%%%%%%%%%%%%%%%%%%%%%%%%%%%%%%%%%%%%%%%%%%%%%%%%%%%%%%%%%%%%%%%%%%%%%%%%%%%%%%%%%%%%%
%%
%In order to reconstruct 
%%
%$\left( \left(\delta^n_m \Phi \right)\; \Psi\right)$ and  $\left(\Phi \;\delta^{n'}_{m'} \Psi\right)$  
%%
%of \eqref{leibniz}, we add certain vanishing extra terms to obtain  
%%
%%%%%%%%%%%%%%%%%%%%%%%%%%%%%%%%%%%%%%%%%%%%%%%%%%%%%%%%%%%%%%%%%%%%%%%%%%%%%%%%%%%%%%%%%%%%%%%%%%%%%
%%%%%%%%%%%%%%%%%%%%%%%%%%%%%%%%%%%%%%%%%%%%%%%%%%%%%%%%%%%%%%%%%%%%%%%%%%%%%%%%%%%%%%%%%%%%%%%%%%%%%
%%
%% 
%%%%%%%%%%%%%%%%%%%%%%%%%%%%%%%%%%%%%%%%%%%%%%%%%%%%%%%%%%%%%%%%%%%%%%%%%%%%%%%%%%%%%%%%%%%%%%%%%%%%%%%%%
%%
For a vertex operator $Y_{V, W}(v,z)$ let us introduce a notation $\omega_{V, W}=Y_{V, W}(v,z)\; dz^{{\rm wt} v}$. 
Let us use notations \eqref{zsto} and \eqref{notari}. 
According to \eqref{hatdelta}, the action of 
$\delta_{m + m'-t}^{k + n-r}$ on $\widehat{R} \F(
v_1, x_1; \ldots; v_k, x_k; v'_1, y_1; \ldots; v'_k, y_n
%%
%%v_1', z_; \ldots; v'_{k+n'-r}, z_{k+n'-r}
%%
; \epsilon)$
is given by 
%%
%%%%%%%%%%%%%%%%%%%%%%%%%%%%%%%%%%%%%%%%%%%%%%%%%%%%%%%%%%%%%%%%%%%%%%%%%%%%%%%%%%%%%%%%
%% 
%%%%%%%%%%%%%%%%%%%%%%%%%%%%%%%%%%%%%%%%%%%%%%%%%%%%%%%%%%%%%%%%%%%%%%%%%%%%%%%%%%%%%%%%
\begin{eqnarray*}
&&   
\langle w', 
 \delta_{m + m'-t}^{k + n-r} \widehat{R} \; \F( 
v_1, x_1; \ldots; v_k, x_k; v'_1, y_1; \ldots; v'_n, y_n
%v_1', z_; \ldots; v'_{k+n'-r}, z_{k+n'-r}
; \epsilon) \rangle 
\nn
%%
%%%%%%%%%%%%%%%%%%%%%%%%%%%%%%%%%%%%%%%%%%%%%%%%%%%%%%%%%%%%%%%%%%%%%%%%%%%%%%%%%%%%%%%%%%%%%%
&& \quad =
\langle w', \sum_{i=1}^{k%+n
}(-1)^{i} \; 
\widehat{R} \; \F ( \widetilde{v}_1, z_1; \ldots;  \widetilde{v}_{i-1}, z_{i-1}; \;  \omega_V (\widetilde{v}_i, z_i  
 - z_{i+1}) 
 \widetilde{v}_{i+1}, z_{i+1}; \; \widetilde{v}_{i+2}, z_{i+2}; 
\nn
&& \qquad \qquad \qquad 
\ldots;  \widetilde{v}_k, z_k;  \widetilde{v}_{k+1}, z_{k+1}; \ldots; \widetilde{v}_{k+n}, z_{k+n}; \epsilon ) \rangle   
%%
%\nn
%&&
%\nn%
\end{eqnarray*}
%%
%%%%%%%%%%%%%%%%%%%%%%%%%%%%%%%%%%%%%%%%%%%%%%%%%%%%%%%%%%%%%%%%%%%%%%%%%%
\begin{eqnarray*}
%%%%%%%%%%%%%%%%
&& \qquad + 
\sum_{i=1}^{n-r}(-1)^{i} \; \langle w', 
\F \left( \widetilde{v}_1, z_1; \ldots; \widetilde{v}_k, z_k;  
\widetilde{v}_{k+1}, z_{k+1}; \ldots; \widetilde{v}_{k+i-1}, z_{k+i-1}; \right. 
\nn
&&\qquad \qquad \qquad  
  \omega_V \left(\widetilde{v}_{k+i}, z_{k+i}  
 - z_{k+i+1} ) \; 
 \widetilde{v}_{k+i+1}, z_{k+i+1}; \right.
\nn
&&
\left. 
\qquad \qquad \qquad \qquad \qquad \qquad 
\widetilde{v}_{k+i+2}, z_{k+i+2}; \ldots; \widetilde{v}_{k+n-r}, z_{k+n-r}; \epsilon \right) \rangle  
%%
%\nn
%%
\end{eqnarray*}
%%
%%%%%%%%%%%%%%%%%%%%%%%%%%%%%%%%%%%%%%%%%%%%%%%%%%%%%%%%%%%%%%%%%%%%%%%%%%
\begin{eqnarray*}
&& \qquad +  \langle w', 
 \omega_W \left(\widetilde{v}_1, z_1   
  \right) \; \F (\widetilde{v}_2, z_2
; \ldots; \widetilde{v}_{k}, z_k; \widetilde{v}_{k+1}, z_{k+1}; \ldots; \widetilde{v}_{k+n-r}, z_{k+n-r}; \epsilon   
)  \rangle  
%\nn
%%
%\nonumber
\end{eqnarray*}
%%
%%%%%%%%%%%%%%%%%%%%%%%%%%%%%%%%%%%%%%%%%%%%%%%%%%%%%%%%%%%%%%%%%%%%%%%%%%
\begin{eqnarray*}
 & &\qquad +  \langle w, (-1)^{k+n+1-r}    
 \omega_W(\widetilde{v}_{k+n-r+1}, z_{k+n-r+1}  
) 
\;
\nn
&&
 \qquad \qquad \qquad \F(\widetilde{v}_1, z_1
; \ldots; \widetilde{v}_k, z_k; \widetilde{v}_{k+1}, z_{k+1}; \ldots; \widetilde{v}_{k+n-r}, z_{k+n-r}; \epsilon )
 \rangle 
%%
%%\nn
%&&
%\nn
%%
\end{eqnarray*}
%%
%%%%%%%%%%%%%%%%%%%%%%%%%%%%%%%%%%%%%%%%%%%%%%%%%%%%%%%%%%%%%%%%%%%%%%%%%%
\begin{eqnarray*}
%%
%%%%%%%%%%%%%%%%%%%%%%%%%%%%%%%%%%%%%%%%%%%%%%%%%%%%%%%%%%%%%%%%%%%%%%%%
%%%%%%%%%%%%%%%%%%%%%%%%%%%%%%%%%%%%%%%%%%%%%%%%%%%%%%%%%%%%%%%%%%%%%%%%
&& \quad = 
\sum\limits_{u\in V} 
\langle w', \sum_{i=1}^{k}(-1)^{i} \; Y^W_{VW}(  
\F ( \widetilde{v}_1, z_1; \ldots;  \widetilde{v}_{i-1}, z_{i-1}; \; \omega_V (\widetilde{v}_i, z_i  
 - z_{i+1}) 
 \widetilde{v}_{i+1}, z_{i+1}; \; 
\nn
&& \qquad \qquad \qquad  
\widetilde{v}_{i+2}, z_{i+2};  \ldots;  \widetilde{v}_k, z_k), \zeta_1) u \rangle 
\nn
&&
\qquad \qquad \qquad \qquad \qquad \qquad
 \langle w', Y^W_{VW}( \F(\widetilde{v}_{k+1}, z_{k+1}; \ldots; \widetilde{v}_{k+n-r}, z_{k+n-r}), \zeta_2) 
\overline{u} \rangle  
%%
%%%%%%%%%%%%%%%%%%%%%%%%%%%%%%%%%%%%%%%%%%%%%%%%%%%%%%%%%%%%%%%%%%%%%%%
%\nn
\end{eqnarray*}
%%
%%%%%%%%%%%%%%%%%%%%%%%%%%%%%%%%%%%%%%%%%%%%%%%%%%%%%%%%%%%%%%%%%%%%%%%%%%
\begin{eqnarray*}
&& \qquad + 
\sum\limits_{u\in V}  \sum_{i=1}^{n-r}(-1)^{i} \; \langle w',  Y^W_{VW}(
\F \left( \widetilde{v}_1, z_1; \ldots; \widetilde{v}_k, z_k), \zeta_1) u \rangle \right. 
\nn
&&
\qquad \qquad \qquad 
   \langle w',  
Y^W_{VW}( \F(\widetilde{v}_{k+1}, z_{k+1}; \ldots; \widetilde{v}_{k+i-1}, z_{k+i-1};  
\nn
&& 
\qquad \qquad \qquad  \qquad \omega_V ( \widetilde{v}_i, z_{k+i}  
 - z_{k+i+1}) \; 
 \widetilde{v}_{k+i+1}, z_{k+i+1}; \widetilde{v}_{k+i+2}, z_{k+i+2}; 
\nn
&&
\qquad \qquad \qquad  \qquad \qquad  \ldots; \widetilde{v}_{k+n-r}, z_{k+n-r} ), \zeta_2) \overline{u} \rangle  
%%
%%
%\nn
%&&
%\nn
\end{eqnarray*}
%%
%%%%%%%%%%%%%%%%%%%%%%%%%%%%%%%%%%%%%%%%%%%%%%%%%%%%%%%%%%%%%%%%%%%%%%%%%%
\begin{eqnarray*}
%%%%%%%%%%%%%%%%%%%%%%%%%%%%%%%%%%%%%%%%%%%%%%%%%%%%%%%%%%%%%%%%%%%%%%%%%%%%
%%
&& \qquad +  \sum\limits_{u\in V} \langle w', Y^W_{VW}(
 \omega_W \left(\widetilde{v}_1, z_1 \right) \; \F (\widetilde{v}_2, z_2 ; \ldots; \widetilde{v}_{k}, z_k), \zeta_1) u \rangle 
\nn
&&
 \qquad \qquad \langle w', Y^W_{VW}( \F( \widetilde{v}_{k+1}, z_{k+1}; \ldots; \widetilde{v}_{k+n-r}, z_{k+n-r} ), \zeta_2) \overline{u} \rangle   
%\nn
\end{eqnarray*}
%%
%%%%%%%%%%%%%%%%%%%%%%%%%%%%%%%%%%%%%%%%%%%%%%%%%%%%%%%%%%%%%%%%%%%%%%%%%%
\begin{eqnarray*}
&&
\qquad +  \sum\limits_{u\in V}  \langle w', Y^W_{VW}( (-1)^{k+1}  
 \omega_W \left(\widetilde{v}_{k+1}, z_{k+1} 
  \right) \;  \F (\widetilde{v}_1, z_1 ; \ldots; \widetilde{v}_{k}, z_k), \zeta_1) u\rangle  
\nn
&& \qquad \qquad \qquad 
\langle w', Y^W_{VW}(
\F( \widetilde{v}_{k+2}, z_{k+2}; \ldots; \widetilde{v}_{k+n-r}, z_{k+n-r}), \zeta_2) \overline{u} \rangle  
%%
%\nn
\end{eqnarray*}
%%
%%%%%%%%%%%%%%%%%%%%%%%%%%%%%%%%%%%%%%%%%%%%%%%%%%%%%%%%%%%%%%%%%%%%%%%%%%
\begin{eqnarray*}
&&
\qquad - 
%%%%%%%%%%%%%%%%%%%%%%%%%%%%%%%%%%%%%%%%%%%%%%%%%%%%%5
\sum\limits_{u\in V} \langle w', (-1)^{k+1}   \langle w',  Y^W_{VW}(  
 \omega_W \left(\widetilde{v}_{k+1}, z_{k+1} 
  \right) \; \F (\widetilde{v}_1, z_1 ; \ldots; \widetilde{v}_{k}, z_k), \zeta_1) u \rangle  
\nn
&&
\qquad \qquad  \qquad 
\langle w',  Y^W_{VW}( \F(\widetilde{v}_{k+2}, z_{k+2}; \ldots; \widetilde{v}_{k+n-r}, z_{k+n-r}),  \zeta_2)  \overline{u} 
\rangle  
%%
%%%%%%%%%%%%%%%%%%%%%%%%%%%%%%%%%%%%%%%%%%%%%%%%%%%%%%%
%%
%%%%%%%%%%%%%%%%%%%%%%%%%%%%%%%%%%%%%%%%%%%%%%%%%%%%%%5
%\nn
%&&
\nn
&&
\qquad +  \sum\limits_{u\in V}  \langle w', Y^W_{VW}(  
 \F(\widetilde{v}_1, z_1; \ldots; \widetilde{v}_k, z_k), \zeta_1)  u\rangle  
\nn
&&
\qquad \qquad 
 \langle w', Y^W_{VW}(  
\omega_W(\widetilde{v}_{k+n-r+1}, z_{k+n-r+1})\; 
\nn
&&
 \qquad \qquad \qquad  \qquad  \F(\widetilde{v}_{k+1}, z_{k+1}; \ldots; \widetilde{v}_{k+n-r}, z_{k+n-r}  ), \zeta_2)  \overline{u}\rangle 
%%
%%%%
\nn
&& \qquad 
%%%%%%%%5%%%%%%%%%%%%%%%%%%%%%%%%%%%%%%%%%%%%%%%%%%%%%%%%%%%%%%%
- \sum\limits_{u\in V} \langle w', Y^W_{VW}(  
 \F(\widetilde{v}_1, z_1; \ldots; \widetilde{v}_k, z_k), \zeta_1) \rangle  
\nn
&& 
\qquad \qquad \langle w',  Y^W_{VW}(
   \omega_W(\widetilde{v}_{k+n-r+1}, z_{k+n-r+1})
\nn
&&
 \qquad \qquad \qquad  \qquad  \F( \widetilde{v}_{k+1}, z_{k+1}; \ldots; \widetilde{v}_{k+n-r}, z_{k+n-r}  ), \zeta_2) \rangle 
%%
%\nn
%&&
%\nn
%%
\end{eqnarray*}
%%
%%%%
%%%%%%%%%%%%%%%%%%%%%%%%%%%%%%%%%%%%%%%%%%%%%%%%%%%%%%%%%%%%%%%%%%%%%%%%%%
\begin{eqnarray*}
%%
%%%%%%%%%%%%%%%%%%%%%%%%%%%%%%%%%%%%%%%%%%%%%%%%%%%%%%%%%%%%%%%%%%%%%%%%%%
%%%%%%%%%%%%%%%%%%%%%%%%%%%%%%%%%%%%%%%%%%%%%%%%%%%%%%%%%%%%%%%%%%%%%%%%%%
&& \quad = 
\sum\limits_{u\in V} 
\langle w',  \; Y^W_{VW} (  
%%%%
\delta^k_m\F ( \widetilde{v}_1, z_1; \ldots;  \widetilde{v}_k, z_k), \zeta_1 ) u \rangle 
%%%%
\nn
&& 
\qquad \qquad \qquad \qquad 
\langle w', Y^W_{VW}( \F(\widetilde{v}_{k+1}, z_{k+1}; \ldots; \widetilde{v}_{k+n-r}, z_{k+n-r}), \zeta_2) 
\overline{u} \rangle  
%%%%
%%%%%%%%%%%%%%%%%%%%%%%%%%%%%%%%%%%%%%%%%%%%%%%%%%%%%%%%%%%%%%%%%%%%%%%
\nn
&& \qquad + (-1)^k 
\sum\limits_{u\in V}   \langle w',  Y^W_{VW}(
\F ( \widetilde{v}_1, z_1; \ldots; \widetilde{v}_k, z_k), \zeta_1) u \rangle   
\nn
&&
\qquad \qquad \qquad 
  \langle w',  Y^W_{VW}( \delta^{n-r}_{m'-t}  
\F(\widetilde{v}_{k+1}, z_{k+1}; \ldots;  \widetilde{v}_{k+n-r}, z_{k+n-r} ), \zeta_2 ) \overline{u} \rangle  
%%
%%
%\nn
%&&
%\nn
%%
%&& 
%%
\end{eqnarray*}
%%
%%%%%%%%%%%%%%%%%%%%%%%%%%%%%%%%%%%%%%%%%%%%%%%%%%%%%%%%%%%%%%%%%%%%%%%%%%
\begin{eqnarray*}
&& \quad = 
\langle w',   
\delta^k_m\F ( \widetilde{v}_1, z_1; \ldots;  \widetilde{v}_k, z_k) \cdot_\epsilon 
%%
%\nn
%&& 
%\qquad \qquad \qquad \qquad 
\langle w',  \F(\widetilde{v}_{k+1}, z_{k+1}; \ldots; \widetilde{v}_{k+n-r}, z_{k+n-r}) \rangle   
%%
%%%%%%%%%%%%%%%%%%%%%%%%%%%%%%%%%%%%%%%%%%%%%%%%%%%%%%%%%%%%%%%%%%%%%%%
\nn
&& \qquad + (-1)^k
%%
%\sum\limits_{u\in V}
%%
   \langle w',  
 \F ( \widetilde{v}_1, z_1; \ldots; \widetilde{v}_k, z_k) \cdot_{\epsilon}  
     \delta^{n-r}_{m'-t}  \F(\widetilde{v}_{k+1}, z_{k+1}; \ldots;  \widetilde{v}_{k+n-r}, z_{k+n-r} ) \rangle,  
\end{eqnarray*}
%%
%%%%%%%%%%%%%%%%%%%%%%%%%%%%%%%%%%%%%%%%%%%%%%%%%%%%%%%%%%%%%%%%%%%%%%%%%%%5
%%%%%%%%%%%%%%%%%%%%%%%%%%%%%%%%%%%%%%%%%%%%%%%%%%%%%%%%%%%%%%%%%%%%%%%%%%%%a
since, 
\begin{eqnarray*}
&& \sum\limits_{u\in V} \langle w', (-1)^{k+1}   Y^W_{VW}(  
 \omega_W \left(\widetilde{v}_{k+1}, z_{k+1} 
  \right) \; \F (\widetilde{v}_1, z_1 ; \ldots; \widetilde{v}_{k}, z_k), \zeta_1) u \rangle  
\nn
&&
\qquad \qquad \langle w',  Y^W_{VW}( \F(\widetilde{v}_{k+2}, z_{k+2}; \ldots; \widetilde{v}_{k+n-r}, z_{k+n-r}),  \zeta_2) \overline{u} 
\rangle  
%\nn
%&&
\end{eqnarray*}
%%%%%%%%%%%%%%%%%%%%%%%%%%%%%%%%%%%%%%%%%%%%%%%%%%%%%%%%%%%%%%%%%%%%%%%%%%%%%%%%%%%%%%
\begin{eqnarray*} 
%\nn
&&
=\sum\limits_{u\in V} \langle w', (-1)^{k+1} e^{\zeta_1 L_W{(-1)}}   Y_W(u, -\zeta_1)  \;  
 \omega_W \left(\widetilde{v}_{k+1}, z_{k+1}  
  \right) \; \F (\widetilde{v}_1, z_1 ; \ldots; \widetilde{v}_{k}, z_k) \rangle  
\nn
&&
\qquad \qquad \langle w',  Y^W_{VW}( \F(\widetilde{v}_{k+2}, z_{k+2}; \ldots; \widetilde{v}_{k+n-r}, z_{k+n-r}),  \zeta_2) \overline{u}
\rangle  
%%
%\nn
%&&
%%%%%%%%%%%%%%%%%%%%%%%%%%%%%%%%%%%%%%%%%%%%%%%%%%%%%%%%%%%%%%%%%%%%%%%%%%%%%%%%%%%%%%
%\nn
\end{eqnarray*}
%%%%%%%%%%%%%%%%%%%%%%%%%%%%%%%%%%%%%%%%%%%%%%%%%%%%%%%%%%%%%%%%%%%%%%%%%%%%%%%%%%%%%%
\begin{eqnarray*} 
&&
=\sum\limits_{u\in V} \langle w', (-1)^{k+1} e^{\zeta_1 L_W{(-1)}}  \omega_W \left(\widetilde{v}_{k+1}, z_{k+1}  \right) 
 Y_W(u, -\zeta_1)  \;  
  \; \F (\widetilde{v}_1, z_1 ; \ldots; \widetilde{v}_{k}, z_k) \rangle  
\nn
&&
\qquad \qquad \langle w',  Y^W_{VW}( \F(\widetilde{v}_{k+2}, z_{k+2}; \ldots; \widetilde{v}_{k+n-r}, z_{k+n-r}),  \zeta_2) \overline{u}
\rangle  
%%
%\nn
%&&
\end{eqnarray*}
%%%%%%%%%%%%%%%%%%%%%%%%%%%%%%%%%%%%%%%%%%%%%%%%%%%%%%%%%%%%%%%%%%%%%%%%%%%%%%%%%%%%%%
\begin{eqnarray*} 
%%%%%%%%%%%%%%%%%%%%%%%%%%%%%%%%%%%%%%%%%%%%%%%%%%%%%%%%%%%%%%%%%%%%%%%%%%%%%%%%%%%%%%
\nn
&&
=%\sum\limits_{v\in V} 
\sum\limits_{u\in V} \langle w', (-1)^{k+1} \; 
\omega_W \left(\widetilde{v}_{k+1}, z_{k+1} +\zeta_1  \right)\;  e^{\zeta_1 L_W{(-1)}}  
 Y_W(u, -\zeta_1)  \;  
  \; \F (\widetilde{v}_1, z_1 ; \ldots; \widetilde{v}_{k}, z_k) \rangle  
\nn
&&
\qquad \qquad \langle w',  Y^W_{VW}( \F(\widetilde{v}_{k+2}, z_{k+2}; \ldots; \widetilde{v}_{k+n-r}, z_{k+n-r}),  \zeta_2) \overline{u}
\rangle  
%%
%\nn
%&&
\end{eqnarray*}
%%%%%%%%%%%%%%%%%%%%%%%%%%%%%%%%%%%%%%%%%%%%%%%%%%%%%%%%%%%%%%%%%%%%%%%%%%%%%%%%%%%%%%
\begin{eqnarray*} 
%%%%%%%%%%%%%%%%%%%%%%%%%%%%%%%%%%%%%%%%%%%%%%%%%%%%%%%%%%%%%%%%%%%%%%%%%%%%%%%%%%%%%%
%\nn
&&
=\sum\limits_{v\in V} 
\sum\limits_{u\in V} 
\langle v', (-1)^{k+1} \; \omega_W \left(\widetilde{v}_{k+1}, z_{k+1}+\zeta_1  \right) w \rangle  
\nn
&&
\qquad \qquad \langle w',  e^{\zeta_1 L_W{(-1)}}  
 Y_W(u, -\zeta_1)  \;  
  \; \F (\widetilde{v}_1, z_1 ; \ldots; \widetilde{v}_{k}, z_k) \rangle  
\nn
&&
\qquad \qquad \langle w',  Y^W_{VW}( \F(\widetilde{v}_{k+2}, z_{k+2}; \ldots; \widetilde{v}_{k+n-r}, z_{k+n-r}),  \zeta_2) \overline{u}
\rangle  
%%
%\nn
%&&
\end{eqnarray*}
%%%%%%%%%%%%%%%%%%%%%%%%%%%%%%%%%%%%%%%%%%%%%%%%%%%%%%%%%%%%%%%%%%%%%%%%%%%%%%%%%%%%%%
\begin{eqnarray*} 
%%%%%%%%%%%%%%%%%%%%%%%%%%%%%%%%%%%%%%%%%%%%%%%%%%%%%%%%%%%%%%%%%%%%%%%%%%%%%%%%%%%%%%
%\nn
&&
= 
\sum\limits_{u\in V} 
%%
%\nn
%&&
%%
%\qqaud \qquad 
 \langle w',  e^{\zeta_1 L_W{(-1)}}  
 Y_W(u, -\zeta_1)  \;  
  \; \F (\widetilde{v}_1, z_1 ; \ldots; \widetilde{v}_{k}, z_k) \rangle  
\nn
&&
\qquad \qquad \sum\limits_{v\in V} \langle v', (-1)^{k+1} \; \omega_W \left(\widetilde{v}_{k+1}, z_{k+1}+\zeta_1  \right) w \rangle   
\nn
&&
\langle w',  Y^W_{VW}( \F(\widetilde{v}_{k+2}, z_{k+2}; \ldots;
\nn
&&
 \qquad \qquad \qquad \qquad \widetilde{v}_{k+n-r}, z_{k+n-r}),  \zeta_2) \overline{u}
\rangle  
%%
%\nn
%&&
\end{eqnarray*}
%%%%%%%%%%%%%%%%%%%%%%%%%%%%%%%%%%%%%%%%%%%%%%%%%%%%%%%%%%%%%%%%%%%%%%%%%%%%%%%%%%%%%%
\begin{eqnarray*} 
%%%%%%%%%%%%%%%%%%%%%%%%%%%%%%%%%%%%%%%%%%%%%%%%%%%%%%%%%%%%%%%%%%%%%%%%%%%%%%%%%%%%%%
%\nn
&&
= 
\sum\limits_{u\in V} 
%%
%\nn
%&&
%%
%\qqaud \qquad 
 \langle w',  
 Y^{W}_{VW}(  
   \F (\widetilde{v}_1, z_1 ; \ldots; \widetilde{v}_{k}, z_k) , \zeta_1) u \; \rangle  
\nn
&&
\qquad \qquad 
 \langle w', (-1)^{k+1} \; \omega_W \left(\widetilde{v}_{k+1}, z_{k+1}+\zeta_1  \right) 
\;
\nn
&&
 \qquad \qquad  Y^W_{VW}( \F(\widetilde{v}_{k+2}, z_{k+2}; \ldots; \widetilde{v}_{k+n-r}, z_{k+n-r}),  \zeta_2) \overline{u}
\rangle  
%%
%\nn
%&&
%%%%%%%%%%%%%%%%%%%%%%%%%%%%%%%%%%%%%%%%%%%%%%%%%%%%%%%%%%%%%%%%%%%%%%%%%%%%%%%%%%%%%
%\nn
\end{eqnarray*}
%%%%%%%%%%%%%%%%%%%%%%%%%%%%%%%%%%%%%%%%%%%%%%%%%%%%%%%%%%%%%%%%%%%%%%%%%%%%%%%%%%%%%%
\begin{eqnarray*} 
&&
= 
\sum\limits_{u\in V} 
%%
%\nn
%&&
%%
%\qqaud \qquad 
 \langle w',  
 Y^{W}_{VW}(  
   \F (\widetilde{v}_1, z_1 ; \ldots; \widetilde{v}_{k}, z_k) , \zeta_1) u \; \rangle  
\nn
&&
\qquad \qquad 
 \langle w', (-1)^{k+1} \; \omega_W \left(\widetilde{v}_{k+1}, z_{k+1}+\zeta_1  \right) 
\nn
&&
\qquad \qquad 
\; e^{\zeta_2 L_W{(-1)}}    Y_W(\overline{u}, -\zeta_2) \; \F(\widetilde{v}_{k+2}, z_{k+2}; \ldots; \widetilde{v}_{k+n-r}, z_{k+n-r}) 
\rangle  
%%
%\nn
%&&
\end{eqnarray*}
%%%%%%%%%%%%%%%%%%%%%%%%%%%%%%%%%%%%%%%%%%%%%%%%%%%%%%%%%%%%%%%%%%%%%%%%%%%%%%%%%%%%%%
\begin{eqnarray*} 
%%%%%%%%%%%%%%%%%%%%%%%%%%%%%%%%%%%%%%%%%%%%%%%%%%%%%%%%%%%%%%%%%%%%%%%%%%%%%%%%%%%%%%
%\nn
&&
= 
\sum\limits_{u\in V} 
%%
%\nn
%&&
%%
%\qqaud \qquad 
 \langle w',  
 Y^{W}_{VW}(  
   \F (\widetilde{v}_1, z_1 ; \ldots; \widetilde{v}_{k}, z_k) , \zeta_1) u \; \rangle  
\nn
&&
\qquad 
 \langle w', (-1)^{k+1} \; 
\; e^{\zeta_2 L_W{(-1)}} \;   Y_W(\overline{u}, -\zeta_2)
\;
\omega_W \left(\widetilde{v}_{k+1}, z_{k+1}+\zeta_1-\zeta_2 \right)  
\nn
&&
\qquad \qquad \; \F(\widetilde{v}_{k+2}, z_{k+2}; \ldots; \widetilde{v}_{k+n-r}, z_{k+n-r}) 
\rangle  
%%
%\nn
%&&
%%
\end{eqnarray*}
%%%%%%%%%%%%%%%%%%%%%%%%%%%%%%%%%%%%%%%%%%%%%%%%%%%%%%%%%%%%%%%%%%%%%%%%%%%%%%%%%%%%%%
\begin{eqnarray*} 
%%%%%%%%%%%%%%%%%%%%%%%%%%%%%%%%%%%%%%%%%%%%%%%%%%%%%%%%%%%%%%%%%%%%%%%%%%%%%%%%%%%%%%
%\nn
&& 
\qquad \qquad 
= \sum\limits_{u\in V} \langle w', Y^W_{VW}(  
 \F(\widetilde{v}_1, z_1; \ldots; \widetilde{v}_k, z_k), \zeta_1) u \rangle  
\nn
&& 
\qquad \qquad \langle w',  Y^W_{VW}(
   \omega_W(\widetilde{v}_{k+1}, z_{k+1}) \; \F( \widetilde{v}_{k+2}, z_{k+2}; \ldots; \widetilde{v}_{k+n-r}, z_{k+n-r}  ), \zeta_2) \overline{u} \rangle,   
\end{eqnarray*}
%%
%%%%%%%%%%%%%%%%%%%%%%%%%%%%%%%%%%%%%%%%%%%%%%%%%%%%%%%%%%%%%%%%%%%%%%%%%%%%%%%%%%%%%%%%%%%%
%%
due to locality \eqref{porosyataw} of vertex opertors, and arbitrarness of $\widetilde{v}_{k+1}\in V$ and $z_{k+1}$, 
we can always put
\[
\omega_W \left(\widetilde{v}_{k+1}, z_{k+1}+\zeta_1-\zeta_2 \right)  =\omega_W(\widetilde{v}_{k+2}, z_{k+2}), 
\]
for $\widetilde{v}_{k+1}=\widetilde{v}_{k+2}$, $z_{k+2}= z_{k+1}+\zeta_2-\zeta_1$.  
%%
%we find \eqref{}, 
%%
\end{proof}
%%%%%%%%%%%%%%%%%%%%%%%%%%%%%%%%%%%%%%%%%%%%%%%%%%%%%%%%%%%%%%%%%%%%%%%%%%%%%%%%%%%%%%%%%

%%%%%%%%%%%%%%%%%%%%%%%%%%%%%%%%%%%%%%%%%%%%%%%%%%%%%%%%%%%%%%%%%%%%%%%%%%%%%%%%%%%%%%%%%%
%%%%%%%%%%%%%%%%%%%%%%%%%%%%%%%%%%%%%%%%%%%%%%%%%%%%%%%%%%%%%%%%%%%%%%%%%%%%%%%%%%%%%%%%%%
%%

\end{document}